\newcommand{\comment}[1]{}
\renewcommand{\tilde}{\widetilde}
\newcommand{\Z}{\mathbb{Z}}
\newcommand{\R}{\mathbb{R}}
\DeclareMathOperator{\Inv}{\mathrm{Inv}}
\newcommand{\sm}{\mathrm{small}}
\newcommand{\bi}{\mathrm{big}}
\newcommand{\aff}{\mathrm{aff}}
\newcommand{\fin}{\mathrm{fin}}
\newcommand{\h}{\mathrm{ht}}
\newcommand{\dom}{\mathrm{dom}}
\newcommand{\tv}{\tilde{v}}
\newcommand{\tw}{\tilde{w}}
\newcommand{\tal}{\tilde{\alpha}}
\newcommand{\tbe}{\tilde{\beta}}
\newtheorem{lem}{Lemma}
\newtheorem{thm}[lem]{Theorem}
\newtheorem{prop}[lem]{Proposition}
\newtheorem{cor}[lem]{Corollary}
\theoremstyle{remark}
\newtheorem{rem}[lem]{Remark}
\newtheorem{ex}[lem]{Example}
\numberwithin{equation}{section}
\numberwithin{lem}{section}
\theoremstyle{definition}
\newtheorem{definition}{Definition}[section]
\begin{document}

\title[Double Affine Bruhat Order]{Double Affine Bruhat Order}
\author{A. Welch}
\maketitle

\begin{center}

\end{center}

We classify cocovers and covers of a given element of the double affine Weyl semigroup $W$ with respect to the Bruhat order, specifically when $W$ is associated to a finite root system that is irreducible and simply laced. We show two approaches: one extending the work of Lam and Shimozono \cite{LS}, and its strengthening by Mili\'cevi\'c \cite{Mi}, where cocovers are characterized in the affine case using the quantum Bruhat graph of $W_{\fin}$, and another, which takes a more geometrical approach by using the length difference set defined by Muthiah and Orr \cite{MO}.

\section{Introduction}

The affine Weyl group $W_{\aff}$ is a Coxeter group that can be created from the finite Weyl group $W_{\fin}$. It has many of the same properties as the finite Weyl group, and both groups have been extensively studied. This paper looks at the double affine Weyl semigroup, $W$, which is created from the affine Weyl group. In particular, we examine the Bruhat order on $W$ and classify covers and cocovers when the associated finite root system $\Phi_{\fin}$ is irreducible and simply laced.

Braverman, Kazhdan, and Patnaik introduced a pre-order on $W$ in \cite{BKP} while examining Iwahori-Hecke algebras for affine Kac-Moody groups. Given $x \in W$ and $\alpha$ a positive double affine root, they defined a preorder with generating relations: $x \geq s_{\alpha}x$ if and only if $x^{-1}(\alpha) < 0$. 

They called this preorder the Bruhat preorder and conjectured that it was an order (it was known that in the finite and affine case it is an order). In \cite{M} it was shown that the preorder is in fact an order and in \cite{MO} it was shown that the order coincides with the order generated by the relations: $x \geq xs_{\alpha}$ if and only if $\ell(x) \geq \ell( x s_{\alpha}).$

Further, Muthiah and Orr \cite{MO} related the cocover and cover relationships to a difference in lengths when the finite root system in question is irreducible and simply laced. The showed that for $\alpha$ a positive double affine root, $s_{\alpha}x$ is a cocover of $x$ if and only if $\ell(x) = \ell(s_{\alpha}x) + 1.$

This paper focuses on classifying cocovers and covers of a given element of $W$ with respect to the Bruhat order and using our classifications to better understand the Bruhat intervals.

First, we classify cocovers by extending the work done by Lam and Shimozono \cite{LS} and further strengthened by Mili\'cevi\'c \cite{Mi}, where cocovers were classified in the affine case. 

\begin{thm}(Theorem \ref{thm: 1} below)
Let $x= X^{\tv\zeta} \tw \in W$ where $\zeta$ is dominant and $\tv, \tw \in W_{\aff}$. Let $y = s_\alpha x$ where $\alpha = -\tv \tal + j \pi$ is a positive double affine root. Choose $M$ so that $\ell(\tw), \ \ell(s_{\tv \tal} \tw) \leq M$, and assume that $ \langle \zeta, \alpha_i \rangle \geq 2(M+1)$ for $i = 0, 1, \dotsc, n$. Then $y$ is a cocover of $x$ if and only if one of the following holds:

\begin{enumerate}

\item  $\ell(\tv) = \ell(\tv s_{\tal}) + 1$ and $j = 0$ so $y = X^{\tv s_{\tal}\zeta} s_{\tv \tal} \tw.$
 
\item $\ell(\tv) = \ell(\tv s_{\tal}) + 1 - \langle \tal , 2\rho \rangle$ and $j =1$ so $y = X^{\tv s_{\tal}(\zeta - \tal)} s_{\tv \tal} \tw.$

\item  $\ell(\tw^{-1}\tv s_{\tal})  =  \ell(\tw^{-1}\tv) + 1$ and $j = \langle \zeta, \tal \rangle$ so $y = X^{\tv\zeta} s_{\tv \tal} \tw.$

\item  $\ell(\tw^{-1}\tv s_{\tal}) = \ell(\tw^{-1}\tv) + 1 - \langle \tal, 2\rho \rangle$ and $j = \langle \zeta, \tal \rangle - 1$ so $y = X^{\tv (\zeta - \tal)} s_{\tv \tal} \tw.$

\end{enumerate}
\end{thm}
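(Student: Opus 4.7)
My plan is to use the Muthiah--Orr characterization cited above, which reduces the cocover relation $y \lessdot x$ to the purely numerical condition $\ell(x) = \ell(y) + 1$. Thus the argument becomes a length computation, and it splits into three stages: normalize $s_\alpha x$, invoke a stable length formula on $W$, and extract the four cases by analyzing when the length drop is exactly one.

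For the normalization, I would use the standard commutation $s_{\tv\tal} X^{\tv\zeta} = X^{\tv s_{\tal}\zeta} s_{\tv\tal}$ together with the affine-style factorization $s_{-\tv\tal + j\pi} = X^{j (\tv\tal)^\vee} s_{\tv\tal}$. After pushing this past $\tv$ and repackaging with $s_{\tal}\zeta = \zeta - \langle\zeta,\tal\rangle\tal$, the four forms of $y$ in the theorem emerge as the products obtained when $j$ is close to $0$ or to $\langle\zeta,\tal\rangle$; exactly those values of $j$ can force $s_\alpha x$ into a translation part still in the dominant cone, so that the length formula can be applied at $y$ without leaving the stable range.

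For the length computation, I would invoke (or reduce to the affine case to prove) a formula of the shape $\ell(X^{\tv\mu}\tilde{u}) = 2\langle\mu,\rho\rangle + \ell(\tv) + \ell(\tilde{u}^{-1}\tv) + (\text{constant})$ valid in a very dominant window for $\mu$. The hypothesis $\langle\zeta,\alpha_i\rangle \geq 2(M+1)$ is tailored so that each of $\zeta$, $\zeta - \tal$, $s_{\tal}\zeta$, $s_{\tal}(\zeta - \tal)$ lies in this window, allowing the formula to be applied uniformly to $x$ and to every candidate $y$. Comparing $\ell(x)$ with $\ell(y)$ in the four candidate configurations then shows the length drop equals $1$ precisely when the stated affine-Bruhat relation on $\tv$ (for cases (1), (2), with $j \in \{0,1\}$) or on $\tw^{-1}\tv$ (for cases (3), (4), with $j \in \{\langle\zeta,\tal\rangle - 1, \langle\zeta,\tal\rangle\}$) holds.

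The main obstacle I expect is ruling out all other $j$. Intermediate values give translation contributions to $\ell(x) - \ell(s_\alpha x)$ that grow linearly in $j$ or in $\langle\zeta,\tal\rangle - j$, whereas the reflection contributions on the $\tv$ or $\tw^{-1}\tv$ factor are bounded in absolute value by $M$ via the hypothesis on $\tw$. The role of the dominance bound $2(M+1)$ is precisely to guarantee that these two effects cannot combine to a drop of $1$ outside the four boundary windows. Turning this heuristic into a clean case analysis, and correctly reconciling the $1 - \langle\tal, 2\rho\rangle$ correction in cases (2) and (4) with the translation mismatch between $\tv s_{\tal}\zeta$ and $\tv s_{\tal}(\zeta - \tal)$, will be the technical heart of the proof.
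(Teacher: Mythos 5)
Your overall architecture matches the paper's: reduce to the Muthiah--Orr length criterion, rewrite $y=s_\alpha x$ as $X^{\tv s_{\tal}(\zeta-j\tal)}s_{\tv\tal}\tw = X^{\tv(\zeta-(\langle\zeta,\tal\rangle-j)\tal)}s_{\tv\tal}\tw$, apply a length formula valid for very dominant translation parts (the paper's Proposition \ref{prop: 6}, which reads $\ell(X^{\tv\zeta}\tw)=\langle\zeta,2\rho\rangle-\ell(\tw^{-1}\tv)+\ell(\tv)$; note the sign on $\ell(\tw^{-1}\tv)$ is negative, not positive as in your sketch), and then isolate the four cases. However, the step you yourself flag as the technical heart --- ruling out all $j$ other than $0,1,\langle\zeta,\tal\rangle-1,\langle\zeta,\tal\rangle$ --- is mis-specified in a way that would not close. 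You propose that the translation contribution to $\ell(x)-\ell(y)$ grows linearly in $j$ while the ``reflection contributions on the $\tv$ or $\tw^{-1}\tv$ factor are bounded in absolute value by $M$.'' Neither half of this is right as stated. For intermediate $j$ the weight $\zeta-j\tal$ leaves the dominant cone, so $\ell(X^{\tv(\zeta-j\tal)})$ is \emph{not} a linear (or even monotone) function of $j$; the paper replaces linearity by the key Lemma \ref{lem: 2}, that $f(j)=\ell(X^{\tv(\zeta-j\tal)})$ is convex, which together with the symmetry $f(j)=f(\langle\zeta,\tal\rangle-j)$ and the bounds $|\ell(x)-f(0)|\le\ell(\tw)\le M$, $|\ell(y)-f(j)|\le\ell(s_{\tv\tal}\tw)\le M$ excludes the middle range $M<j<\langle\zeta,\tal\rangle-M$. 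Your proposal contains no substitute for this convexity argument.

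Moreover, the hypothesis $\ell(\tw),\ell(s_{\tv\tal}\tw)\le M$ does \emph{not} bound $\ell(\tv)-\ell(\tv s_{\tal})$; that quantity can be as negative as $-\ell(s_{\tal})$, which is unbounded over $\tal$. So even after the middle range is excluded, you are left with $j\in[0,M]$ (or its mirror), where $\zeta-j\tal$ is dominant and the exact drop is $\ell(x)-\ell(y)=j\langle\tal,2\rho\rangle+\ell(\tv)-\ell(\tv s_{\tal})$; pinning $j$ down to $\{0,1\}$ requires the separate input $\ell(s_{\tal})\le\langle\tal,2\rho\rangle-1$ (Mare--Mihalcea, \cite[Prop 6.5]{MM}), which gives $1-j\langle\tal,2\rho\rangle=\ell(\tv)-\ell(\tv s_{\tal})\ge 1-\langle\tal,2\rho\rangle$ and hence $(1-j)\langle\tal,2\rho\rangle\ge 0$. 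Your plan treats the four boundary values of $j$ as falling out of the dominance window directly, but the window has width $M$, not width $2$; without the convexity lemma and the Mare--Mihalcea inequality the case analysis does not terminate in the four stated configurations.
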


This approach allows us to classify cocovers by using the quantum Bruhat graph of $W_{\aff}$, but in doing so, we must impose length bounds on the affine parts of the elements under consideration. 

Second, we use the length difference set defined by Muthiah and Orr \cite{MO} to better our classification by ridding ourselves of the length bounds. 

\begin{thm}(Theorem \ref{thm: 2nd} below)
Let $x = X^{\tv \zeta} \tw$ and $y = s_{\alpha}x$ where $\alpha = -\tv \tal + j \pi$ is a positive double affine root and $\langle \zeta, \alpha_i \rangle > 2$ for $i = 0, 1, \dotsc, n$. Then $y$ is a cocover of $x$ if and only if one of the following holds: 

\begin{enumerate}
\item $j = 0$ and $\ell(\tv) = \ell(\tv s_{\tal}) + 1$.
\item $j  =1$ and $\ell(\tv) = \ell(\tv s_{\tal}) + 1 - \langle \tal, 2\rho \rangle$.
\item $j = \langle \zeta, \tal \rangle$ and $\ell(\tw^{-1} \tv s_{\tal}) = \ell(\tw^{-1} \tv) + 1.$
\item $j = \langle \zeta, \tal \rangle - 1$ and $\ell(\tw^{-1} \tv s_{\tal}) = \ell(\tw^{-1} \tv) + 1 - \langle \tal, 2\rho \rangle$.
\end{enumerate}
\end{thm}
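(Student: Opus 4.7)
The plan is to reduce Theorem~\ref{thm: 2nd} to Theorem~\ref{thm: 1} via a translation argument, relying on the characterization from \cite{MO} that cocovers in the double affine Bruhat order are precisely length drops of exactly one. The driving idea is that if we translate $\zeta$ by a sufficiently dominant coweight, the bound required by Theorem~\ref{thm: 1} is satisfied, and the length differences $\ell(x) - \ell(s_\alpha x)$ should be preserved, so the cocover relations of the translated element pull back bijectively to the cocover relations we want to classify.

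More concretely, given $x = X^{\tv\zeta}\tw$ and $\alpha = -\tv\tal + j\pi$ with $\langle\zeta,\alpha_i\rangle > 2$, I would set $M = \max(\ell(\tw), \ell(s_{\tv\tal}\tw))$ and choose a dominant coweight $\eta$ with $\langle\eta,\alpha_i\rangle$ large enough that $\zeta' := \zeta + \eta$ satisfies the hypothesis of Theorem~\ref{thm: 1}. Setting $x' = X^{\tv\zeta'}\tw$ and the natural translate $\alpha' = -\tv\tal + j'\pi$ (with $j' = j$ in cases (1)--(2) and $j' = j + \langle\eta,\tal\rangle$ in cases (3)--(4)), I would establish the \emph{translation invariance}
\[
\ell(x) - \ell(s_\alpha x) \;=\; \ell(x') - \ell(s_{\alpha'} x').
\]
This would be done by expressing each side as a signed count over the length difference set $\ttS$ of \cite{MO} and exhibiting a sign-preserving bijection between these two sets induced by $\zeta \mapsto \zeta'$. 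With invariance in hand, each of the four cases of Theorem~\ref{thm: 1} applied to $x'$ maps directly onto the corresponding case for $x$, and any $j$ outside $\{0,1,\langle\zeta,\tal\rangle-1,\langle\zeta,\tal\rangle\}$ gives length difference $\neq 1$, completing the classification.

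The main obstacle is the translation invariance itself. The length of a double affine element is an infinite signed pairing against affine roots, and under $\zeta \mapsto \zeta + \eta$ the positive and negative inversion contributions of $\ttS$ must remain balanced for the bijection to succeed. The hypothesis $\langle\zeta,\alpha_i\rangle > 2$ is precisely what should keep every relevant root on the same side of the walls that govern membership in $\ttS$; the boundary cases $j = 1$ and $j = \langle\zeta,\tal\rangle - 1$, where $\alpha$ is small in exactly one of the two semi-infinite directions, will require a careful case analysis to confirm that no sign-flip, cancellation, or loss occurs at the edge of the positive root system. I expect this edge-case analysis to be the technical heart of the proof, while cases (1) and (3), which sit away from any wall, should follow almost immediately once the bijection is defined.
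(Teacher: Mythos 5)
Your proposal takes a genuinely different route from the paper: the paper does not deduce Theorem \ref{thm: 2nd} from Theorem \ref{thm: 1} at all, but instead uses the geometry of the lower graphs $\Gamma_{x,\nu}$ --- specifically, that a cocover forces $\alpha$ to be a corner, and that corners can only occur on the four inner/outer edges (Corollary \ref{corner types}) --- to restrict $j$ to the four values $0$, $1$, $\langle \zeta, \tal\rangle$, $\langle\zeta,\tal\rangle - 1$, and then computes $\ell(x)-\ell(y)$ in each case directly from Proposition \ref{prop: 6}. The advantage of that route is that no statement about how the length difference varies with $\zeta$ is ever needed.

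The gap in your reduction is the translation invariance $\ell(x)-\ell(s_\alpha x)=\ell(x')-\ell(s_{\alpha'}x')$, which you assert but do not prove, and which is needed most crucially exactly where it is least accessible. For the four boundary values of $j$ the invariance does hold, but the only way to see it is to compute both sides via Proposition \ref{prop: 6} (using that $\zeta - j\tal$, resp.\ $\zeta-(\langle\zeta,\tal\rangle-j)\tal$, is dominant and regular when $j\le 1$, resp.\ $j\ge\langle\zeta,\tal\rangle-1$, under the hypothesis $\langle\zeta,\alpha_i\rangle>2$); at that point the length difference is $j\langle\tal,2\rho\rangle+\ell(\tv)-\ell(\tv s_{\tal})$ or its analogue, manifestly independent of $\zeta$, and the detour through $x'$ and Theorem \ref{thm: 1} buys nothing. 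Where you actually need Theorem \ref{thm: 1} is to exclude the middle range $2\le j\le\langle\zeta,\tal\rangle-2$, and there neither $\zeta-j\tal$ nor its reflected counterpart need be dominant, so $\ell(X^{\tv(\zeta-j\tal)})$ carries the correction term $-\sum_{\gamma\in S(j)}\langle\zeta-j\tal,\tv^{-1}(2\gamma)\rangle$ from Lemma \ref{lem: 2}, which depends on $\zeta$ in an essential way; equivalently, the sets $L_{x,\alpha}$ and $L_{x',\alpha'}$ live against different upper edges of the graphs and no sign-preserving bijection between them is exhibited or obvious. You have therefore misplaced the technical heart: the edge cases $j=1$ and $j=\langle\zeta,\tal\rangle-1$ are routine, while the middle range is precisely the point of the theorem (it is what the strong bound $\langle\zeta,\alpha_i\rangle\ge 2(M+1)$ handled in Lemma \ref{lem: 3}, and what the corner argument replaces). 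Without an independent argument that a non-corner $\alpha$ forces $|L_{x,\alpha}|\ge 2$ --- or a genuine proof of the invariance in the middle range --- the ``only if'' direction does not close.
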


This approach allows us to show that there are finitely many cocovers and covers for a given element $x \in W$. Additionally, it allows us to prove the following corollary concerning Bruhat intervals.

\begin{cor}(Corollary \ref{fin} below)
Let $x, y \in W$ such that $y \leq x$. Then the double affine Bruhat interval $[y, x]$ will be finite. 
\end{cor}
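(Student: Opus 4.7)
The plan is to prove the corollary by strong induction on the non-negative integer $N := \ell(x) - \ell(y)$, leveraging two inputs already in place. The first is the Muthiah--Orr criterion recalled in the introduction: every cocover relation $z \lessdot w$ in $W$ satisfies $\ell(w) = \ell(z) + 1$. In particular length is strictly monotonic along saturated chains, so $N$ is a well-defined non-negative integer, and $N = 0$ forces $y = x$, giving the base case $[y,x] = \{x\}$. The second input is the finiteness-of-cocovers statement promised as a consequence of Theorem \ref{thm: 2nd}: for fixed $x$, each of the four cases parameterises a cocover $s_\alpha x$ by a finite-root datum $\tal$ (together with a specific value of $j$), and only finitely many $\tal$ can satisfy the required length identities, so the total cocover set is finite.

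For the inductive step, I would establish the decomposition
\[
[y, x] \setminus \{x\} \;=\; \bigcup_{\substack{c \lessdot x \\ c \geq y}} [y, c].
\]
The inclusion $\supseteq$ is immediate by transitivity. For $\subseteq$, given $z \in [y, x]$ with $z < x$, I would produce a cocover $c$ of $x$ with $y \leq z \leq c$ by taking any saturated chain $z = z_0 \lessdot z_1 \lessdot \cdots \lessdot z_m = x$ and setting $c = z_{m-1}$. The index set on the right is finite by the cocover-count above, and for each such $c$ we have $\ell(c) - \ell(y) = N - 1$, so by the inductive hypothesis each $[y, c]$ is finite. A finite union of finite sets is finite, completing the induction.

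The main technical point is producing the saturated chain used in the $\subseteq$ direction. A priori \cite{M} and \cite{MO} show only that the double affine Bruhat order is generated by length-changing reflection relations $u \mapsto s_\beta u$, and a single such reflection can in principle change length by more than one. One therefore needs the ``subword/refinement'' property: any step $u < v$ in the order can be refined into a sequence of cover steps of length difference exactly $\ell(v) - \ell(u)$. In the finite and affine Coxeter settings this is classical, and in the double affine setting it should follow from the Muthiah--Orr length-one characterization of cocovers together with a standard exchange-type argument; in particular no new combinatorial input beyond Theorem \ref{thm: 2nd} is needed once the refinement property is granted, and the induction above goes through routinely.
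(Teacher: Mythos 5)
Your argument is correct and is essentially the paper's own: the paper likewise bounds saturated chains by $\ell(x)-\ell(y)$, uses finiteness of covers/cocovers at each step, and notes that every element of $[y,x]$ lies on such a chain, which is exactly your inductive decomposition unrolled. The refinement property you flag as the main technical point --- that any $z$ with $y\leq z<x$ can be placed below some cocover of $x$ via a saturated chain --- is assumed without comment in the paper's proof as well, and it does follow from the Muthiah--Orr length-one characterization of covers together with strict monotonicity of $\ell$ along the order, just as you indicate.
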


\section{Background}

Let $W_{\fin}$ denote our finite Weyl group with associated root lattice $\Phi_{\fin}$ irreducible and simply laced. Let $W_{\aff}$ denote the affine Weyl group created from the semidirect product of the translation group associated to $Q = \Z\Phi_{\fin}$ with $W_{\fin}$. Because $\Phi_{\fin}$ is simply laced, we have a pairing $\langle \  , \ \rangle$ such that $\langle \alpha, \alpha \rangle = 2$ for all $\alpha \in \Phi_{\fin}$. This allows us to identify $\Phi_{\fin}$ with $\Phi_{\fin}^{\vee}$, the set of coroots.

Let $P_{\fin}$ be the finite weight lattice and $X = P_{\fin} \oplus \mathbb{Z}\delta \oplus \mathbb{Z} \Lambda_0$, the affine weight lattice. Given $\zeta = \mu + m\delta + l\Lambda_0 \in X$, we call $l$ the \textbf{level} of $\zeta$ and denote it by lev$(\zeta) = l$. 

\begin{prop}\label{prop: 4}\cite[(6.5.2)]{K}
Let $\lambda \in Q$ and $w \in W_{\fin}$. The action of $W_{\aff}$ on $X$ is defined by 
$$Y^{\lambda}w(\mu + m\delta + l \Lambda_0) = w(\mu) + l\lambda + (m - \langle w(\mu), \lambda \rangle - l \frac{\langle \lambda, \lambda \rangle}{2}) \delta + l \Lambda_0.$$
\end{prop}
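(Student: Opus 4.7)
The plan is to leverage the semidirect product decomposition $W_{\aff} \cong Q \rtimes W_{\fin}$, which writes every element uniquely as $Y^\la w$ with $\la \in Q$ and $w \in W_{\fin}$, and to compute the action of the two factors separately and then compose as $Y^\la \circ w$ on $X$. The finite Weyl part is easy: viewing $W_{\fin} \hookrightarrow W_{\aff}$ as the stabilizer of $\Lambda_0$, and noting that $\delta$ spans the radical of the standard symmetric form on the finite root span, $w$ fixes both $\delta$ and $\Lambda_0$ and acts naturally on $P_{\fin}$, giving $w(\mu + m\delta + l\Lambda_0) = w(\mu) + m\delta + l\Lambda_0$.

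The substantive step is the translation formula. I would characterize $Y^\la$ by three natural requirements: it is an isometry of the standard symmetric bilinear form on $X$, it fixes $\delta$, and it sends $\Lambda_0$ to an element of the form $\Lambda_0 + \la + c\delta$. Applying the isometry requirement to $\langle \Lambda_0, \Lambda_0\rangle = 0$, together with the pairings $\langle \Lambda_0, \delta\rangle = 1$ and $\langle \Lambda_0, P_{\fin}\rangle = 0$, forces $c = -\tfrac{1}{2}\langle \la, \la\rangle$. A parallel calculation requiring $\langle Y^\la(\mu), Y^\la(\Lambda_0)\rangle = 0$ pins down $Y^\la(\mu) = \mu - \langle \mu, \la\rangle\delta$ for $\mu \in P_{\fin}$. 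Extending linearly then yields
$$Y^\la(\mu + m\delta + l\Lambda_0) = \mu + l\la + \bigl(m - \langle \mu, \la\rangle - \tfrac{l}{2}\langle \la, \la\rangle\bigr)\delta + l\Lambda_0.$$

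Substituting $w(\mu)$ for $\mu$ in this formula (and using that $w$ preserves $m\delta + l\Lambda_0$) produces the stated identity. I would also briefly verify that the prescription is compatible with the semidirect product multiplication $Y^{\la_1}w_1 \cdot Y^{\la_2}w_2 = Y^{\la_1 + w_1(\la_2)}\, w_1 w_2$, which amounts to a routine substitution check. The main obstacle is obtaining the quadratic correction $-\tfrac{l}{2}\langle \la, \la\rangle$ correctly; this is where the simply-laced identification of $Q$ with its image in $\Phi_{\fin}^\vee$ quietly enters, and it is also where the alternative combinatorial approach — writing $Y^\la$ as a word in the affine simple reflections $s_0, \ldots, s_n$ and iterating the reflection formula — would carry the bulk of the bookkeeping.
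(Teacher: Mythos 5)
The paper does not actually prove this proposition; it imports the formula verbatim from Kac \cite[(6.5.2)]{K}, so there is no internal proof to compare against. Your derivation is correct and is essentially the standard one (it is, in outline, how Kac himself arrives at (6.5.2)): split $Y^\la w$ via the semidirect product, observe $w$ fixes $\delta$ and $\Lambda_0$, and pin down $Y^\la$ from invariance of the bilinear form. The one place your axiomatization is slightly under-specified is the step $Y^\la(\mu) = \mu - \langle \mu, \la\rangle\delta$: isometry plus $Y^\la(\delta)=\delta$ only forces $Y^\la(\mu) = \mu' + a\delta$ with $\mu'$ of level zero and $\langle \mu',\mu'\rangle = \langle\mu,\mu\rangle$, so you must also stipulate that $Y^\la$ induces the identity on the finite component (equivalently, that translations act trivially on $X$ modulo $\Q\delta \oplus \Q\Lambda_0$); with that added, the computation $\langle Y^\la(\mu), Y^\la(\Lambda_0)\rangle = \langle\mu,\la\rangle + a = 0$ does give $a = -\langle\mu,\la\rangle$ as you say. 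You are also right that the real content, if one wanted a self-contained proof, is verifying that the element $Y^\la \in W_{\aff}$ (a word in $s_0,\dotsc,s_n$) actually satisfies these characterizing properties; you defer that bookkeeping, which is acceptable here since the paper itself treats the formula as a citation. Your remark about the simply-laced identification of $Q$ with $Q^\vee$ entering through the $-\tfrac{l}{2}\langle\la,\la\rangle$ term is consistent with the paper's standing hypothesis, and the closure check against $Y^{\la_1}w_1\cdot Y^{\la_2}w_2 = Y^{\la_1+w_1(\la_2)}w_1w_2$ is exactly what is needed for the factor-by-factor definition to yield a genuine action.
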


Let $X_{\dom}$ be the set of all dominant elements of $X$. Then the \textbf{Tits cone} $\mathcal{T}$ is given by $$\mathcal{T} = \cup_{w \in W_{\aff}} w(X_{\dom}).$$

In this definition, we see that the Tits cone is the subset of $X$ containing all elements that can be made dominant by some element of $W_{\aff}$.

Alternatively, we can view the Tits cone as a union of two sets, with one set containing elements of level zero and the other containing the elements with positive level.

\begin{prop}\cite[Prop 5.8(b)]{K}
$$\mathcal{T} = \{m\delta : m \in \mathbb{Z}\} \cup \{ \mu + m\delta + l \Lambda_0 : \mu \in P,  \ m \in \mathbb{Z}, \  l \in \mathbb{Z}_{>0}\}.$$
\end{prop}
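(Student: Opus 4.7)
The plan is to establish both inclusions separately, using the explicit $W_{\aff}$-action formula from Proposition \ref{prop: 4}; the key observation throughout is that this action preserves the level (the coefficient of $\Lambda_0$).

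For the forward inclusion, start from $\zeta = w(\zeta_0)$ with $\zeta_0 = \mu + m\delta + l\Lambda_0 \in X_{\dom}$; then $\zeta$ has level $l$. Dominance requires $\langle \zeta_0, \alpha_i\rangle \geq 0$ for $i = 0, 1, \dotsc, n$: for $i \geq 1$ this is finite dominance of $\mu$, while for $i = 0$ (with $\alpha_0 = \delta - \theta$ and $\theta$ the highest finite root) it yields $l \geq \langle \mu, \theta\rangle$. In the irreducible simply-laced case, $\theta$ is a positive integer combination of the simple finite roots, so finite dominance of $\mu$ forces $\langle \mu, \theta\rangle \geq 0$, hence $l \geq 0$. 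If $l = 0$, then $\langle \mu, \theta\rangle = 0$ and positivity of the coefficients of $\theta$ forces $\langle \mu, \alpha_i\rangle = 0$ for each $i$, so $\mu = 0$; the formula of Proposition \ref{prop: 4} then gives $Y^\lambda u(m\delta) = m\delta$, hence $\zeta = m\delta$. If $l > 0$, reading off the formula shows $\zeta$ has the required form $\mu' + m'\delta + l\Lambda_0$ with $\mu' \in P_{\fin}$ (using $W_{\fin}(P_{\fin}) = P_{\fin}$ and $Q \subseteq P_{\fin}$) and $m' \in \Z$ (using $\langle \lambda, \lambda\rangle \in 2\Z$ for $\lambda \in Q$ in the simply-laced case).

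For the reverse inclusion, each $m\delta$ is dominant (its pairings with the simple affine roots all vanish), so $m\delta \in \mathcal{T}$. For $\zeta = \mu + m\delta + l\Lambda_0$ with $l > 0$, I iterate the following procedure: (i) apply $u \in W_{\fin}$ to make the finite part finite-dominant; (ii) if $\langle \mu, \theta\rangle > l$, apply $Y^{-\theta}$ (which sends $\mu \mapsto \mu - l\theta$) and return to (i); otherwise stop. In the simply-laced case, $\langle \theta, \theta\rangle = 2$ gives $\langle \mu - l\theta, \mu - l\theta\rangle - \langle \mu, \mu\rangle = 2l(l - \langle \mu, \theta\rangle)$, which is strictly negative exactly when step (ii) is invoked. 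Since $\langle \cdot, \cdot\rangle$ is $W_{\fin}$-invariant, the non-negative quantity $\langle \mu, \mu\rangle$ strictly decreases with each iteration; on the lattice $P_{\fin}$ this quantity takes only finitely many values below any fixed bound, so the procedure terminates at an affine-dominant element, witnessing $\zeta \in \mathcal{T}$.

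The main obstacle is this termination argument: the chosen invariant must be $W_{\fin}$-invariant (so as to survive the re-dominization in step (i)) and must also strictly decrease under the $Y^{-\theta}$ translation whenever step (ii) is invoked. The squared norm $\langle \mu, \mu\rangle$ balances these two requirements cleanly, and it is precisely the simply-laced hypothesis (via $\langle \theta, \theta\rangle = 2$) that makes the sign of the discrepancy $2l(l - \langle \mu, \theta\rangle)$ match the triggering condition for step (ii).
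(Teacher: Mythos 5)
The paper offers no proof of this proposition: it is quoted verbatim from \cite[Prop 5.8(b)]{K}, so there is no internal argument to compare against, and your blind proof is the only one on the table. It is correct and self-contained. The forward inclusion rightly reduces to three facts: the action in Proposition \ref{prop: 4} preserves the coefficient of $\Lambda_0$; dominance of $\mu + m\delta + l\Lambda_0$ gives $l \geq \langle \mu, \theta\rangle \geq 0$; and in the boundary case $l=0$, irreducibility (all coefficients of $\theta$ strictly positive) forces $\mu = 0$, so the element is $m\delta$ and is fixed by all of $W_{\aff}$. The reverse inclusion's descent --- alternately dominantizing the finite part by $W_{\fin}$ and applying $Y^{-\theta}$ whenever $\langle \mu, \theta\rangle > l$ --- terminates because the $W_{\fin}$-invariant potential $\langle \mu, \mu\rangle$ drops by $2l(\langle \mu,\theta\rangle - l) > 0$ at each translation step and takes only finitely many values below any bound on the lattice $P_{\fin}$; since $\langle\mu,\theta\rangle \in \Z$, the decrement is in fact at least $2l \geq 2$, which gives termination even more directly. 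This is a more elementary and more hands-on route than the cited source, which deduces the statement from the general description of the Tits cone of a Kac--Moody algebra (an element lies in the cone iff it pairs negatively with only finitely many positive roots); your version uses only the explicit lattice action and the evenness of $Q$ in the simply-laced case, which fits the setting the paper actually works in. Two cosmetic remarks: simple-lacedness is not needed for $\langle \mu, \theta\rangle \geq 0$ (irreducibility suffices), and you should note explicitly that $\langle w(\mu), \lambda\rangle \in \Z$ for $\mu \in P_{\fin}$, $\lambda \in Q$ (via the identification of $Q$ with $Q^{\vee}$) when checking that the $\delta$-coefficient stays integral.
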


Note that $\mathcal{T}$ contains all the imaginary roots (roots of the form $m\delta$) and all the roots with $l > 0$, but it contains no elements with a negative level.

We define the \textbf{double affine Weyl semigroup} $W$ to be the semidirect product of the the translation semigroup associated to $\mathcal{T}$ with $W_{\aff}$:
\begin{align*}
W &= \mathcal{T} \rtimes W_{\aff}\\
&= \{X^{\zeta} \tw : \zeta \in \mathcal{T}, \tw \in W_{\aff} \}\\
&=\{X^{\zeta} Y^{\lambda} w : \zeta \in \mathcal{T}, \lambda \in Q, w \in W_{\mathrm{fin}} \}.
\end{align*}

\begin{rem}
This is a semigroup, but not a group, as it is not closed under inverses.
\end{rem}

For simplicity, we will use $\textrm{lev}(x)$  to denote the level of the $X$-weight of $x \in W$ (i.e. if $x = X^\zeta Y^\lambda w \in W$ then $\textrm{lev}(x) = \textrm{lev}(\zeta)$).


\subsection{Roots and Reflections}

Define $Q_{\textrm{daff}} = \mathbb{Z}\Phi_{\fin} \oplus \mathbb{Z}\delta \oplus \Z \pi$. The set of \textbf{double affine roots} is given by
$$\Phi = \{ \tal + j \pi \in Q_{\textrm{daff}}: \tal \in \Phi_{\aff}, j \in \mathbb{Z}\} = \{ \nu + r\delta + j \pi : \nu \in \Phi_{\fin}, r, j \in \mathbb{Z} \}.$$

Let $\tal = \nu + r\delta$ be an affine root. We say that a double affine root $\alpha = \tal + j \pi$ is \textbf{positive} if $\tal > 0$ and $j \geq 0$ or $\tal < 0$ and $j > 0$. Similarly, we say that a double affine root $\alpha = \tal + j \pi$ is \textbf{negative} if $\tal < 0$ and $j \leq 0$ or $\tal > 0$ and $j < 0$. For our purposes, we will consider $\pi$ to be a placeholder like $\delta$ for the affine root. 

Each double affine root $\alpha = \nu + r\delta + j\pi$, can be associated to a reflection $s_{\alpha}$. Let $\tal = \nu + r\delta$. Then define:
\begin{align*}
s_{\alpha}  &= s_{\tal + j\pi}\\
&= X^{-j \tal} s_{\nu + r\delta}\\
&= X^{-j \tal} Y^{-r \nu} s_{\nu}.
\end{align*}


\begin{rem}
If $\alpha = \nu + r\delta + j\pi$ is a double affine root, and $j \neq 0$, then  $s_{\alpha}$ is not an element of $W$. Instead, $s_{\alpha}$ is an element of $X \rtimes W_{\aff}$, which contains $W$ as a sub-semi-group.

Consider
$$s_{\nu + r\delta + j\pi} = X^{-j(\nu + r \delta)} Y^{-r\nu} s_{\nu}$$

\noindent with $j \neq 0$. Then $s_{\nu + r\delta + j\pi}$ is not an element of $W$ because $-j(\nu + r d)$ is not in $\mathcal{T}$; however, when we consider $x = X^{\zeta}\tw \in W $with $\textrm{lev}(x) > 0,$ $xs_{\nu + r\delta + j\pi}$ is an element of the double affine Weyl semigroup. 

\end{rem}

\begin{rem}
The semigroup $W$ is not generated by reflections.

Consider $x = X^{\mu + m\delta + l\Lambda_0} \in W$ with lev$(x) > 0$. Then $x$ cannot be written as a product of reflections because the reflections contain no $X^{l \Lambda_0}$ part.
\end{rem}

\begin{prop}\label{action} Let $\zeta \in X$ and $\tw \in W_{\aff}$. $X \rtimes W_{\aff}$ acts on $\Phi$ by 
$$X^{\zeta} \tw (\tal + j \pi) = \tw(\tal) + (j - \langle \zeta, \tw(\tal) \rangle) \pi.$$

This is similar to the action defined for $W_{\aff}$ on $\Phi_{\aff}$. Letting $\zeta = \mu + m \delta + l\Lambda_0$ and $\tw = Y^{\lambda} w,$ we can expand this to
\begin{align*}
X^{\zeta} Y^{\lambda} w & (\alpha +  r\delta + j \pi) =  Y^{\lambda} w (\alpha + r\delta) + (j - \langle \zeta, Y^{\lambda} w (\alpha + r\delta) \rangle ) \pi\\
= & \ Y^{\lambda} w (\alpha + r\delta) + (j - \langle\mu + m \delta + l \Lambda_0, Y^{\lambda} w (\alpha + r\delta) \rangle ) \pi \\
= & \ w(\alpha) + (r - \langle \lambda, w(\alpha) \rangle ) \delta + (j - \langle \mu + m \delta + l \Lambda_0, w(\alpha) + (r - \langle \lambda, w( \alpha) \rangle ) \delta \rangle ) \pi \\
= &\  w(\alpha) + (r - \langle \lambda, w(\alpha) \rangle ) \delta + (j - \langle \mu, w(\alpha) \rangle - l(r - \langle \lambda, w( \alpha) \rangle )) \pi \\
= & \ Y^{\lambda} w (\alpha + r\delta) + (j - \langle \mu, w(\alpha) \rangle - l(r - \langle \lambda, w( \alpha) \rangle )) \pi.
\end{align*}
\end{prop}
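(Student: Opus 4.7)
The plan is to establish the proposition in two stages: first the compact formula, then the expansion. For the compact formula, I would derive it from the natural conjugation action of $X \rtimes W_{\aff}$ on its reflections. Specifically, each double affine root $\alpha = \tal + j\pi$ corresponds to the reflection $s_{\alpha} = X^{-j\tal}s_{\tal}$ as defined earlier, so the action on $\Phi$ is characterized by the identity $g s_\alpha g^{-1} = s_{g \cdot \alpha}$. Computing the conjugation $X^{\zeta}\tw \cdot (X^{-j\tal} s_{\tal}) \cdot (X^{\zeta}\tw)^{-1}$ via the semidirect product law, using $(X^{\zeta}\tw)^{-1} = X^{-\tw^{-1}(\zeta)}\tw^{-1}$ and $\tw s_{\tal}\tw^{-1} = s_{\tw(\tal)}$, together with the reflection identity $\zeta - s_{\tw(\tal)}(\zeta) = \langle \zeta, \tw(\tal)\rangle\,\tw(\tal)$ in the normalization $\langle\beta,\beta\rangle=2$, collapses the conjugate to $X^{(\langle \zeta, \tw(\tal)\rangle - j)\tw(\tal)} s_{\tw(\tal)}$, which is precisely $s_{\tw(\tal) + (j - \langle \zeta, \tw(\tal)\rangle)\pi}$. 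This gives the advertised formula and simultaneously shows it defines a bona fide action, since conjugation in $X \rtimes W_{\aff}$ is already an action.

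For the expansion I would substitute $\zeta = \mu + m\delta + l\Lambda_0$, $\tw = Y^\lambda w$, and $\tal = \alpha + r\delta$. The image $Y^\lambda w(\alpha + r\delta) = w(\alpha) + (r - \langle \lambda, w(\alpha)\rangle)\delta$ is the standard $W_{\aff}$-action on $\Phi_{\aff}$, obtained as a level-zero specialization of Proposition~\ref{prop: 4}. Then the pairing
$$\langle \mu + m\delta + l\Lambda_0,\ w(\alpha) + (r - \langle\lambda, w(\alpha)\rangle)\delta\rangle$$
collapses using the standard values $\langle \delta, P_{\fin}\rangle = 0$, $\langle \delta,\delta\rangle = 0$, $\langle \Lambda_0, P_{\fin}\rangle = 0$, and $\langle \Lambda_0,\delta\rangle = 1$ on the affine weight lattice to yield $\langle \mu, w(\alpha)\rangle + l(r - \langle \lambda, w(\alpha)\rangle)$. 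Substituting this back into the compact formula produces the four displayed intermediate equalities of the proposition without further input.

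The main obstacle is the bookkeeping in Stage 1: tracking how the $X$-parts combine under the semidirect product and collecting them correctly via the reflection identity to recognize the output as a single reflection $s_\beta$ for the claimed $\beta$. Once this is done, Stage 2 is essentially linear algebra using how the symmetric bilinear form extends from $\Phi_{\fin}$ to the affine weight lattice, and no tools beyond Proposition~\ref{prop: 4} and the known pairings involving $\delta$ and $\Lambda_0$ are needed. An alternative approach would bypass reflections and verify the action axioms directly — trivial identity check, and compatibility with composition reducing to the $W_{\aff}$-invariance $\langle \tw_1(\zeta_2), (\tw_1\tw_2)(\tal)\rangle = \langle \zeta_2, \tw_2(\tal)\rangle$ — but the conjugation route has the advantage of explaining \emph{why} the formula must take this shape.
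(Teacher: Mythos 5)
Your proposal is correct, and it actually does more than the paper, which offers no separate proof of this proposition: the displayed chain of equalities in the statement \emph{is} the paper's entire verification, i.e.\ the compact formula is simply posited (by analogy with the $W_{\aff}$-action on $\Phi_{\aff}$) and then expanded using $Y^{\lambda}w(\alpha+r\delta) = w(\alpha) + (r - \langle\lambda, w(\alpha)\rangle)\delta$ and the standard pairings $\langle\delta,\cdot\rangle$, $\langle\Lambda_0,\delta\rangle = 1$ on the affine weight lattice. Your Stage 2 reproduces exactly that computation. Your Stage 1 is the genuinely different (and welcome) addition: deriving the formula from the conjugation identity $g s_{\alpha} g^{-1} = s_{g\cdot\alpha}$, using $\tw s_{\tal}\tw^{-1} = s_{\tw(\tal)}$ and $s_{\tw(\tal)}(\zeta) = \zeta - \langle\zeta,\tw(\tal)\rangle\,\tw(\tal)$ to collapse the conjugate to $X^{(\langle\zeta,\tw(\tal)\rangle - j)\tw(\tal)}s_{\tw(\tal)} = s_{\tw(\tal)+(j-\langle\zeta,\tw(\tal)\rangle)\pi}$; I checked the bookkeeping and it is right. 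This buys you both an explanation of why the formula must take this shape and a free verification of the action axioms, which the paper leaves implicit; what the paper's terser route buys is brevity and independence from the reflection formalism (the conjugation argument needs $X\rtimes W_{\aff}$ to be an honest group acting on $X$, which it is, even though $W$ itself is only a semigroup). Either version is acceptable; yours is the more self-contained.
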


\begin{prop}
Let $\alpha$ and $\beta$ be double affine roots. Then $s_{\alpha}(\beta)$ as defined in Proposition \ref{action} is the same as 
$$s_{\alpha}(\beta) = \beta - \langle \alpha, \beta \rangle \alpha.$$
\end{prop}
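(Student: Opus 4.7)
The plan is a direct computation comparing the two sides. Write $\alpha = \nu + r\delta + j\pi$ with $\tal = \nu + r\delta$, and write $\beta = \nu' + r'\delta + j'\pi$. By definition $s_\alpha = X^{-j\tal}Y^{-r\nu}s_\nu$, so in the notation of Proposition~\ref{action} we substitute $\mu = -j\nu$, $m = -jr$, $l = 0$, $\lambda = -r\nu$, and $w = s_\nu$. Plugging into the displayed formula of Proposition~\ref{action} gives
\[
s_\alpha(\beta) = s_\nu(\nu') + \bigl(r' + r\langle \nu, s_\nu(\nu')\rangle\bigr)\delta + \bigl(j' + j\langle \nu, s_\nu(\nu')\rangle\bigr)\pi,
\]
where the term involving $l$ vanishes because $l = 0$.

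Next I would simplify using the simply-laced identity. Since $\langle \nu, \nu\rangle = 2$, the computation
\[
\langle \nu, s_\nu(\nu')\rangle = \langle \nu, \nu' - \langle \nu,\nu'\rangle\nu\rangle = \langle \nu, \nu'\rangle - 2\langle \nu, \nu'\rangle = -\langle \nu, \nu'\rangle
\]
converts the previous formula into
\[
s_\alpha(\beta) = s_\nu(\nu') + \bigl(r' - r\langle \nu, \nu'\rangle\bigr)\delta + \bigl(j' - j\langle \nu, \nu'\rangle\bigr)\pi.
\]
For the right-hand side of the claim, I would extend the finite pairing to $Q_{\textrm{daff}}$ by declaring $\delta$ and $\pi$ to lie in the radical, so that $\langle \alpha,\beta\rangle = \langle \nu,\nu'\rangle$. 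Then
\[
\beta - \langle \alpha,\beta\rangle\alpha = \bigl(\nu' - \langle \nu,\nu'\rangle\nu\bigr) + \bigl(r' - r\langle \nu,\nu'\rangle\bigr)\delta + \bigl(j' - j\langle \nu,\nu'\rangle\bigr)\pi,
\]
and the finite component is precisely $s_\nu(\nu')$ by the reflection formula in $\Phi_{\fin}$. Matching the three coordinates finishes the proof.

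There is no real obstacle here; the statement is a mechanical unpacking of the two definitions. The only points that require care are (i) identifying the bilinear form on the double affine root lattice that makes the reflection formula agree with the action, namely the one with $\delta$ and $\pi$ in the radical, and (ii) invoking the simply-laced normalization $\langle \nu, \nu\rangle = 2$ at the right moment so that $\langle \nu, s_\nu(\nu')\rangle = -\langle \nu, \nu'\rangle$, which is what converts the signs coming out of Proposition~\ref{action} into the form $\beta - \langle \alpha,\beta\rangle\alpha$.
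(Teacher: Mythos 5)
Your proposal is correct and follows essentially the same route as the paper: both expand $s_\alpha = X^{-j\tal}Y^{-r\nu}s_\nu$ via the action formula of Proposition \ref{action}, use $\langle \nu, s_\nu(\nu')\rangle = -\langle\nu,\nu'\rangle$ (from $\langle\nu,\nu\rangle=2$), and regroup the three coordinates into $\beta - \langle\nu,\nu'\rangle\alpha$. Your explicit remark that the pairing on $Q_{\textrm{daff}}$ must place $\delta$ and $\pi$ in the radical is a point the paper leaves implicit, but it is the same computation.
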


\begin{proof}
When we expand the action defined in Proposition \ref{action}, we can see that the two actions are the same:
\begin{align*}
s_{\alpha}(\beta) & = X^{-j(\nu + r\delta)}Y^{-r\nu}s_{\nu}(\gamma + p\delta + q \pi)\\
& = s_{\nu}(\gamma) + (p + \langle r\nu, s_{\nu}(\gamma) \rangle)\delta + (q + \langle j(\nu + r\delta), s_{\nu}(\gamma) + (p + \langle r\nu, s_{\nu}(\gamma) \rangle)\delta \rangle)\pi\\
& = s_{\nu}(\gamma) + (p - r\langle \nu, \gamma \rangle)\delta + (q + j\langle \nu + r\delta, s_\nu(\gamma) \rangle)\pi \\
& = (\gamma - \langle \nu, \gamma \rangle \nu) + (p- r\langle \nu, \gamma \rangle)\delta + (q - j \langle \nu, \gamma \rangle)\pi\\
& = \gamma + p\delta + q\pi - \langle \nu, \gamma \rangle (\nu + r\delta + j\pi)\\
& = \beta- \langle \alpha, \beta \rangle \alpha
\end{align*}
\end{proof}


\subsection{Length Function}

When considering elements of $W$, it no longer makes sense to define a length function based on reduced words (as is done for $W_{\fin}$ or $W_{\aff}$) because not every element of $w$ can be expressed as a product of simple reflections. Instead, we use the length function defined in \cite{M}. 

Let $\rho$ be the sum of the affine fundamental weights (which we choose now and keep consistent throughout the paper).

Let $x = X^{\zeta}\tw$ be an element of $W$. Then the \textbf{length} of $x$ is defined by \cite{M} to be
$$\ell(x) = \langle \zeta_{+}, 2\rho \rangle + |\{ \tal \in \Inv(\tw^{-1}) : \langle \zeta, \tal \rangle \leq 0 \}| - |\{ \tal \in \Inv(\tw^{-1}) : \langle \zeta, \tal \rangle > 0 \}|,$$

\noindent where $\zeta_{+}$ is the dominant element associated to $\zeta$ and $\tal = \nu + r\delta$ is an affine root. We break this into big and small parts by defining the \textbf{big length} as 
$$\ell_{\bi}(x) = \langle \zeta_{+}, 2\rho \rangle$$ 

\noindent and the \textbf{small length} as
$$\ell_{\sm}(x) =  |\{ \tal \in \Inv(\tw^{-1}) : \langle \zeta, \tal \rangle \leq 0 \}| - |\{ \tal \in \Inv(\tw^{-1}) : \langle \zeta, \tal \rangle > 0 \}|.$$

From our definition of $\ell$, we can see why we must use $\mathcal{T}$ and not all of $X$ when defining $W$. Recall that $\mathcal{T}$ contains all elements of $X$ that can be made dominant. We need the $X$-weight of $x \in W$ to be made dominant when calculating the length since we use $\ell_{\bi}(X^{\zeta}\tw)  = \langle \zeta_{+} , 2\rho \rangle,$ where $\zeta_{+}$ is the dominant element of $X$ associated to $\zeta$. 

\begin{rem} 
For $\tw = Y^{\lambda} w \in W$, $\ell(\tw) = \ell(X^{0} \tw) = \ell_{\aff} (\tw),$ where $\ell_{\aff}$ is the Coxeter length function on $W_{\aff}$. 
\end{rem}

Before ending our discussion of the length function, we need a proposition that splits the length of an element $x \in W$ into the sum of two lengths, the first considering only the translation part of $x$ and the second considering only the affine part of $x$. This way of re-writing the length function will be fundamental when proving our classification theorems. 

\begin{prop}\label{prop: 6}
Let $\zeta \in \mathcal{T}$ be regular and dominant and let $x = X^{\tv \zeta}\tw$ where $\tw, \tv \in W_{\aff}$. Then 
\begin{align*}
\ell(x) & = \ell(X^\zeta) - \ell(\tv^{-1}\tw) + \ell(\tv)\\
&= \langle \zeta, 2\rho \rangle - \ell(\tw^{-1}\tv) + \ell(\tv).
\end{align*}
\end{prop}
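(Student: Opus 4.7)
The plan is to unpack the definition of $\ell(x)$ and use the regularity of $\zeta$ to translate the pairing conditions in $\ell_{\sm}$ into sign conditions on affine roots, at which point a standard Coxeter-group identity finishes the job.

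First I would handle the big part. Since $\tv \in W_{\aff}$ and $\zeta$ is dominant, the $X$-weight of $x$ satisfies $(\tv\zeta)_+ = \zeta$, so
\begin{equation*}
\ell_{\bi}(x) = \langle \zeta, 2\rho \rangle = \ell(X^{\zeta}).
\end{equation*}
It remains to show that $\ell_{\sm}(x) = \ell(\tv) - \ell(\tw^{-1}\tv)$, from which the two forms in the statement follow using $\ell(\tw^{-1}\tv) = \ell(\tv^{-1}\tw)$.

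Next, by Proposition \ref{action}, $\langle \tv\zeta, \tal \rangle = \langle \zeta, \tv^{-1}\tal \rangle$ for any real affine root $\tal$. Because $\zeta$ is regular and dominant (of positive level), $\langle \zeta, \tbe \rangle > 0$ for every positive real affine root $\tbe$ and $\langle \zeta, \tbe \rangle < 0$ for every negative one, with equality never occurring. So the conditions $\langle \tv\zeta, \tal \rangle \leq 0$ and $\langle \tv\zeta, \tal \rangle > 0$ in the definition of $\ell_{\sm}$ become the clean conditions $\tv^{-1}\tal < 0$ and $\tv^{-1}\tal > 0$, i.e.\ membership or non-membership in $\Inv(\tv^{-1})$. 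Setting $B = \Inv(\tw^{-1}) \cap \Inv(\tv^{-1})$ and using $|\Inv(\tw^{-1})| = \ell(\tw)$, we obtain
\begin{equation*}
\ell_{\sm}(x) = |B| - \bigl(\ell(\tw) - |B|\bigr) = 2|B| - \ell(\tw).
\end{equation*}

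Finally, the standard Coxeter-group length formula
\begin{equation*}
\ell(uv) = \ell(u) + \ell(v) - 2\,|\Inv(u) \cap \Inv(v^{-1})|,
\end{equation*}
applied with $u = \tw^{-1}$ and $v = \tv$, gives $\ell(\tw^{-1}\tv) = \ell(\tw) + \ell(\tv) - 2|B|$, so $2|B| = \ell(\tw) + \ell(\tv) - \ell(\tw^{-1}\tv)$, and substituting yields $\ell_{\sm}(x) = \ell(\tv) - \ell(\tw^{-1}\tv)$ as required. The main obstacle is really the sign reduction in the previous paragraph: one must be careful that regularity of $\zeta$ excludes any real affine root from lying on the vanishing locus of the pairing, so that $\Inv(\tw^{-1})$ partitions cleanly into $B$ and its complement. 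After that, everything is bookkeeping against a well-known Coxeter identity applied in $W_{\aff}$.
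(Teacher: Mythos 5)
Your proposal is correct and follows essentially the same route as the paper: identify $\ell_{\bi}(x)=\langle\zeta,2\rho\rangle$, use regularity and dominance of $\zeta$ to convert the pairing conditions in $\ell_{\sm}$ into membership in $\Inv(\tv^{-1})$, and finish with the identity $\ell(\tw^{-1}\tv)=\ell(\tw^{-1})+\ell(\tv)-2|\Inv(\tw^{-1})\cap\Inv(\tv^{-1})|$, which the paper proves as Lemma \ref{aff_len_sec} rather than citing as standard. The only cosmetic difference is that you invoke this Coxeter identity as known, whereas the paper derives it from scratch via Lemma \ref{aff_len}.
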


Before we can prove Proposition \ref{prop: 6}, we need the following lemmas.

\begin{lem}\label{aff_len}
Let $x, y \in W_\aff$. Then 
\begin{align*}
\ell(xy) &= \ell(x) + \ell(y) - 2|\{\Inv(y) \cap -y^{-1} \Inv(x) \}|\\
& = \ell(x) + \ell(y) - 2|\{ \alpha \in \Inv(y) : \alpha \notin \Inv(xy) \}|.
\end{align*}
\end{lem}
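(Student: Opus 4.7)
The plan is to use the identity $\ell(w) = |\Inv(w)|$ for $w \in W_{\aff}$, where $\Inv(w) = \{\tal \in \Phi_{\aff}^+ : w(\tal) < 0\}$, and compute $\Inv(xy)$ directly in terms of $\Inv(x)$ and $\Inv(y)$. For a fixed $\tal \in \Phi_{\aff}^+$ I would split into cases on the sign of $y(\tal)$. If $\tal \notin \Inv(y)$, so $y(\tal) > 0$, then $\tal \in \Inv(xy)$ iff $y(\tal) \in \Inv(x)$, i.e., iff $\tal \in y^{-1}\Inv(x) \cap \Phi_{\aff}^+$. If $\tal \in \Inv(y)$, so $y(\tal) < 0$, then $xy(\tal) = -x(-y(\tal))$ is negative iff $-y(\tal) \notin \Inv(x)$, i.e., iff $\tal \notin -y^{-1}\Inv(x)$. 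Hence
\[
\Inv(xy) = \bigl(y^{-1}\Inv(x) \cap \Phi_{\aff}^+\bigr) \sqcup \bigl(\Inv(y) \setminus (-y^{-1}\Inv(x))\bigr).
\]

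Next I would count the two pieces. The second piece has cardinality $\ell(y) - |\Inv(y) \cap (-y^{-1}\Inv(x))|$. For the first piece, since $y^{-1}$ permutes $\Phi_{\aff}$, $|y^{-1}\Inv(x)| = \ell(x)$, and the map $\tal \mapsto -\tal$ bijects $y^{-1}\Inv(x) \cap \Phi_{\aff}^-$ with $(-y^{-1}\Inv(x)) \cap \Phi_{\aff}^+$. A quick check shows the latter set is contained in $\Inv(y)$: if $\tal > 0$ and $-y(\tal) \in \Inv(x) \subset \Phi_{\aff}^+$, then $y(\tal) < 0$, so $\tal \in \Inv(y)$. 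Thus $|y^{-1}\Inv(x) \cap \Phi_{\aff}^+| = \ell(x) - |\Inv(y) \cap (-y^{-1}\Inv(x))|$, and adding the two pieces gives the first displayed equality of the lemma. The second displayed equality is immediate from Case 2, which showed $\{\tal \in \Inv(y) : \tal \notin \Inv(xy)\} = \Inv(y) \cap (-y^{-1}\Inv(x))$.

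The argument is essentially a sign-bookkeeping exercise; there is no substantive obstacle. The only slightly delicate point is the containment $(-y^{-1}\Inv(x)) \cap \Phi_{\aff}^+ \subseteq \Inv(y)$ that lets one rewrite the count of the first piece in the convenient form $\ell(x) - |\Inv(y) \cap (-y^{-1}\Inv(x))|$. Everything else is a direct unpacking of the definition of $\Inv$ together with the fundamental fact $\ell = |\Inv|$ in an affine Coxeter group.
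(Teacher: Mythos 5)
Your proposal is correct and follows essentially the same route as the paper: both arguments rest on $\ell(w)=|\Inv(w)|$, split roots according to the sign of $y(\tal)$, and identify the doubly-removed set as $\Inv(y)\cap(-y^{-1}\Inv(x))$. Your version packages the computation as an exact disjoint decomposition of $\Inv(xy)$ rather than the paper's inclusion-plus-correction bookkeeping, but the content is identical and your handling of the one delicate containment $(-y^{-1}\Inv(x))\cap\Phi_{\aff}^{+}\subseteq\Inv(y)$ is sound.
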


\begin{proof}
Let $\alpha \in \Inv(xy)$. There are two possibilities:

\begin{enumerate}
\item  $\alpha > 0, \ y(\alpha) < 0,$ and $xy(\alpha) < 0$ 
\item $\alpha > 0, \ y(\alpha) > 0,$ and $xy(\alpha) < 0$. 
\end{enumerate}

So $\Inv(xy) \subset y^{-1}\Inv(x) \ \sqcup \  \Inv(y)$ (this is a disjoint union because if $\alpha \in y^{-1}\Inv(x)$, then $y(\alpha) > 0$ and so $\alpha \notin \Inv(y)$). In general, this is a proper subset because there could be $\alpha \in \Inv(y)$ such that $-y(\alpha) \in \Inv(x)$ (so $\alpha \notin \Inv(xy)$), or there could be $\alpha < 0$ such that $y(\alpha) \in \Inv(x)$ (so $\alpha \in y^{-1}\Inv(x)$ but $\alpha \notin \Inv(xy)$).

So $|\Inv(xy)| \leq |y^{-1}\Inv(x)| + |\Inv(y)|$, and to find an exact representation of $|\Inv(xy)|$, we must subtract $|\{\alpha \in \Inv(y) : -y(\alpha) \in \Inv(x) \}| = |\Inv(y) \cap -y^{-1}\Inv(x)|$ and $|\{\alpha < 0 : y(\alpha) \in \Inv(x) \} | = |\{\beta > 0 : -y(\beta) \in \Inv(x)\}| = |\Inv(y) \cap -y^{-1}\Inv(x)|$.

Using $|\Inv(x)| = |y^{-1} \Inv(x)|$, we have $|\Inv(xy)| = |\Inv(x)| + |\Inv(y)| - 2|\Inv(y) \cap - y^{-1} \Inv(x)| =  |\Inv(x)| + |\Inv(y)| - 2|\{ \alpha \in \Inv(y) : \alpha \notin \Inv(xy) \}|$.
\end{proof}

\begin{lem}\label{aff_len_sec}
Let $x, y$ be elements of $W_{\aff}$. Then 
$$\ell(xy) = \ell(x) + \ell(y) - 2|\Inv (x) \cap \Inv(y^{-1})|.$$
\end{lem}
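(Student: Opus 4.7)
The plan is to deduce this from the previous Lemma \ref{aff_len}, which already gives
$$\ell(xy) = \ell(x) + \ell(y) - 2\,|\Inv(y) \cap -y^{-1}\Inv(x)|,$$
so the whole task reduces to showing that the set $\Inv(y) \cap -y^{-1}\Inv(x)$ is in bijection with $\Inv(x) \cap \Inv(y^{-1})$. Once that cardinality equality is established, substitution into Lemma \ref{aff_len} gives the claim.

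For the bijection, I would use the map $\alpha \mapsto -y(\alpha)$, which is the natural candidate since the condition $\alpha \in -y^{-1}\Inv(x)$ is exactly $-y(\alpha) \in \Inv(x)$. The key thing to check is that this map lands in $\Inv(x) \cap \Inv(y^{-1})$: if $\alpha \in \Inv(y)$ then $\alpha > 0$ and $y(\alpha) < 0$, so $\beta := -y(\alpha) > 0$; the hypothesis $\alpha \in -y^{-1}\Inv(x)$ gives $\beta \in \Inv(x)$ directly; and $y^{-1}(\beta) = -\alpha < 0$, so $\beta \in \Inv(y^{-1})$. Injectivity is automatic because $y$ is invertible. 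For surjectivity, given $\beta \in \Inv(x) \cap \Inv(y^{-1})$, I set $\alpha = -y^{-1}(\beta)$ and verify in the same style: $\alpha > 0$ from $\beta \in \Inv(y^{-1})$, $y(\alpha) = -\beta < 0$ so $\alpha \in \Inv(y)$, and $-y(\alpha) = \beta \in \Inv(x)$ so $\alpha \in -y^{-1}\Inv(x)$.

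There is no real obstacle here; the argument is a one-line sign-chasing bijection. The only mild thing worth being careful about is keeping the signs straight so that the two conditions ``$\alpha > 0$, $y(\alpha) < 0$'' on the $y$-side correspond cleanly to ``$\beta > 0$, $y^{-1}(\beta) < 0$'' on the $y^{-1}$-side under $\beta = -y(\alpha)$, and to not confuse $\Inv(y^{-1})$ with $-y^{-1}\Inv(\mathrm{id})$ or similar. After the bijection is in place, the conclusion is immediate from Lemma \ref{aff_len}.
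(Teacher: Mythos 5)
Your proposal is correct and matches the paper's own argument: both deduce the identity from Lemma \ref{aff_len} via the bijection $\gamma \mapsto -y(\gamma)$ between $\Inv(y) \cap -y^{-1}\Inv(x)$ and $\Inv(x) \cap \Inv(y^{-1})$. You simply spell out the well-definedness and surjectivity checks that the paper leaves implicit.
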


\begin{proof}
Using Lemma $\ref{aff_len}$, this is equivalent to showing  $|\{ \gamma\in \Inv(x) \cap \Inv(y^{-1})\}| = |\{ \gamma \in \Inv(y) : -y(\gamma) \in \Inv(x) \}|$.

We create a bijection by mapping $\gamma \in \{ \gamma \in \Inv(y) : -y(\gamma) \in \Inv(x) \}$ to $-y(\gamma) \in \{ \gamma\in \Inv(x) \cap \Inv(y^{-1})\}$, so the sets have the same size.
\end{proof}

Now we will prove Proposition \ref{prop: 6}.

\begin{proof}[Proof of Proposition \ref{prop: 6}]

We have 
\begin{align*}
\ell(X^{\tv \zeta}& \tw) = \ell_{\bi}(X^{\tv \zeta} \tw) + \ell_{\sm}(X^{\tv \zeta} \tw)\\
&=\langle \zeta, 2\rho \rangle + |\{ \gamma \in \Inv(\tw^{-1}) : \langle \tv \zeta, \gamma \rangle \leq 0 \}| - |\{ \gamma \in \Inv(\tw^{-1}) : \langle \tv \zeta, \gamma \rangle > 0 \}|.
\end{align*}

We need to show that $\ell(\tv) - \ell(\tw^{-1}\tv) =   \ell_{\sm}(X^{\tv \zeta} \tw)$.

Note that $\langle \tv \zeta, \gamma \rangle = \langle \zeta, \tv^{-1}(\gamma) \rangle$ and since $\zeta$ is dominant and regular, $\langle \zeta, \tv^{-1}(\gamma) \rangle > 0$ if and only if $\tv^{-1}(\gamma) > 0$. Similarly, $\langle \zeta, \tv^{-1}(\gamma) \rangle < 0$ if and only if $\tv^{-1}(\gamma) < 0$ (since $\zeta$ is regular, we know $\langle \zeta, \tv^{-1}(\gamma) \rangle \neq 0$).

So  $\{ \gamma \in \Inv(\tw^{-1}) : \langle \tv \zeta, \gamma \rangle \leq 0 \} = \Inv(\tw^{-1}) \cap \Inv(\tv^{-1})$ and $\{ \gamma \in \Inv(\tw^{-1}) : \langle \tv \zeta, \gamma \rangle > 0 \} = \{ \gamma \in \Inv(\tw^{-1}) :  \tv^{-1}(\gamma) > 0\}$.

By Lemma \ref{aff_len_sec}, $\ell(\tw^{-1}\tv) = \ell(\tw^{-1}) + \ell(\tv) - 2| \Inv(\tw^{-1}) \cap \Inv(\tv^{-1})|$ so $2|\Inv(\tw^{-1}) \cap \Inv(\tv^{-1})| -\ell(\tw^{-1}) = \ell(\tv) - \ell(\tw^{-1}\tv)  $. Therefore,
\begin{align*}
\ell_{\sm}(X^{\tv \zeta} \tw)& = |\{ \gamma \in \Inv(\tw^{-1}) : \langle \tv \zeta, \gamma \rangle \leq 0 \}| - |\{ \gamma \in \Inv(\tw^{-1}) : \langle \tv \zeta, \gamma \rangle > 0 \}| \\
&= |\Inv(\tw^{-1}) \cap \Inv(\tv^{-1})| - |\{ \gamma \in \Inv(\tw^{-1}) :  \tv^{-1}(\gamma) > 0\}| \\
& =|\Inv(\tw^{-1}) \cap \Inv(\tv^{-1})| - ( |\Inv(\tw^{-1})| - |\Inv(\tw^{-1}) \cap \Inv(\tv^{-1})|)\\
&=2|\Inv(\tw^{-1}) \cap \Inv(\tv^{-1})| - \ell(\tw^{-1})\\
&=\ell(\tv) - \ell(\tw^{-1}\tv).
\end{align*} \end{proof}

\subsection{Bruhat Order}

Given $x \in W$ with lev$(x) > 0$ and $\alpha$ a positive double affine root, \cite[5, Section B.2]{BKP} defined $x \rightarrow xs_{\alpha}$ if $x(\alpha) > 0$ (we exclude $x  \in W$ with $\textrm{lev}(x) = 0$ because in that case, $xs_{\alpha}$ is not always in $W$). They defined the \textbf{double affine Bruhat preorder} to be the preorder generated by these relations, (that is, $x \leq y$ if there is some chain $x \rightarrow xs_{\alpha_1} \rightarrow \dotsm \rightarrow y$), and they conjectured that it was an order. In \cite{M} it was shown that the preorder is in fact an order, and in \cite{MO} it was shown that this order coincides with the order generated by the relations: $x \rightarrow xs_{\alpha}$ if $\ell(x) \leq \ell( x s_{\alpha}).$ When multiplying on the left, we use the relation $x \rightarrow s_{\alpha} x$ if $\ x^{-1}(\alpha) > 0$.

Let $x, y \in W$. Then $y$ is said to be a cover of $x$ if $x < y$ and there is no $z \in W$ such that $x < z < y$. Similarly, $y$ is said to be a cocover of $x$ if $y < x$ and there is no $z \in W$ such that $y < z < x$.

We are interested in classifying covers and cocovers for a fixed $x \in W$ where the associated finite root system $\Phi_{\fin}$ is irreducible and simply laced. Muthiah and Orr \cite{MO} proved the following theorem that will allow us to identify cocovers and covers by a difference in length.

\begin{thm}
 \cite[Thm 1.6]{MO}
For $\alpha$ a positive double affine root and $x \in W$ with $\textrm{lev}(x) > 0$, $x s_{\alpha} \ \text{is a cover of} \ x$ if and only if $\ell(x) = \ell(x s_{\alpha}) - 1.$ 
\end{thm}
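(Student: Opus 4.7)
The plan is to prove the two implications separately.

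For the backward direction, suppose $\ell(x) = \ell(xs_\alpha) - 1$. By \cite{MO}, the Bruhat order on $W$ is generated by relations $y \to y s_\beta$ with $\ell(y) < \ell(y s_\beta)$, so every covering relation strictly increases length. Any chain $x = z_0 < z_1 < \cdots < z_k = xs_\alpha$ then has $\ell(z_0) < \ell(z_1) < \cdots < \ell(z_k)$, forcing $k \leq \ell(xs_\alpha) - \ell(x) = 1$. Hence no strict intermediate exists, so $xs_\alpha$ is a cover of $x$.

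For the forward direction, I would argue by contrapositive: given $xs_\alpha > x$ with $\ell(xs_\alpha) - \ell(x) \geq 2$, construct $z \in W$ with $x < z < xs_\alpha$. The strategy is to reduce to the Coxeter group $W_\aff$ via Proposition \ref{prop: 6}. Writing $x = X^{\tv \zeta}\tw$ with $\zeta$ dominant and regular, and expanding $s_\alpha = X^{-j\tal} Y^{-r\nu} s_\nu$ via Proposition \ref{action}, one computes $xs_\alpha = X^{\tv'\zeta'}\tw'$ for specific $\tv', \tw', \zeta'$. Applying Proposition \ref{prop: 6} to both $x$ and $xs_\alpha$ decomposes $\ell(xs_\alpha) - \ell(x)$ into changes in $\langle \zeta, 2\rho \rangle$, $\ell(\tv)$, and $\ell(\tw^{-1}\tv)$; at least one of these three contributions must jump by $\geq 2$. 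Since $W_\aff$ is a Coxeter group enjoying the chain property, any such affine jump admits an intermediate affine reflection $s_{\tbe}$, and lifting $\tbe$ to a positive double affine root $\beta$ should give the desired intermediate $z = xs_\beta$, which one then verifies satisfies $\ell(x) < \ell(z) < \ell(xs_\alpha)$.

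The main obstacle is that the three terms of Proposition \ref{prop: 6} are coupled: a single double affine reflection can alter the big length $\langle \zeta, 2\rho \rangle$ and the small length simultaneously, and the decomposition $X^{\tv\zeta}\tw$ implicitly requires $\zeta$ to be sufficiently deep in the dominant chamber. The choice of $\beta$ must therefore be made so that (i) $\beta$ is a positive double affine root, (ii) $x(\beta) > 0$ so that $x \to xs_\beta$ is a legitimate Bruhat relation, and (iii) the double affine lengths separate as $\ell(x) < \ell(xs_\beta) < \ell(xs_\alpha)$. I would handle this by a case analysis according to the form of $\alpha = -\tv\tal + j\pi$, splitting on which $j$-value is in play (in parallel with the four cases of Theorem \ref{thm: 1}) and on whether the refining affine covering lives inside the $\tv$-factor, the $\tw^{-1}\tv$-factor, or as a shift in the translation part $\zeta$. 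Checking in each of these cases that the resulting $\beta$ produces a valid strict Bruhat intermediate is where the bulk of the work lies.
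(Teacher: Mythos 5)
First, a point of comparison: the paper itself gives no proof of this statement. It is imported verbatim from Muthiah--Orr \cite[Thm 1.6]{MO} and used as a black box, so there is no internal argument to measure your proposal against; I can only assess it on its own terms.

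Your backward direction is correct and is the easy half: by \cite{MO} every generating relation $y \to ys_\beta$ of the order strictly increases length (the length difference equals the cardinality of a set containing $\beta$, hence is at least $1$), so lengths strictly increase along any chain, and a gap of $1$ leaves no room for an intermediate element. The forward direction, however, is where the entire content of \cite[Thm 1.6]{MO} lives, and your sketch has two concrete problems. First, the claim that ``at least one of these three contributions must jump by $\geq 2$'' is false: a total difference of $2$ can be realized as $+1$ in the $\ell(\tv)$ term and $+1$ in the $\ell(\tw^{-1}\tv)$ term (compare cases (1) and (3) of Theorem \ref{thm: 1}), in which case no single affine factor exhibits a non-cover and the chain property of $W_{\aff}$ gives you nothing to refine. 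Moreover, Proposition \ref{prop: 6} requires $\zeta$ dominant \emph{and regular}; a general $x \in W$ need not admit such a decomposition, which is exactly why Theorems \ref{thm: 1} and \ref{thm: 2nd} carry hypotheses like $\langle \zeta, \alpha_i \rangle \geq 2(M+1)$ or $\langle \zeta, \alpha_i \rangle > 2$. A proof of the unrestricted statement cannot lean on that proposition without handling the walls.

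Second, and more fundamentally, producing $\beta$ with $\ell(x) < \ell(xs_\beta) < \ell(xs_\alpha)$ does not place $xs_\beta$ in the open interval: you must show $xs_\beta < xs_\alpha$ \emph{in the Bruhat order}, and comparability is not implied by a length inequality. Your checklist (i)--(iii) verifies only that $x < xs_\beta$ and that the lengths separate; the link from the candidate $z$ up to $xs_\alpha$ is never constructed, and that link is precisely the hard step. The mechanism in \cite{MO} runs through the length difference set and the pairing $\beta \mapsto -s_\alpha(\beta)$ on its non-$\alpha$ elements (the phenomenon recorded after Example \ref{ex: 1}), which supplies the second reflection needed to climb from the intermediate element to $xs_\alpha$. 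Without an ingredient of this kind your plan does not close, so as it stands the forward direction is a program rather than a proof.
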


\noindent We can similarly say that $xs_{\alpha}$ is a cocover of $x$ if and only if $\ell(x) = \ell(xs_{\alpha}) + 1$.


\section{First Method for Classifying Cocovers}

We wish to determine the cocovers of a given element $x$ in $W$. To do this we extend \cite[Prop 4.1]{LS} of Lam and Shimozono and the further strengthening \cite[Prop 4.2]{Mi} by Mili\'cevi\'c  that classifies cocovers of an element $x$ in $W_{\aff}$ by using the quantum Bruhat graph of $W_{\fin}$. 
For the remainder of the paper, when considering elements $x \in W$, we will assume lev$(x) > 0$. Additionally, recall that we are only considering $W$ where the associated $\Phi_{\fin}$ is irreducible and simply laced.

\subsection{Classification}

\begin{thm}\label{thm: 1}
Let $x= X^{\tv\zeta} \tw \in W$ where $\zeta$ is dominant and $\tv, \tw \in W_{\aff}$. Let $y = s_\alpha x$ where $\alpha = -\tv \tal + j \pi$ is a positive double affine root. Choose $M$ so that $\ell(\tw), \ \ell(s_{\tv \tal} \tw) \leq M$, and assume that $ \langle \zeta, \alpha_i \rangle \geq 2(M+1)$ for $i = 0, 1, \dotsc, n$. Then $y$ is a cocover of $x$ if and only if one of the following holds:

\begin{enumerate}

\item  $\ell(\tv) = \ell(\tv s_{\tal}) + 1$ and $j = 0$ so $y = X^{\tv s_{\tal}\zeta} s_{\tv \tal} \tw.$
 
\item $\ell(\tv) = \ell(\tv s_{\tal}) + 1 - \langle \tal , 2\rho \rangle$ and $j =1$ so $y = X^{\tv s_{\tal}(\zeta - \tal)} s_{\tv \tal} \tw.$

\item  $\ell(\tw^{-1}\tv s_{\tal})  =  \ell(\tw^{-1}\tv) + 1$ and $j = \langle \zeta, \tal \rangle$ so $y = X^{\tv\zeta} s_{\tv \tal} \tw.$

\item  $\ell(\tw^{-1}\tv s_{\tal}) = \ell(\tw^{-1}\tv) + 1 - \langle \tal, 2\rho \rangle$ and $j = \langle \zeta, \tal \rangle - 1$ so $y = X^{\tv (\zeta - \tal)} s_{\tv \tal} \tw.$

\end{enumerate}
\end{thm}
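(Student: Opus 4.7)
The plan is to use the Muthiah--Orr length characterization -- $y = s_\alpha x$ is a cocover of $x$ if and only if $\ell(y) = \ell(x) - 1$ -- and to compute $\ell(y)$ via Proposition \ref{prop: 6} after expressing $y$ in the form $X^{\tv'\zeta'}\tw'$ with $\zeta'$ dominant regular. Working inside $X \rtimes W_\aff$ one has $s_\alpha = X^{j\tv\tal}\,s_{\tv\tal}$; commuting $s_{\tv\tal}$ past $X^{\tv\zeta}$ and applying $s_{\tv\tal}(\tv\zeta) = \tv s_\tal(\zeta) = \tv(\zeta - \langle\zeta,\tal\rangle\tal)$ yields
\[
y = X^{\tv(\zeta - k\tal)}\,s_{\tv\tal}\tw, \qquad k := \langle\zeta,\tal\rangle - j.
\]
The four values $j \in \{0, 1, \langle\zeta,\tal\rangle - 1, \langle\zeta,\tal\rangle\}$ correspond to $k \in \{\langle\zeta,\tal\rangle, \langle\zeta,\tal\rangle - 1, 1, 0\}$, and the four $X$-weights of $y$ displayed in the theorem follow (using $s_\tal(\zeta - \tal) = s_\tal\zeta + \tal$ in case (2) and the analogous identities).

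For the ``if'' direction, for each of those four $k$ I would write $\tv(\zeta - k\tal) = \tv'\zeta'$ with $\zeta'$ dominant regular via the decompositions $(\tv s_\tal, \zeta), (\tv s_\tal, \zeta - \tal), (\tv, \zeta), (\tv, \zeta - \tal)$ for cases (1) through (4). The hypothesis $\langle\zeta,\alpha_i\rangle \geq 2(M+1)$, combined with the simply-laced bound $|\langle\tal,\alpha_i\rangle| \leq 2$, guarantees that $\zeta - \tal$ is dominant regular in cases (2) and (4). Proposition \ref{prop: 6} applied to each decomposition, simplified via the conjugation identity $s_{\tv\tal}\tv = \tv s_\tal$, reduces $\ell(y) - \ell(x)$ in cases (1) and (3) to $\ell(\tv s_\tal) - \ell(\tv)$ and $\ell(\tw^{-1}\tv) - \ell(\tw^{-1}\tv s_\tal)$ respectively, and in cases (2) and (4) to these same expressions shifted by $-\langle\tal, 2\rho\rangle$. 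Setting each equal to $-1$ recovers the four stated length conditions.

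For the converse I must rule out every other $k$. When $\zeta - k\tal$ is still dominant regular -- which under the hypothesis holds for $2 \leq k \leq M$, and by the $s_\tal$-symmetry $(\zeta - k\tal)_+ = (\zeta - (\langle\zeta,\tal\rangle - k)\tal)_+$ also in the symmetric upper range -- the same Proposition \ref{prop: 6} computation gives
\[
\ell(y) - \ell(x) = -k\langle\tal,2\rho\rangle + \ell(\tw^{-1}\tv) - \ell(\tw^{-1}\tv s_\tal),
\]
and the Coxeter-length triangle inequality $|\ell(us_\tal) - \ell(u)| \leq \ell(s_\tal) \leq \langle\tal,2\rho\rangle - 1$ forces this to be $\leq -(k-1)\langle\tal,2\rho\rangle - 1 \leq -3$ for $k \geq 2$, so $y$ is not a cocover. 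For the intermediate range of $k$ where $\zeta - k\tal$ is not dominant, the universal bounds $|\ell_{\sm}(x)| \leq \ell(\tw) \leq M$ and $|\ell_{\sm}(y)| \leq \ell(s_{\tv\tal}\tw) \leq M$ yield the necessary cocover condition $|\ell_{\bi}(y) - \ell_{\bi}(x)| \leq 2M + 1$; I would then locate the dominant representative $(\zeta - k\tal)_+$ explicitly and argue that its height differs from $\langle\zeta, 2\rho\rangle$ by more than $2M + 1$. This final step -- controlling $(\zeta - k\tal)_+$ in the middle range of $k$ -- is the main technical obstacle, being the natural analogue inside $W_\aff$ of the quantum-Bruhat-graph analysis that \cite{LS, Mi} carry out inside $W_\fin$; the quantitative assumption $\langle\zeta,\alpha_i\rangle \geq 2(M+1)$ is precisely what is needed to prevent the dominant representative from drifting back near $\zeta$.
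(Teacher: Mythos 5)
Your reduction of $y$ to the two forms $X^{\tv s_{\tal}(\zeta - j\tal)}s_{\tv\tal}\tw = X^{\tv(\zeta - (\langle\zeta,\tal\rangle - j)\tal)}s_{\tv\tal}\tw$, the application of Proposition \ref{prop: 6} in the four boundary cases, and the use of $\ell(s_{\tal}) \leq \langle\tal,2\rho\rangle - 1$ to pin $j$ down to $\{0,1\}$ or $\{\langle\zeta,\tal\rangle-1,\langle\zeta,\tal\rangle\}$ once dominance of the relevant weight is known all coincide with the paper's argument. The gap is exactly where you flag it: for $j$ in the middle range, where neither $\zeta - j\tal$ nor $\zeta - (\langle\zeta,\tal\rangle - j)\tal$ is dominant, you propose to ``locate the dominant representative $(\zeta - k\tal)_+$ explicitly,'' but you do not carry this out, and it is not a routine computation. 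Identifying $(\zeta - k\tal)_+$ requires knowing the affine Weyl group element that dominates $\zeta - k\tal$, whose inversion set changes with $k$ in a way you have no control over; without this, the converse direction (ruling out all intermediate $j$) is unproved.

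The paper closes this gap without ever computing the dominant representative, by proving in Lemma \ref{lem: 2} that $f(j) = \ell(X^{\tv(\zeta - j\tal)})$ is a \emph{convex} function of $j$: the formula of \cite[Prop 3.10]{MO} writes $f$ as a linear function of $j$ plus a correction $g(j) = -\sum_{\gamma\in S(j)}\langle\zeta - j\tal, \tv^{-1}(2\gamma)\rangle$, which is a sum over positive roots $\gamma$ of terms of the form $\max\bigl(0, -(\text{linear in } j)\bigr)$ and hence convex. Combined with the symmetry $f(j) = f(\langle\zeta,\tal\rangle - j)$ and the exact values $f(m) = f(0) - m\langle\tal,2\rho\rangle \leq f(0) - 2m$ for $m \leq M+1$ (the range where dominance does hold, thanks to $\langle\zeta,\alpha_i\rangle \geq 2(M+1)$ and $\langle\tal,\alpha_i\rangle \leq 2$), convexity forces $f(j) \leq f(M+1) \leq f(0) - 2(M+1)$ throughout the middle range, whence $\ell(y) \leq f(j) + M \leq f(0) - M - 2 \leq \ell(x) - 2$ and $y$ cannot be a cocover. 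To complete your proof you should either establish this convexity statement or produce an independent lower bound on $\langle\zeta,2\rho\rangle - \langle(\zeta - k\tal)_+, 2\rho\rangle$ for $M+1 < k < \langle\zeta,\tal\rangle - M - 1$; the hypothesis $\langle\zeta,\alpha_i\rangle \geq 2(M+1)$ is calibrated precisely for the former.
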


Before we can prove this theorem, we will need the following lemmas, which are inspired by the proofs  of \cite[Prop 4.1]{LS} and \cite[Prop 4.2]{Mi}. 

\begin{lem}\label{lem: 2}
Define $f(j) = \ell(X^{\tv(\zeta - j \tbe)})$ where $j \in \Z$, $\zeta \in \mathcal{T}$, $\tv \in W_{\aff}$ and $\tbe$ is an affine root. Then $f(j)$ is a convex function.
\end{lem}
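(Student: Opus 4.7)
The plan is to reduce the statement to convexity of the piecewise-linear function $\phi : \mathcal{T} \to \R$ defined by $\phi(\eta) = \langle \eta_+, 2\rho \rangle$, and then to prove $\phi$ is convex by exhibiting it as a maximum of linear functionals. First, since $X^{\tv(\zeta - j\tbe)}$ has trivial affine factor, the inversion sets in the definition of $\ell$ are empty and only the big length survives:
\[
\ell(X^{\tv(\zeta - j\tbe)}) = \langle (\tv(\zeta - j\tbe))_+, 2\rho \rangle.
\]
Because $\tv \in W_{\aff}$, the elements $\tv(\zeta - j\tbe)$ and $\zeta - j\tbe$ lie in the same $W_{\aff}$-orbit, so their dominant representatives coincide, and therefore $f(j) = \phi(\zeta - j\tbe)$.

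Next I would establish
\[
\phi(\eta) = \max_{u \in W_{\aff}} \langle u\eta, 2\rho \rangle.
\]
The inequality $\phi(\eta) \leq \max_u \langle u\eta, 2\rho \rangle$ is immediate by choosing $u$ with $u\eta = \eta_+$. For the reverse inequality, write $u\eta = v(\eta_+)$ for some $v \in W_{\aff}$, giving $\langle u\eta, 2\rho \rangle - \langle \eta_+, 2\rho \rangle = -\langle \eta_+ - v(\eta_+), 2\rho \rangle$. Since $\eta_+$ is dominant, the standard Coxeter-group fact gives that $\eta_+ - v(\eta_+)$ is a non-negative integer combination of simple affine roots; in the simply-laced case $\langle \alpha_i, 2\rho \rangle = 2$ for every simple affine root $\alpha_i$, so $\langle \eta_+ - v(\eta_+), 2\rho \rangle \geq 0$, and hence $\langle u\eta, 2\rho \rangle \leq \phi(\eta)$.

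A maximum of $\R$-linear functionals in $\eta$ is a convex function of $\eta$, so $\phi$ is convex on $\mathcal{T}$. The map $j \mapsto \zeta - j\tbe$ is affine in $j$, so $f(j) = \phi(\zeta - j\tbe)$ is the restriction to $\Z$ of a convex function on $\R$, yielding the discrete convexity $f(j+1) + f(j-1) \geq 2 f(j)$ claimed by the lemma. The main obstacle I anticipate is the careful verification of the maximum formula in the affine setting, where $W_{\aff}$ is infinite and one must ensure both that the maximum is finite and that it is realized at the dominant representative; these facts follow from the level-preserving nature of the $W_{\aff}$-action on $X$ together with the dominance inequality invoked above, but they need to be stitched together for the double affine context rather than simply quoted from the finite case.
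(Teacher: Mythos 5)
Your proof is correct, but it takes a genuinely different route from the paper's. The paper invokes \cite[Prop 3.10]{MO} to write $f(j)$ as the linear term $\langle \zeta - j\tbe, \tv^{-1}(2\rho)\rangle$ plus a correction $g(j) = -\sum_{\gamma \in S(j)} \langle \zeta - j\tbe, \tv^{-1}(2\gamma)\rangle$ over the finite set $S(j)$ of positive affine roots sent negative, and then proves convexity of $g$ by a term-by-term bookkeeping argument, decomposing $S(tj_1+(1-t)j_2)$ into pieces $A_1 \cap T$, $A_2 \cap T$, $B$ and comparing against $S(j_1)$ and $S(j_2)$. You instead realize $f(j)$ directly as $\max_{u \in W_{\aff}} \langle u(\zeta - j\tbe), 2\rho\rangle$, a pointwise maximum of functions affine in $j$, so convexity (in the discrete sense $f(j+1)+f(j-1) \geq 2f(j)$) is immediate: if $u_0$ attains the maximum at $j$, then $f(j) = \tfrac12\bigl(h_{u_0}(j-1)+h_{u_0}(j+1)\bigr) \leq \tfrac12\bigl(f(j-1)+f(j+1)\bigr)$. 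Your key input is the standard dominance fact $\eta_+ - v(\eta_+) \in \sum_i \Z_{\geq 0}\alpha_i$ together with $\langle \alpha_i, 2\rho\rangle > 0$, which correctly handles both finiteness and attainment of the supremum over the infinite group $W_{\aff}$; the paper's key input is the external length formula from \cite{MO}. Your argument is more conceptual and self-contained, and it sidesteps the set-decomposition entirely. One point worth making explicit in either approach: the lemma implicitly requires $\zeta - j\tbe \in \mathcal{T}$ for the relevant $j$, which holds because the paper works throughout with $\mathrm{lev}(\zeta) > 0$ and roots have level zero; your reduction to linear functionals actually needs this only to know $f(j\pm 1)$ is defined, whereas extending to a convex function of a real variable (as you suggest at the end) is unnecessary for the discrete statement.
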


\begin{proof}
Define $S(t) = \{ \gamma \in \Phi^{+} : \langle \zeta , \tv^{-1}(\gamma)  \rangle - t \langle \tbe, \tv^{-1}(\gamma)  \rangle < 0 \}$ where $t \in \R$. Let $\tw \in W_{\aff}$ such that $\tw(\tv(\zeta -j\tbe))$ is dominant. Then $S(t)$ is a finite set contained in $\Inv(\tw)$. Using \cite[Prop 3.10]{MO}, we have 
$$f(j)= \langle \zeta - j\tbe, \tv^{-1}(2\rho)\rangle - \sum_{\gamma \in S_{(j)}} \langle \zeta - j\tbe, \tv^{-1}(2\gamma) \rangle.$$

Define $g(t)= - \sum_{\gamma \in S(t)} \langle \zeta , \tv^{-1}(2\gamma) \rangle - t \langle \tbe, \tv^{-1}(2\gamma) \rangle$ for $t \in \mathbb{R}$. We show $g(t)$ is convex by showing $g(tj_1 + (1-t)j_2) \leq tg(j_1) + (1-t)g(j_2)$ for $t \in [0, 1], j_i \in \Z$. This is trivially true for $t = 0, 1$. We consider $t \in (0,1)$. Note that $t$ and $(1 - t)$ are positive for these cases. Let
\begin{align*}
T = & \ S(tj_1 + (1 - t)j_2) \\
= & \ \{ \gamma \in \Phi^{+} : \ t \langle \zeta - j_1 \tbe, \tv^{-1}(\gamma) \rangle + (1 - t) \langle \zeta - j_2 \tbe, \tv^{-1}(\gamma) \rangle < 0 \}\\
A_1 = & \  \{ \gamma \in \Phi^{+} : \   \langle \zeta - j_1 \tbe, \tv^{-1}(\gamma) \rangle <  0,  \  \langle \zeta - j_2 \tbe, \tv^{-1}(\gamma) \rangle \geq 0 \} \subseteq S(j_1)\\
A_2 =  & \ \{ \gamma \in \Phi^{+} :  \ \langle \zeta - j_1 \tbe, \tv^{-1}(\gamma) \rangle \geq 0, \   \langle \zeta - j_2 \tbe, \tv^{-1}(\gamma) \rangle < 0 \} \subseteq S(j_2)\\
B = & \  \{ \gamma \in \Phi^{+} : \  \langle \zeta - j_1 \tbe, \tv^{-1}(\gamma) \rangle < 0,  \  \langle \zeta - j_2 \tbe, \tv^{-1}(\gamma) \rangle < 0 \} \subseteq S(j_1).
\end{align*}

All of these sets are finite because they can be contained in $S(j)$ for some $j$. Note $B = B \cap T$ and $T = (A_1 \cap T) \sqcup (A_2 \cap T) \sqcup B$. Also note $-\sum_{\gamma \in A_i \cap T} \langle \zeta - j_i \tbe, \tv^{-1}(2\gamma) \rangle \leq -\sum_{\gamma \in A_i} \langle \zeta - j_i \tbe, \tv^{-1}(2\gamma) \rangle$ for $i = 1, 2$. 

For $j$ an integer and $S$ some set, define $g(j, S) = \sum_{\gamma \in S} \langle \zeta - j \tbe, \tv^{-1}(2\gamma) \rangle$. Then
\begin{align*}
g(tj_1 + (1-t)j_2) & = -\sum_{\gamma \in T} \left( t \langle \zeta - j_1 \tbe, \tv^{-1}(2\gamma) \rangle + (1 - t) \langle \zeta - j_2 \tbe, \tv^{-1}(2\gamma) \rangle \right)\\
& =-tg(j_1, T) - (1- t)g(j_2, T)\\
& = -t(g(j_1, A_1 \cap T) + g(j_1, B) + g(j_1, A_2 \cap T)) \\
& \hspace{5mm} - (1 - t)(g(j_2, A_1 \cap T) + g(j_2, B) + g(j_2, A_2 \cap T)) \\
&\leq -t(g(j_1, A_1) + g(j_1, B)) - (1 - t)(g(j_2, A_2) + g(j_2, B))\\
&=-tg(j_1, A_1 \cup B) - (1 - t) g(j_2, A_2 \cup B)\\
&=-tg(j_1, S(j_1)) - (1 - t) g(j_2, S(j_2))\\
& =tg(j_1) + (1-t)g(j_2).
\end{align*}

So $f(j)=  \langle \zeta - j\tbe, \tv^{-1}(2\rho)\rangle + g(j)$ is convex as it is the sum of two convex functions ($\langle \zeta - j\tbe, \tv^{-1}(2\rho)\rangle$ is a function of the form $j \mapsto a + bj$ and so is convex).
\end{proof}

\begin{lem} \label{lem: 3}
Let $x= X^{\tv\zeta} \tw \in W$ where $\zeta$ is dominant. Let $y = s_\alpha x$ where $\alpha = -\tv \tal + j \pi$ is a positive double affine root. Choose $M$ so that $\ell(\tw), \ \ell(s_{\tv \tal} \tw) \leq M$. If $ \langle \zeta, \alpha_i \rangle \geq 2(M+1)$ for $i = 0, 1, \dotsc, n$ and if $y$ is a cocover of $x$, then $0 \leq j \leq M$ or $\langle \zeta, \tal \rangle - M \leq j \leq \langle \zeta, \tal \rangle$.
\end{lem}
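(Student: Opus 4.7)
The strategy is to apply Lemma \ref{lem: 2} to the translation-length function $F(k) := \ell(X^{\tv(\zeta - k\tal)})$ with $k := \langle\zeta,\tal\rangle - j$ and $N := \langle\zeta,\tal\rangle$, exploiting its convexity together with the symmetry $F(k) = F(N-k)$ and the linearity of $F$ near both endpoints $k = 0$ and $k = N$. Throughout, I assume $\tal$ is a positive affine root; the negative case reduces to this by symmetry.

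First, using $s_\alpha = X^{j\tv\tal} s_{\tv\tal}$ together with the commutation $s_{\tv\tal} X^{\tv\zeta} = X^{\tv s_\tal\zeta} s_{\tv\tal}$ and the identity $s_\tal\zeta = \zeta - N\tal$, I would rewrite
\[
y = X^{\tv(\zeta - k\tal)} s_{\tv\tal}\tw.
\]
Splitting $\ell = \ell_{\bi} + \ell_{\sm}$ and using $|\ell_{\sm}(x)| \leq \ell(\tw) \leq M$ and $|\ell_{\sm}(y)| \leq \ell(s_{\tv\tal}\tw) \leq M$, the cocover condition $\ell(y) = \ell(x) - 1$ translates into $F(k) - F(0) \in [-2M - 1,\ 2M - 1]$.

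Next I would establish three facts about $F$: (i) $F(0) = \langle\zeta, 2\rho\rangle$ since $\zeta$ is dominant; (ii) the identity $\tv(\zeta - (N-k)\tal) = (\tv s_\tal)(\zeta - k\tal)$ and the $W_{\aff}$-invariance of $\ell_{\bi}$ yield the symmetry $F(N-k) = F(k)$; and (iii) the dominance hypothesis $\langle\zeta, \alpha_i\rangle \geq 2(M+1)$ combined with the simply-laced bound $|\langle\tal, \alpha_i\rangle| \leq 2$ forces $\zeta - k\tal$ to be dominant for $k \in [0, M+1]$, yielding the linear formula $F(k) = F(0) - k\langle\tal, 2\rho\rangle$. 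By (ii), the symmetric linear formula $F(k) = F(0) - (N-k)\langle\tal, 2\rho\rangle$ holds for $k \in [N-M-1, N]$.

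For $k$ in the interior $[M+1, N-M-1]$, the endpoint linearity and $\langle\tal, 2\rho\rangle \geq 2$ give $F(M+1), F(N-M-1) \leq F(0) - 2(M+1)$, and convexity of $F$ forces $F(k) \leq F(0) - 2M - 2$ throughout, contradicting the cocover bound. For $k < 0$ (i.e.\ $j > N$), $\zeta - k\tal$ remains dominant regular, so Proposition \ref{prop: 6} applies to both $x$ and $y$, giving $\ell(y) - \ell(x) = -k\langle\tal, 2\rho\rangle + \ell(\tw^{-1}\tv) - \ell(\tw^{-1}\tv s_\tal)$; requiring this to equal $-1$ forces $\ell(\tw^{-1}\tv s_\tal) - \ell(\tw^{-1}\tv) = 1 + |k|\langle\tal, 2\rho\rangle$, which for $|k| \geq 1$ exceeds the standard reflection-length bound $\langle\tal, 2\rho\rangle - 1$, a contradiction. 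The main obstacle is setting up the symmetry $F(N-k) = F(k)$ and the endpoint linearity carefully; once these are in place, convexity handles the interior case and Proposition \ref{prop: 6} together with the reflection-length bound handles the $k < 0$ case.
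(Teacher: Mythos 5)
Your main argument --- convexity of $F$, the symmetry $F(k)=F(N-k)$, linearity of $F$ on $[0,M+1]$ coming from dominance of $\zeta-k\tal$ there, and the bounds $|\ell(x)-F(0)|\le\ell(\tw)$, $|\ell(y)-F(k)|\le\ell(s_{\tv\tal}\tw)$ --- is exactly the paper's proof, up to the relabeling $k=\langle\zeta,\tal\rangle-j$, and it correctly rules out the interior range. The gap is in how you dispose of the boundary cases. You never address $j<0$ at all, and your treatment of $k<0$ (i.e.\ $j>\langle\zeta,\tal\rangle$) rests on the claim that $\zeta-k\tal=\zeta+|k|\tal$ ``remains dominant regular,'' which is false: for a simple root $\alpha_i$ with $\langle\tal,\alpha_i\rangle<0$ one has $\langle\zeta+|k|\tal,\alpha_i\rangle=\langle\zeta,\alpha_i\rangle+|k|\langle\tal,\alpha_i\rangle<0$ once $|k|$ is large enough, so Proposition \ref{prop: 6} does not apply and the asserted length formula fails. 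Likewise, ``the negative $\tal$ case reduces to this by symmetry'' is not an argument, since the conclusion of the lemma is phrased in terms of $\tal$ itself and the hypothesis $\langle\tal,2\rho\rangle\ge 2$ that you use for the endpoint estimate requires $\tal>0$.

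All three of these issues are handled at once, and much more cheaply, the way the paper does it: since $y=s_\alpha x$ is a cocover of $x$ we have $y<x$, hence $x^{-1}(\alpha)<0$ (otherwise the generating relation would give $x\le s_\alpha x=y$, contradicting antisymmetry of the order). Writing out $\alpha=-\tv\tal+j\pi>0$ and $x^{-1}(\alpha)=-\tw^{-1}\tv\tal+(j-\langle\zeta,\tal\rangle)\pi<0$ forces $0\le j\le\langle\zeta,\tal\rangle$ directly, and then $\langle\zeta,\tal\rangle\ge 0$ together with $\zeta$ dominant and regular forces $\tal>0$. With that observation in place of your $k<0$ paragraph and your appeal to symmetry in $\tal$, your proof coincides with the paper's.
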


\begin{proof}
By assumption, $y$ is a cocover of $x$ and $-\tv \tal + j \pi > 0$ so $x^{-1}(-\tv \tal + j \pi)  = -\tw \tv \tal + (j - \langle \zeta, \tal \rangle ) \pi < 0.$ This tells us $0 \leq  j \leq \langle \zeta, \tal \rangle$. Hence, $\tal $ is positive since $\zeta$ is dominant and regular. 

Consider $\alpha_i$ such that $\langle \tal, \alpha_i \rangle < 0$. Then $\langle \zeta - j \tal , \alpha_i \rangle \geq \langle \zeta, \alpha_i \rangle > 0$ since $\zeta$ is dominant and regular. Now consider the remaining $\alpha_i$. By assumption, $\langle \zeta, \alpha_i \rangle \geq 2(M + 1) \geq \langle \tal, \alpha_i \rangle (M + 1)$ (we are using the fact that $\langle \tal, \tbe \rangle \leq 2$ for all $\tal, \tbe \in \Phi_{\aff}$ as seen in \cite{B}), so $\langle \zeta - (M + 1) \tal, \alpha_i \rangle \geq 0$. If $j \leq M + 1$ then $\zeta - j \tal$ is dominant since $\langle \zeta - j \tal , \alpha_i \rangle \geq \langle \zeta - (M + 1) \tal, \alpha_i \rangle \geq 0$.

Let $j' = \langle \zeta, \tal \rangle - j$. If $j' \leq M + 1$, then $\zeta - j' \tal$ is dominant. So if $j \geq \langle \zeta, \tal \rangle - (M + 1)$, then $\zeta - (\langle \zeta, \tal \rangle - j) \tal$ is dominant.

Following Mili\'cevi\'c \cite[Proof of Prop 4.2]{Mi}, we re-write $y$ as 
\begin{align*}
y = s_{-\tv\tal + j \pi} x & = X^{j \tv\tal} s_{\tv\tal}X^{\tv\zeta}\tw\\
& = X^{j \tv\tal + s_{\tv\tal}\tv\zeta}s_{\tv\tal}\tw\\
& = X^{\tv(s_{\tal} \zeta + j \tal)} s_{\tv \tal} \tw\\
& = X^{\tv s_{\tal} ( \zeta - j \tal)} s_{\tv \tal} \tw\\
& = X^{\tv(\zeta - (\langle \zeta, \tal \rangle - j)\tal)} s_{\tv \tal} \tw.\\
\end{align*}
Define $f(j)= \ell(X^{\tv(\zeta - j \tal)}) = \ell(X^{\tv s_{\tal}(\zeta - j \tal)})$ as in Lemma \ref{lem: 2}. Then $f(j)$ is a convex function. Note also that $f(0) = \ell(X^{\tv \zeta}) = \langle \zeta, 2\rho \rangle = \ell( X^{\tv s_{\tal}(\zeta)}) = f(\langle \zeta, \tal \rangle)$, and for $j \in [0, \langle \zeta, \tal \rangle]$, $f(j)= f(\langle \zeta, \tal \rangle - j)$ since $\tv s_{\tal}(\zeta - j \tal) = \tv(\zeta - \langle \zeta, \tal \rangle \tal + j\tal) = \tv(\zeta - (\langle \zeta, \tal \rangle - j)\tal)$. 

\begin{center}
\includegraphics[scale = .4]{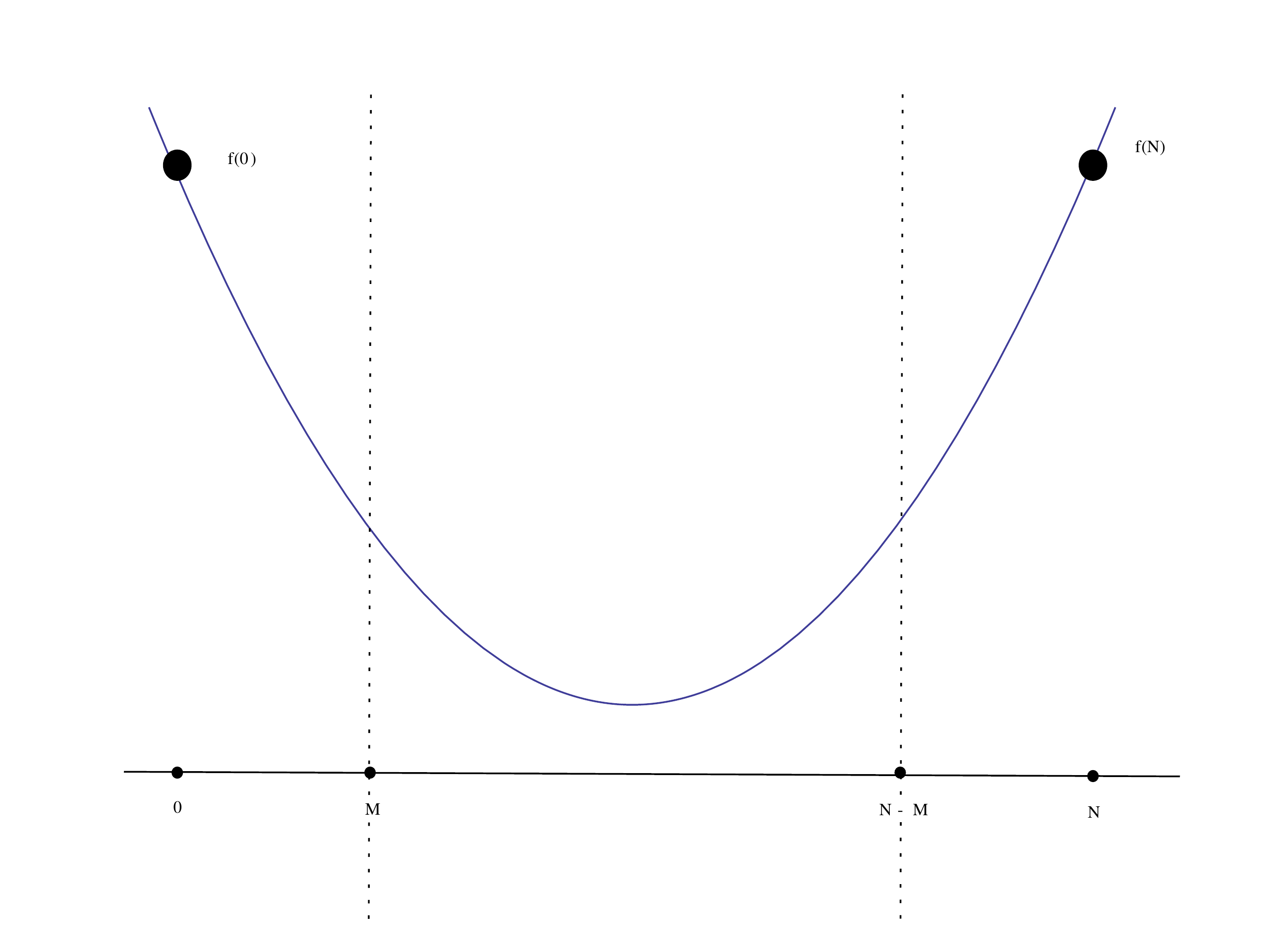}
\end{center}

The idea behind the proof, which comes from \cite[Proof of Prop 4.1]{LS}, is that $\ell(x) \approx f(0) = f(\langle \zeta, \tal \rangle)$ and $\ell(y) \approx f(j)$, so for $y$ to be a cocover of $x,$ either $f(j)$ is close to $f(0)$ (and so $j$ is close to 0) or $f(j)$ is close to $f(\langle \zeta, \tal \rangle)$ (and so $j$ is close to $\langle \zeta, \tal \rangle$). We illustrate with the picture above, using $N = \langle \zeta, \tal \rangle$ for simplicity.

We use the fact that $f(j)$ is convex and symmetric to approximate the shape of the graph. We will show that when $j$ moves beyond the dashed lines so that $M < j < N - M$, $f(j)$ becomes too far from $f(0)$ or $f(N)$ to allow $y$ to be a cocover of $x$.

Fix $j$ such that $M < j < N - M$. First we will make precise what we mean by $\ell(x) \approx f(0) = f(\langle \zeta, \tal \rangle)$ and $f(y) \approx f(j)$:
 
We have $|\ell(x) - f(0)| = |\ell(x) - f(\langle \zeta, \tal \rangle) | = |\ell(X^{\tv\zeta}\tw) - \ell(X^{\tv\zeta})| \leq \ell(\tw) \leq  M.$

For the given $j$, $|\ell(y) - f(j)| = |\ell(X^{\tv s_{\tal} ( \zeta - j \tal)} s_{\tv \tal} \tw) - \ell(X^{\tv s_{\tal}(\zeta - j \tal)})| \leq \ell(s_{\tv \tal}\tw) \leq M.$

Additionally, if $m \leq M + 1$, then $\zeta - m \tal$ is dominant, so $f(m) = \langle \zeta - m \tal, 2\rho \rangle = f(0) - m \langle \tal, 2\rho \rangle \leq f(0) - 2m$ (we are using the fact that $\langle \tal, 2\rho \rangle = 2\h(\tal) \geq 2$).

Lastly, note that $f(j) \leq f(M + 1)$ because of $f$'s symmetry and convexity. By putting these three things together, we have 
\begin{align*}
\ell(y) & \leq  f(j) + M\\
& \leq  f(M + 1) + M\\
& \leq f(0) - 2(M + 1)+ M\\
&=f(0)-M-2\\
&\leq  \ell(x) - 2.
\end{align*}

So if $M < j < N - M$,  $y$ cannot be covered by $x$. Therefore, $0 \leq j \leq M$ or $\langle \zeta, \tal \rangle - M \leq j \leq \langle \zeta, \tal \rangle$.

\end{proof}

Now we can prove Theorem \ref{thm: 1}.

\begin{proof}[Proof of Theorem \ref{thm: 1}]
We follow the proof of \cite[Prop 4.2]{Mi}. Assume $y$ is covered by $x$. Recall from our proof of Lemma \ref{lem: 3} that
\begin{align*}
y &= s_{-\tv\tal + j \pi} x \\
& = X^{\tv s_{\tal} ( \zeta - j \tal)} s_{\tv \tal} \tw\\
& = X^{\tv(\zeta - (\langle \zeta, \tal \rangle - j)\tal)} s_{\tv \tal} \tw.\\
\end{align*}
By Lemma \ref{lem: 3}, we know that if $s_{\alpha}x$ is covered by $x$, then $0 \leq j \leq M$ or $\langle \zeta, \tal \rangle - M \leq j \leq \langle \zeta, \tal \rangle$. We will show that under these conditions either $\zeta - j \tal$ or $\zeta - (\langle \zeta, \tal \rangle - j) \tal$ must be dominant and regular.

By assumption, $\langle \zeta, \alpha_i \rangle \geq 2(M + 1) \geq \langle \tal, \alpha_i \rangle (M + 1)$, so $\langle \zeta - (M + 1) \tal, \alpha_i \rangle \geq 0$.  If $\langle \tal, \alpha_i \rangle < 0,$ then $\langle \zeta - j \tal , \alpha_i \rangle > \langle \zeta, \alpha_i \rangle > 0$. Otherwise, if $j \leq M$, $\langle \zeta - j \tal , \alpha_i \rangle > \langle \zeta - (M + 1) \tal, \alpha_i \rangle \geq 0$, so $\zeta - j \tal$ is dominant and regular. As in the proof of Lemma \ref{lem: 3}, we set $j' = \zeta - j \tal$ to show that $\zeta - (\langle \zeta, \tal \rangle - j) \tal$ is dominant and regular if $j \geq \langle \zeta, \tal \rangle - M$.

First, suppose $\zeta - j\tal$ is dominant and regular. Using Proposition \ref{prop: 6} and $\ell(y) = \ell(X^{\tv s_{\tal} ( \zeta - j \tal)} s_{\tv \tal} \tw):$
\begin{align*}
\ell(s_\alpha x) &= \ell(y) = \ell(X^{\zeta - j \tal}) - \ell(\tw^{-1} s_{\tv \tal} \tv s_{\tal} ) + \ell(\tv s_{\tal}) \\
&= \langle \zeta - j\tal, 2\rho \rangle - \ell(\tw^{-1} \tv) + \ell(\tv s_{\tal}) \\
&= \langle \zeta - j \tal, 2\rho \rangle - \ell(\tv^{-1} \tw) + \ell(\tv s_{\tal}).
\end{align*}

And since $\ell(x) = \langle \zeta, 2\rho \rangle - \ell(\tv^{-1} \tw) + \ell(\tv)$, we have $\ell(x) - \ell(s_\alpha x) = j\langle \tal, 2\rho \rangle + \ell(\tv) - \ell(\tv s_{\tal})$. So $\ell(x) - \ell(s_\alpha x) = 1$ if and only if $1 = j \langle \tal, 2\rho \rangle + \ell(\tv) - \ell(\tv s_{\tal})$.

Using \cite[Prop 6.5]{MM}, which says $\ell(s_{\tal}) \leq \langle \tal, 2\rho \rangle - 1$, and using the fact that $\ell(\tv) - \ell(\tv s_{\tal}) \geq -\ell(s_{\tal})$, we have $1 - j \langle \tal, 2\rho \rangle = \ell(\tv) - \ell(\tv s_{\tal}) \geq 1 - \langle \tal, 2\rho \rangle$. This gives $(1 - j) \langle \tal, 2\rho \rangle \geq 0,$ and since $\tal > 0$ and $j \geq 0,$ we have two possibilities. Either $j = 0$ and $\ell(\tv) = \ell(\tv s_{\tal}) + 1,$ or $j =1$ and $\ell(\tv) = \ell(\tv s_{\tal}) + 1 - \langle \tal , 2\rho \rangle$. Then the form of $y$ is determined by these values of $j$.

Next, suppose $\zeta - (\langle \zeta, \tal \rangle - j)\tal$ is dominant and regular. Using Proposition \ref{prop: 6} and $\ell(s_\alpha x) = \ell(X^{\tv(\zeta - (\langle \zeta, \tal \rangle - j)\tal)} s_{\tv \tal} \tw):$
\begin{align*}
\ell(s_\alpha x) & = \ell(X^{\zeta - (\langle \zeta, \tal \rangle - j)\tal}) - \ell(\tv^{-1} s_{\tv \tal} \tw) + \ell(\tv^{-1}) \\
&= \langle \zeta - (\langle \zeta, \tal \rangle - j)\tal, 2\rho \rangle - \ell(s_{\tal} \tv^{-1} \tw) + \ell(\tv).
\end{align*}

So $1 = \ell(x) - \ell(s_\alpha x) = \ell(s_{\tal} \tv^{-1} \tw) - \ell(\tv^{-1} \tw) - \langle (j - \langle \zeta, \tal \rangle) \tal , 2\rho \rangle$. 

Using \cite[Prop 6.5]{MM} and the fact that $\ell(s_{\tal} \tv^{-1} \tw) - \ell(\tv^{-1} \tw)  \geq -\ell(s_{\tal})$, we have $1 - \langle \tal , 2\rho \rangle \leq \ell(s_{\tal} \tv^{-1} \tw) - \ell(\tv^{-1} \tw) = 1 + (j - \langle \zeta, \tal \rangle) \langle \tal , 2\rho \rangle$, and $0 \leq (j - \langle \zeta, \tal \rangle + 1)\langle \tal ,2\rho \rangle$. 

Using $\langle \tal ,2\rho \rangle > 0$ and $0 \leq j \leq \langle \zeta, \tal \rangle,$ we have two possibilities. Either $j = \langle \zeta, \tal \rangle$ and $\ell(s_{\tal} \tv^{-1} \tw) - \ell(\tv^{-1} \tw) = 1,$ or $j = \langle \zeta, \tal \rangle - 1$ and $\ell(s_{\tal} \tv^{-1} \tw) - \ell(\tv^{-1} \tw) = 1 - \langle \tal, 2\rho \rangle$. Then the form of $y$ is determined by these values of $j$.
\end{proof}

\subsection{Quantum Bruhat Graphs}

\begin{definition}
We define the \textbf{quantum Bruhat graph} (QBG) of $W_{\aff}$ to be the graph whose set of vertices consists of the elements of $W_{\aff}$ and whose edge set is created by making a directed edge from $\tv s_{\tal}$ to $\tv$ for $\tal$ a positive affine root if one of the following holds:

\begin{enumerate}
\item $\ell(\tv) = \ell(\tv s_{\tal} ) + 1$
\item $\ell(\tv) = \ell(\tv s_{\tal} ) - \langle \tal, 2\rho \rangle + 1.$
\end{enumerate}

\noindent The edges are labeled by $\tal$.

\end{definition}

\begin{rem}
The edges in the QBG of $W_{\aff}$ that meet the first length requirement represent covers in the affine Bruhat order of the form $\tv s_{\tal} \lessdot \tv$. They are the edges that appear in the Hasse diagram for $W_{\aff}$. 
\end{rem}

\begin{ex}
Let $W_{\aff}$ be of type $\tilde{A}_1$. Then the QBG of $W_{\aff}$ is given below.
\begin{center}
\includegraphics[scale=.5]{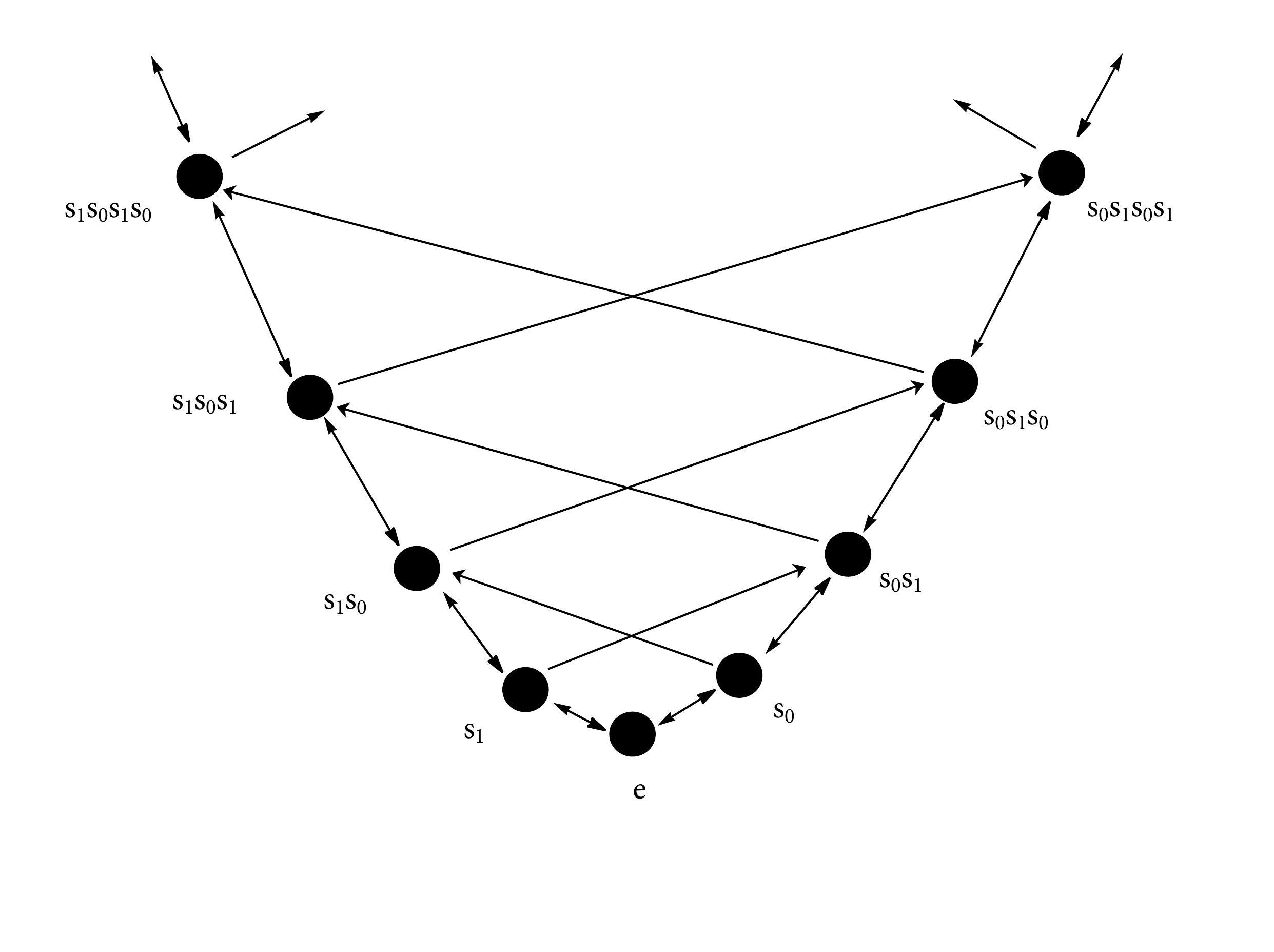}
\end{center}
\end{ex}

\begin{rem}
There is a correspondence from the length conditions required in Theorem \ref{thm: 1} to the edges in the quantum Bruhat graph of $W_{\aff}$.

\begin{itemize}

\item Length condition (1) corresponds to an upward edge in the QBG of the form $\tv s_{\tal} \rightarrow \tv$ with length change +1. 

\item Length condition (2) corresponds to a downward edge in the QBG of the form $\tv s_{\tal} \rightarrow \tv$ with length change $-(\langle \tal, 2\rho \rangle - 1)$. 

\item Length condition (3) corresponds to an upward edge in the QBG of the form $\tw^{-1} \tv \rightarrow \tw^{-1} \tv s_{\tal}$ with length change +1. 

\item Length condition (4) corresponds to a downward edge in the QBG of the form $\tw^{-1} \tv \rightarrow \tw^{-1} \tv s_{\tal}$ with length change $-(\langle \tal, 2\rho \rangle - 1)$. 

\end{itemize}
\end{rem}

Because of the correspondence in Theorem \ref{thm: 1} between the length conditions and the QBG, we can find cocovers in $W$ by considering edges in the QBG of $W_{\aff}$.

\begin{ex}
The QBG with $W_{\aff}$ of type $\tilde{A}_1$ has the upward edge $s_1 s_0 \rightarrow s_0 s_1 s_0$. If we pick $\tv = s_0s_1s_0$ and $\tv s_{\tal} = s_1s_0$, then this edge corresponds to the first cocover type in Theorem \ref{thm: 1} and $j = 0$. The reflection we are extending by is $s_{\tal} = s_0s_1s_0s_1s_0$, so $\tal = s_0s_1(\alpha_0) = 3\alpha_0 + 2\alpha_1 = -\alpha_1 + 3\delta$, and $\alpha = -\tv \tal + j \pi= -\alpha_1 + \delta = \alpha_0$. 

To make the length bound needed in Theorem \ref{thm: 1} small, we pick $\tw = id$. Then $\ell(\tw) = 0$ and $\ell(s_{\tv \tal}\tw) = 1,$ so we can take $M = 1$.

We pick $\zeta = 2\alpha_1 + \delta + 8\Lambda_0$ and check $\langle \zeta, \alpha_i \rangle \geq 2(M + 1) = 4$ for $i = 0, 1$. With these choices,
$$x = X^{s_0 s_1 s_0 (\zeta)} = X^{14\alpha_1 - 23 \delta + 8\Lambda_0}, \ \ y = X^{s_1 s_0 (\zeta)} Y^{\alpha_1} s_1 = X^{-6\alpha_1 - 3\delta + 8\Lambda_0}Y^{\alpha_1}s_1,$$

\noindent and $y$ is a cocover of $x$. Further, we can confirm this by using Sage \cite{S} to check the lengths. Indeed, $\ell(x) -\ell(y) = 8 - 7 = 1$.
\end{ex}


\section{Second Method for Classifying Cocovers}

To rid ourselves of the bounds needed on $\ell(\tw)$ and $\ \ell(s_{\tv\tal}\tw)$ in Theorem \ref{thm: 1}, we look at cocovers of $x \in W$ in a different way. We take a more geometrical approach by using the length difference set defined by Muthiah and Orr \cite{MO}.

Recall that we are assuming lev$(x) > 0$ whenever considering $x \in W$, and we are restricting $\Phi_{\fin}$ to be irreducible and simply laced.

 \begin{thm}[MO]
 
Let $x = X^{\zeta}\tw$ with $\zeta \in \mathcal{T}$ and $\tw \in W_{\aff}$. Let $\alpha$ be a positive double affine root such that $x^{-1}(\alpha) < 0$. Then $y =  s_\alpha x \leq x$ with respect to the Bruhat order by definition, and
$$\ell(y) = \ell(x) - |\{\beta \in \Phi^+ \ : \  x^{-1}(\beta) < 0, \ \ s_\alpha(\beta) < 0, \ \ x^{-1}s_{\alpha}(\beta) > 0\}|.$$

\vspace{3mm}

In particular, $L_{x,\alpha} :=\{\beta \in \Phi^+ \ : \  x^{-1}(\beta) < 0, \ \ s_\alpha(\beta) < 0, \ \ x^{-1}s_{\alpha}(\beta) > 0\}$ is finite.
\end{thm}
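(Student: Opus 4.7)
The plan is to compute $\ell(x)-\ell(s_\alpha x)$ directly from the length formula of \cite{M} and to match the result against $|L_{x,\alpha}|$. First I would factor $s_\alpha = X^{-j\tal} s_{\tal}$ (for $\alpha = \tal + j\pi$) to obtain the explicit form $y := s_\alpha x = X^{s_{\tal}\zeta - j\tal}\,s_{\tal}\tw$, and then unpack the three conditions cutting out $L_{x,\alpha}$ in inversion-theoretic language: the condition $x^{-1}(\beta)<0$ says that $\beta$ is a positive real inversion of $x$, the condition $y^{-1}(\beta) = x^{-1}s_\alpha(\beta) > 0$ says that $\beta$ is \emph{not} an inversion of $y$, and $s_\alpha(\beta)<0$ says that $\beta$ is itself an inversion of $s_\alpha$. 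So $L_{x,\alpha}$ is the set of positive double affine roots that get ``flipped away'' when passing from $x$ to $y$, which is exactly what the length drop should measure.

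The core computation would then be a signed-counting argument. Using the decomposition $\ell = \ell_{\bi} + \ell_{\sm}$ and, where helpful, first reducing to the regular dominant case via Proposition \ref{prop: 6}, I would split each positive real double affine root $\beta$ according to the signs of the pairings $\langle \zeta, \tv^{-1}(\cdot)\rangle$ that control the signed term in $\ell_{\sm}$. Each such $\beta$ contributes $+1$, $0$, or $-1$ to $\ell(x)$ and, independently, to $\ell(y)$; the target identity amounts to showing that the net change equals the count of those $\beta$'s whose inversion-theoretic status transitions in exactly the manner prescribed by the three-part condition defining $L_{x,\alpha}$.

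The hard part will be the bookkeeping across the boundary cases, especially the interaction between the shift in $X$-weight from $\zeta$ to $s_{\tal}\zeta - j\tal$ and the change in affine-Weyl part from $\tw$ to $s_{\tal}\tw$. Concretely, I would need to show that differences coming from $\ell_{\bi}$---where the dominant representatives of $\zeta$ and of $s_{\tal}\zeta - j\tal$ may differ substantially---cancel against matching terms from $\ell_{\sm}$, leaving only the desired $|L_{x,\alpha}|$ contribution. Once this identification is in hand, finiteness of $L_{x,\alpha}$ is automatic, since the left side $\ell(x)-\ell(y)$ is a nonnegative integer.
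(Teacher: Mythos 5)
The paper does not prove this statement at all: it is quoted verbatim from Muthiah and Orr \cite{MO} (it is their length-additivity result for left multiplication by a reflection), and the paper uses it as a black box to define the length difference set $L_{x,\alpha}$. So there is no internal proof to compare against; your proposal has to stand on its own as a reconstruction of the argument in \cite{MO}.

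As it stands, the proposal has a genuine gap: the central identity is asserted rather than derived. The set $L_{x,\alpha}$ ranges over the two-parameter family of positive double affine roots $\beta = \gamma + p\delta + q\pi$ (both $p$ and $q$ vary over $\Z$), while the length function you propose to expand is $\ell_{\bi} + \ell_{\sm}$, where $\ell_{\sm}$ is a signed count over the \emph{finite} set $\Inv(\tw^{-1}) \subset \Phi_{\aff}$ and $\ell_{\bi}$ is the pairing $\langle \zeta_+, 2\rho\rangle$ with the dominant representative. There is no term-by-term correspondence between these two kinds of data: to compare them one has to organize the double affine roots lying over a fixed affine root $\tbe$ into $\pi$-strings, show that along each string the conditions $x^{-1}(\beta)<0$ and $x^{-1}s_\alpha(\beta)>0$ cut out an interval of $q$-values whose cardinality is a difference of pairings against $\zeta$ and against the new $X$-weight $s_{\tal}\zeta - j\tal$, and only then does the sum over strings reproduce $\langle \zeta_+,2\rho\rangle - \langle (s_{\tal}\zeta - j\tal)_+,2\rho\rangle$ plus the $\ell_{\sm}$ corrections. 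That string decomposition is the actual content of the theorem, and your sketch defers it entirely to ``bookkeeping across the boundary cases'' without exhibiting the mechanism. Two smaller points: the proposed reduction ``to the regular dominant case via Proposition \ref{prop: 6}'' is not available in the stated generality, since that proposition requires $\zeta$ regular and dominant while the theorem allows arbitrary $\zeta \in \mathcal{T}$ (in particular level-zero imaginary weights $m\delta$, where $\langle \zeta_+, 2\rho\rangle$ degenerates); and the final finiteness claim is fine only once the identity is proved with the right-hand side interpreted as a possibly infinite cardinal, which your intermediate steps would need to accommodate.
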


We call $L_{x,\alpha}$ the length difference set for $x$ and $y = s_\alpha x,$ and note that $y$ is a cocover of $x$ if and only if $L_{x, \alpha} = \{\alpha\}$. This is because $y$ is a cocover if and only if the length difference is 1, and $\alpha$ is always in $L_{x, \alpha}$ if $y =s_\alpha x \leq x$. 


\begin{ex}\label{ex: 1}

Let $W_{\aff}$ be of type $\widetilde{A}_2$, $x = X^{\alpha_1 + \alpha_2+\delta + \Lambda_0}Y^{\alpha_2}$, and $\alpha = \alpha_1 - 2\delta + \pi$. With this setup,
$$L_{x,\alpha} = \{ \alpha, \theta -3\delta + \pi, -\alpha_2 + \delta\}.$$

At this point, we will not go into the detail of checking that these elements do in fact belong to the length difference set (and are the only elements that do belong), but we can do a quick check of the order. Using Sage \cite{S}, we find $\ell(x) = 12$ and $\ell(s_{\alpha}x) = 9,$ so the length difference set must indeed contain 3 elements. 

Note that the two elements of the length difference set that are not $\alpha$ are $\theta - 3\delta + \pi$ and $-\alpha_2 + \delta = -s_{\alpha}(\theta - 3\delta + \pi)$. In general, the elements of the length difference set that are not equal to $\alpha$ will come in such pairs. If $\beta \in L_{x, \alpha}$ and $\beta \neq \alpha$ then $-s_{\alpha}(\beta) \in L_{x, \alpha}$.

\end{ex}


\subsection{Graphs}

To take a geometrical approach, we need a way to envision the elements of the length difference set. We will begin by graphing the positive double affine roots $\alpha$ such that $y = s_{\alpha}x$ is less than $x$ with respect to the Bruhat order. 

\begin{definition}
Let $\nu \in \Phi_{\fin}$ and let $\Gamma_{x,\nu}$ denote the points $(r,j) \in \mathbb{Z}^2$ such that $\alpha = \nu + r\delta + j\pi > 0$ and $x^{-1}(\alpha) < 0$. We call this the \textbf{lower graph of $x$ corresponding to $\nu$} and say $\alpha$ corresponds to a point in $\Gamma_{x,\nu}$ if $\alpha = \nu + r\delta + j\pi$ such that $(r,j) \in \Gamma_{x, \nu}$.
\end{definition}

\begin{figure}[H]
\begin{tikzpicture}[scale = .8]
\fill[blue!50!cyan, opacity=0.3] (-4,4) -- (0,0) -- (-6,0) -- (-6,4) -- cycle;
\draw [->, ultra thick] (0,0) -- (-4,4);
\draw [->, ultra thick] (0,0) -- (-6,0);
\end{tikzpicture}
\caption{A general $\Gamma_{x, \nu}$} \label{fig:cocovers}
\end{figure}
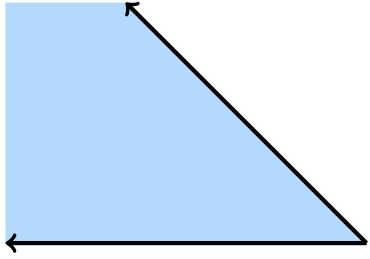

It is important to note that the two outer edges appearing in the graph above may or may not be included (and it is very possible that only part of an edge will be included) depending on the choice of $x$ and $\nu$. 

Because the graph shows all $\alpha$ such that $x \geq s_{\alpha}x$, it is clear that the order of $L_{x,\alpha}$ will be greater than or equal to one if $\alpha$ corresponds to a point in $\Gamma_{x,\nu}.$

\begin{definition}
Let $\alpha = \nu + r\delta + j\pi$ be a double affine root. Then we say $\nu$ is the \textbf{finite part of} $\alpha$ because $\nu \in \Phi_{\fin}$. We denote this by $\fin(\alpha) = \nu$.
\end{definition}

\begin{prop}\label{prop: 7}
Let $\alpha = \nu + r\delta + j\pi$ and $\beta = \gamma + p \delta + q\pi$. The double affine root $\beta$ is in $L_{x,\alpha}$ if and only if $\beta \in \Gamma_{x, \fin(\beta)}$ and $-s_{\alpha}\beta \in \Gamma_{x, \fin(-s_{\alpha}(\beta))}$.
\end{prop}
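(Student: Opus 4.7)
The plan is to show that both sides of the biconditional unpack to the same four conditions on $\beta$. Recall that $\beta \in L_{x,\alpha}$ requires precisely: (i) $\beta > 0$, (ii) $x^{-1}(\beta) < 0$, (iii) $s_\alpha(\beta) < 0$, and (iv) $x^{-1} s_\alpha(\beta) > 0$. I would split the right-hand side of the biconditional into its two clauses and match them against these four conditions pairwise.

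First, I would observe that the clause $\beta \in \Gamma_{x, \fin(\beta)}$ is, by the very definition of the lower graph, just a restatement of conditions (i) and (ii). Writing $\beta = \gamma + p\delta + q\pi$ with $\fin(\beta) = \gamma$, a point $(p,q)$ lies in $\Gamma_{x,\gamma}$ exactly when $\beta$ is a positive double affine root and $x^{-1}(\beta)$ is negative, so this pair of conditions is covered without any further work.

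Next, for the second clause $-s_\alpha(\beta) \in \Gamma_{x, \fin(-s_\alpha(\beta))}$, I would set $\beta' := -s_\alpha(\beta)$ and again apply the definition of $\Gamma_{x,\fin(\beta')}$. This gives $\beta' > 0$ and $x^{-1}(\beta') < 0$. By linearity of the $s_\alpha$-action on $\Phi$ and of the $x^{-1}$-action (as in Proposition \ref{action}), negation commutes through both: $\beta' > 0$ rewrites as $s_\alpha(\beta) < 0$, which is (iii), and $x^{-1}(\beta') < 0$ rewrites as $x^{-1} s_\alpha(\beta) > 0$, which is (iv). Combining the two clauses therefore reproduces exactly the four defining conditions of $L_{x,\alpha}$, and the biconditional follows.

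There is essentially no obstacle here; the proof is a bookkeeping argument with one small subtlety, namely verifying that $-s_\alpha(\beta)$ is again a genuine double affine root so that the notation $\Gamma_{x,\fin(-s_\alpha(\beta))}$ is well-defined. This is immediate from the explicit formulas for the action of $s_\alpha$ on $\Phi$ given in Proposition \ref{action}, which show that $s_\alpha$ permutes $\Phi$, and hence so does negation composed with $s_\alpha$. Consequently the finite part $\fin(-s_\alpha(\beta))$ is well-defined, and the argument above goes through without further issue.
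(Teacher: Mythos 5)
Your proposal is correct and follows essentially the same route as the paper: both sides of the biconditional are unpacked into the four defining conditions of $L_{x,\alpha}$ using the definition of the lower graph, with negation commuting through the linear actions of $s_\alpha$ and $x^{-1}$. The paper's proof is the same bookkeeping argument, merely recording in addition that $\fin(-s_{\alpha}(\beta)) = -s_{\nu}(\gamma)$, which is the well-definedness point you also address.
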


\begin{proof}
Note that $\fin(\alpha) = \nu,$ $\fin(\beta) = \gamma,$ and $\fin(-s_{\alpha}(\beta)) = -s_{\nu}(\gamma)$.

Let $\beta \in L_{x,\alpha}$. Then $\beta > 0$ and $x^{-1}(\beta) < 0$, so $\beta \in \Gamma_{x, \gamma}$. Additionally, $s_\alpha(\beta) < 0$ and $x^{-1}(s_\alpha \beta) > 0$, so $-s_{\alpha}(\beta) \in \Gamma_{x, -s_\nu(\gamma)}.$

Let $\beta \in \Gamma_{x, \gamma}$ and $-s_{\alpha}(\beta) \in \Gamma_{x, -s_\nu(\gamma)}.$ Then $\beta > 0$, $x^{-1}(\beta) < 0$, $-s_\alpha(\beta) > 0,$ and $-x^{-1}(s_\alpha(\beta)) < 0$, so $\beta \in L_{x,\alpha}$. 
\end{proof}

 \begin{ex}
  
Consider $W_{\aff}$ of type $\widetilde{A}_2$ and $x = X^{\alpha_1 + \alpha_2+\delta + \Lambda_0}Y^{\alpha_2}$ (the same choices from Example \ref{ex: 1}). The lower graph of $x$ corresponding to $\alpha_1$ is given below.
  
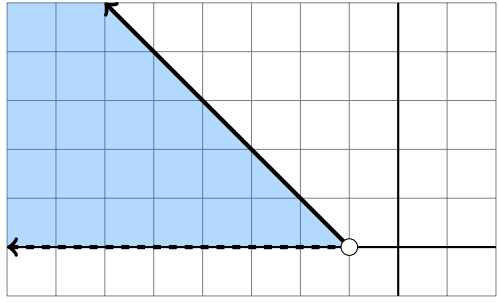
\begin{figure}[H]
\begin{tikzpicture}[scale = .65]
\draw[help lines] (-8,-1) grid (2,5);
\fill[blue!50!cyan, opacity=0.3] (-8,0) -- (-1,0) -- (-6,5) -- (-8, 5) -- cycle;
\draw [->, ultra thick] (-1,0) -- (-6,5);
\draw [<-, ultra thick] [dashed] (-8,0) -- (-1,0);
\draw[thick](-8,0)--(2,0);
\draw[thick](0,5)--(0,-1);
\draw[fill=white, radius = .1] (-1,0) circle [radius = 0.17];
\end{tikzpicture}
\caption{$\Gamma_{x, \alpha_1}$} \label{fig:alp1}
\end{figure}
To see why this is the graph for $\Gamma_{x, \alpha_1},$ we need to examine when $\alpha > 0$ and $x^{-1}(\alpha) < 0$. 

The double affine root $\alpha = \alpha_1 + r\delta + j\pi$ is positive if and only if one of the following holds:

\begin{enumerate}
\item $j > 0$
\item $j = 0$ and $r > 0$.
\end{enumerate}

To determine when $x^{-1}(\alpha) < 0$, it will help to expand $x^{-1}(\alpha)$:
\begin{align*}
x^{-1}(\alpha) & = Y^{-\alpha_2}X^{-\alpha_1 - \alpha_2 - \delta - \Lambda_0}(\alpha_1 + r\delta + j\pi)\\ 
& = \alpha_1 + (r + \langle \alpha_1, \alpha_2 \rangle)\delta + (j + \langle \alpha_1 + r \delta, -\alpha_1 - \alpha_2 - \delta - \Lambda_0 \rangle) \pi \\
& = \alpha_1 + (r - 1) \delta + (j + r + 1) \pi.
\end{align*}

Now we can see that $x^{-1}(\alpha) < 0$ if and only if one of the following holds:
\begin{enumerate}
\item $j < -r - 1$
\item $j = -r - 1$ and $r < 1.$
\end{enumerate}

Combining these restrictions results in the graph shown above.
\end{ex}

\begin{prop}
Fix $x = X^{\zeta}\tw \in W$ with $\tw = Y^{\lambda}w \in W_{\aff}$ and fix $\nu \in \Phi_{\fin}$. The point $(r, j) \in \Gamma_{x, \nu}$ if and only if one of the following holds:
\begin{enumerate}
\item $0 < j <  \langle - \zeta, \tal \rangle = - \langle \zeta, \nu + r\delta \rangle$
\item $(r, j) = (r, 0)$ with $0 \leq r \leq \frac{\langle \nu, \mu \rangle}{- l}$,  and if $r = 0$, then $\nu > 0$
\item $(r, j) = (r, \langle - \zeta, \tal \rangle) = (r, - \langle \nu, \mu \rangle - lr)$ with $r \leq \ $min$\{ \frac{\langle \nu, \mu \rangle}{- l}, - \langle \lambda, \nu \rangle \}$, and if $r = - \langle \lambda, \nu \rangle$, then $w^{-1}(\nu) < 0$.
\end{enumerate}
\end{prop}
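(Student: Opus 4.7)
The approach is to translate the two defining conditions of $\Gamma_{x,\nu}$---that $\alpha = \nu + r\delta + j\pi > 0$ and that $x^{-1}(\alpha) < 0$---into explicit inequalities in $r$ and $j$, and then check that these inequalities carve out exactly the three enumerated cases.

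First, I would compute $x^{-1}$ explicitly in $X \rtimes W_{\aff}$. Using the semidirect product relation $\tw^{-1} X^{-\zeta} = X^{-\tw^{-1}\zeta} \tw^{-1}$, we have $x^{-1} = X^{-\tw^{-1}\zeta}\tw^{-1}$. Then Proposition \ref{action}, together with the $W_{\aff}$-invariance of the pairing, yields
$$x^{-1}(\alpha) = \tw^{-1}(\tal) + (j + \langle \zeta, \tal \rangle)\pi = \tw^{-1}(\tal) - (\langle -\zeta, \tal \rangle - j)\pi.$$
A direct expansion with $\tw = Y^{\lambda} w$ and $\zeta = \mu + m\delta + l\Lambda_0$ gives $\tw^{-1}(\tal) = w^{-1}(\nu) + (r + \langle \lambda, \nu \rangle)\delta$ and $\langle \zeta, \tal \rangle = \langle \nu, \mu \rangle + lr$.

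Next, translate the positivity conditions using the sign conventions from Section 2.1. The root $\alpha > 0$ holds iff either $\tal > 0$ with $j \geq 0$, or $\tal < 0$ with $j > 0$; similarly $x^{-1}(\alpha) < 0$ holds iff either $\tw^{-1}(\tal) < 0$ with $j \leq \langle -\zeta, \tal \rangle$, or $\tw^{-1}(\tal) > 0$ with $j < \langle -\zeta, \tal \rangle$. These jointly force $0 \leq j \leq \langle -\zeta, \tal \rangle$ for every $(r,j) \in \Gamma_{x,\nu}$, and the proof then proceeds by case analysis on which of the three subintervals $j$ lies in. When $0 < j < \langle -\zeta, \tal \rangle$, both conditions hold automatically irrespective of the signs of $\tal$ and $\tw^{-1}(\tal)$, matching case (1). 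When $j = 0$, positivity of $\alpha$ forces $\tal > 0$ (i.e.\ $r > 0$, or $r = 0$ with $\nu > 0$), and $x^{-1}(\alpha) < 0$ forces $\langle \zeta, \tal \rangle \leq 0$ (i.e.\ $r \leq \tfrac{\langle \nu, \mu \rangle}{-l}$), yielding case (2). When $j = \langle -\zeta, \tal \rangle$, the condition $x^{-1}(\alpha) < 0$ forces $\tw^{-1}(\tal) < 0$ (i.e.\ $r \leq -\langle \lambda, \nu \rangle$), while $j \geq 0$ forces $r \leq \tfrac{\langle \nu, \mu \rangle}{-l}$; combined this gives the minimum in case (3).

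The main obstacle is the careful bookkeeping at the boundaries where the $\delta$-coefficient of an affine root vanishes: $r = 0$ in $\tal$, $r = -\langle \lambda, \nu \rangle$ in $\tw^{-1}(\tal)$, and $r = \tfrac{\langle \nu, \mu \rangle}{-l}$ in $\langle \zeta, \tal \rangle$. At each such boundary the comparison to zero is decided by the sign of the finite part ($\nu$ or $w^{-1}(\nu)$), which is exactly the role of the auxiliary clauses ``if $r = 0$ then $\nu > 0$'' and ``if $r = -\langle \lambda, \nu \rangle$ then $w^{-1}(\nu) < 0$.'' Verifying that these clauses correctly cover every overlap---in particular the degenerate point $r = \tfrac{\langle \nu, \mu \rangle}{-l}$ where cases (2) and (3) meet, and the converse direction showing no stray point is admitted---is the most delicate part of the argument.
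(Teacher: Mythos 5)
Your proposal follows essentially the same route as the paper: compute $x^{-1}(\alpha)$ explicitly via the action formula, translate $\alpha>0$ and $x^{-1}(\alpha)<0$ into sign conditions on the $\pi$-, $\delta$-, and finite parts, and combine the resulting inequalities on $j$ and $r$ into the three cases. The boundary bookkeeping you flag (the clauses at $r=0$, $r=-\langle\lambda,\nu\rangle$, and the intersection point $r=\tfrac{\langle\nu,\mu\rangle}{-l}$) is handled the same way in the paper, so the argument is correct as outlined.
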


\begin{proof}
For $\alpha = \nu + r\delta + j\pi \in \Phi$ to correspond to a point in $\Gamma_{x, \nu}$, we need both $\alpha > 0$ and $x^{-1}(\alpha)<0$.

For $\alpha = \nu + r\delta + j\pi > 0,$ we need one of the following:

\begin{enumerate}
\item $j > 0$
\item $j = 0,\ r > 0$
\item $j = 0 , \  r = 0, \nu > 0.$
\end{enumerate}

For $x^{-1}(\alpha) < 0,$ we need 
\begin{align*}
x^{-1}(\alpha) = & \ \tw^{-1}(\tal) + (j - \langle - \zeta, \tal \rangle ) \pi\\
= & \ w^{-1}(\nu) + (r + \langle \lambda, \nu \rangle) \delta + (j + \langle \mu, \nu \rangle + l r)\pi < 0,
\end{align*}

so we need one of the following:

\begin{enumerate}
\item $j < \langle - \zeta, \tal \rangle = - \langle \mu, \nu \rangle - l r$
\item $ j = \langle - \zeta, \tal \rangle,  \ r < - \langle \lambda, \nu \rangle$
\item $ j = \langle - \zeta, \tal \rangle, \ r = - \langle \lambda, \nu \rangle, w^{-1}(\nu) < 0.$

\end{enumerate}

Combining these results, we see that if $(r, j)$ is in the graph, then $0 \leq j \leq \langle - \zeta, \tal \rangle$. This tells us that $-\langle \zeta, \tal \rangle \geq0$ and since $-\langle \zeta, \tal \rangle = \langle -\mu -m\delta -l \Lambda_0, \nu + r\delta \rangle = -\langle \mu, \nu \rangle - lr$, we can solve for $r$ and get $r \leq \frac{\langle \nu, \mu \rangle}{- l}$. The point $(r, j) = ( \frac{\langle \nu, \mu \rangle}{- l}, 0)$ is the intersection point of $j = 0$ and $j = \langle - \zeta, \tal \rangle$. When $j = 0$, we can restrict $r$ to $0 \leq r \leq \frac{\langle \nu, \mu \rangle}{- l}$, and when $j = \langle - \zeta, \tal \rangle$, we can restrict to $r \leq \ $min$\{ \frac{\langle \nu, \mu \rangle}{- l}, - \langle \lambda, \nu \rangle \}$. 
\end{proof}

\begin{definition}
For simplicity, we will refer to the line segment of $j = 0$ that is included in the graph and the ray of $j = - \langle \zeta, \tal \rangle = - \langle \zeta, \nu + r\delta \rangle$ that is included in the graph as the \textbf{lower and upper outer edges} respectively. We will refer to the ray of $j = 1$ that is included in the graph and the ray of $j = - \langle \zeta, \nu + r\delta \rangle - 1$ that is included in the graph as the \textbf{lower and upper inner edges} respectively.
\end{definition}

\begin{prop} For a fixed $x \in W$ and $\nu \in \Phi_{\fin}$, there are 12 possible forms for $\Gamma_{x, \nu}$, and they are represented by the graphs below.\\

\begin{center} 
\begin{tikzpicture}[scale = .3]
\fill[blue!50!cyan, opacity=0.3] (-6,0) -- (0,0) -- (-6,4) -- cycle;
\draw [->, ultra thick] (0,0) -- (-6,4);
\draw [<-, ultra thick] [dashed] (-6,0) -- (0,0);
\draw[fill=white, radius = 1] (0,0) circle [radius = 0.25];
\end{tikzpicture}
\qquad
\begin{tikzpicture}[scale = .3]
\fill[blue!50!cyan, opacity=0.3] (-6,0) -- (0,0) -- (-6,4) -- cycle;
\draw [->, ultra thick] (-3,2) -- (-6,4);
\draw [ultra thick, dashed] (0,0) -- (-3, 2);
\draw [<-, ultra thick] [dashed] (-6,0) -- (0,0);
\draw[fill=white, radius = 1] (0,0) circle [radius = 0.25];
\draw[fill=white, radius = 1] (-3,2) circle [radius = 0.25];
\end{tikzpicture}
\qquad
\begin{tikzpicture}[scale = .3]
\fill[blue!50!cyan, opacity=0.3] (-6,0) -- (0,0) -- (-6,4) -- cycle;
\draw [->, ultra thick] (-3,2) -- (-6,4);
\draw [ultra thick, dashed] (0,0) -- (-3, 2);
\draw [<-, ultra thick] [dashed] (-6,0) -- (0,0);
\draw[fill=white, radius = 1] (0,0) circle [radius = 0.25];
\draw[fill=black, radius = 1] (-3,2) circle [radius = 0.25];
\end{tikzpicture}
\qquad
\begin{tikzpicture}[scale = .3]
\fill[blue!50!cyan, opacity=0.3] (-6,0) -- (0,0) -- (-6,4) -- cycle;
\draw [->, ultra thick] (0,0) -- (-6,4);
\draw [<-, ultra thick] [dashed] (-6,0) -- (-3,0);
\draw [ultra thick] (0,0) -- (-3, 0);
\draw[fill=white, radius = 1] (0,0) circle [radius = 0.25];
\draw[fill=white, radius = 1] (-3,0) circle [radius = 0.25];
\end{tikzpicture}

\vspace{3mm}

\begin{tikzpicture}[scale = .3]
\fill[blue!50!cyan, opacity=0.3] (-6,0) -- (0,0) -- (-6,4) -- cycle;
\draw [->, ultra thick] (-3,2) -- (-6,4);
\draw [ultra thick, dashed] (0,0) -- (-3, 2);
\draw [<-, ultra thick] [dashed] (-6,0) -- (-3,0);
\draw [ultra thick] (0,0) -- (-3, 0);
\draw[fill=white, radius = 1] (0,0) circle [radius = 0.25];
\draw[fill=white, radius = 1] (-3,2) circle [radius = 0.25];
\draw[fill=white, radius = 1] (-3,0) circle [radius = 0.25];
\end{tikzpicture}
\qquad
\begin{tikzpicture}[scale = .3]
\fill[blue!50!cyan, opacity=0.3] (-6,0) -- (0,0) -- (-6,4) -- cycle;
\draw [->, ultra thick] (-3,2) -- (-6,4);
\draw [ultra thick, dashed] (0,0) -- (-3, 2);
\draw [<-, ultra thick] [dashed] (-6,0) -- (-3,0);
\draw [ultra thick] (0,0) -- (-3, 0);
\draw[fill=white, radius = 1] (0,0) circle [radius = 0.25];
\draw[fill=black, radius = 1] (-3,2) circle [radius = 0.25];
\draw[fill=white, radius = 1] (-3,0) circle [radius = 0.25];
\end{tikzpicture}
\qquad
\begin{tikzpicture}[scale = .3]
\fill[blue!50!cyan, opacity=0.3] (-6,0) -- (0,0) -- (-6,4) -- cycle;
\draw [->, ultra thick] (0,0) -- (-6,4);
\draw [<-, ultra thick] [dashed] (-6,0) -- (-3,0);
\draw [ultra thick] (0,0) -- (-3, 0);
\draw[fill=white, radius = 1] (0,0) circle [radius = 0.25];
\draw[fill=black, radius = 1] (-3,0) circle [radius = 0.25];
\end{tikzpicture}
\qquad
\begin{tikzpicture}[scale = .3]
\fill[blue!50!cyan, opacity=0.3] (-6,0) -- (0,0) -- (-6,4) -- cycle;
\draw [->, ultra thick] (-3,2) -- (-6,4);
\draw [ultra thick, dashed] (0,0) -- (-3, 2);
\draw [<-, ultra thick] [dashed] (-6,0) -- (-3,0);
\draw [ultra thick] (0,0) -- (-3, 0);
\draw[fill=white, radius = 1] (0,0) circle [radius = 0.25];
\draw[fill=white, radius = 1] (-3,2) circle [radius = 0.25];
\draw[fill=black, radius = 1] (-3,0) circle [radius = 0.25];
\end{tikzpicture}

\vspace{3mm}

\begin{tikzpicture}[scale = .3]
\fill[blue!50!cyan, opacity=0.3] (-6,0) -- (0,0) -- (-6,4) -- cycle;
\draw [->, ultra thick] (-3,2) -- (-6,4);
\draw [ultra thick, dashed] (0,0) -- (-3, 2);
\draw [<-, ultra thick] [dashed] (-6,0) -- (-3,0);
\draw [ultra thick] (0,0) -- (-3, 0);
\draw[fill=white, radius = 1] (0,0) circle [radius = 0.25];
\draw[fill=black, radius = 1] (-3,2) circle [radius = 0.25];
\draw[fill=black, radius = 1] (-3,0) circle [radius = 0.25];
\end{tikzpicture}
\qquad
\begin{tikzpicture}[scale = .3]
\fill[blue!50!cyan, opacity=0.3] (-6,0) -- (0,0) -- (-6,4) -- cycle;
\draw [->, ultra thick] (0,0) -- (-6,4);
\draw [<-, ultra thick] [dashed] (-6,0) -- (-3,0);
\draw [ultra thick] (0,0) -- (-3, 0);
\draw[fill=black, radius = 1] (0,0) circle [radius = 0.25];
\draw[fill=white, radius = 1] (-3,0) circle [radius = 0.25];
\end{tikzpicture}
\qquad
\begin{tikzpicture}[scale = .3]
\fill[blue!50!cyan, opacity=0.3] (-6,0) -- (0,0) -- (-6,4) -- cycle;
\draw [->, ultra thick] (0,0) -- (-6,4);
\draw [<-, ultra thick] [dashed] (-6,0) -- (0,0);
\draw[fill=black, radius = 1] (0,0) circle [radius = 0.25];
\end{tikzpicture}
\qquad
\begin{tikzpicture}[scale = .3]
\fill[blue!50!cyan, opacity=0.3] (-6,0) -- (0,0) -- (-6,4) -- cycle;
\draw [->, ultra thick] (0,0) -- (-6,4);
\draw [<-, ultra thick] [dashed] (-6,0) -- (-3,0);
\draw [ultra thick] (0,0) -- (-3, 0);
\draw[fill=black, radius = 1] (0,0) circle [radius = 0.25];
\draw[fill=black, radius = 1] (-3,0) circle [radius = 0.25];
\end{tikzpicture}
\end{center}

\end{prop}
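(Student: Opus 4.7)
The plan is to apply the characterization of $\Gamma_{x,\nu}$ from the preceding proposition directly. Every graph consists of the same three ingredients: the open triangular interior described by Case (1), together with the lower outer edge on $j = 0$ from Case (2) and the upper outer edge on $j = -\langle \zeta, \nu + r\delta\rangle$ from Case (3). The interior region is always fully present (no exclusion conditions arise there), so the twelve distinct pictures can come only from how the two outer edges are truncated and which boundary lattice points are filled or open.

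Setting $N := \langle \nu, \mu\rangle/(-l)$ and $M := -\langle \lambda, \nu\rangle$, I would organize the enumeration around four essentially independent choices: (a) whether $\nu > 0$ or $\nu < 0$, which dictates whether the origin is a filled or an open point (Case 2 applied at $r = 0$); (b) whether the lower outer edge contains any point with $r \geq 1$, i.e.\ whether $\lfloor N \rfloor \geq 1$, which decides whether a nontrivial lower segment appears; (c) whether the rightmost lattice point of the upper outer edge coincides with the rightmost lattice point of the lower segment (which forces $N$ to be a nonnegative integer with $N \leq M$) or whether the upper edge is detached from the lower edge, in which case either $M < N$ or $N \notin \Z_{\geq 0}$ governs where the upper ray terminates; and (d) when $r = M$ is in fact the active bound on the upper edge, the sign of $w^{-1}(\nu)$ determines whether this terminal lattice point is included.

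Running through the realizable combinations of these choices produces exactly the twelve configurations drawn. For example, the first diagram arises when $\nu < 0$ and $\lfloor N \rfloor \leq 0$, so the lower segment is absent and the upper ray emanates from the open origin; the eleventh is the same except $\nu > 0$, filling the origin; the fourth has a nontrivial lower segment whose right endpoint lies strictly to the left of the line $j = -\langle\zeta, \nu + r\delta\rangle$ (so the two outer edges share no lattice point) together with $\nu < 0$; the twelfth corresponds to $\nu > 0$ together with a nontrivial lower segment ending exactly at the slanted boundary under the additional condition $w^{-1}(\nu) < 0$, producing two filled circles; and the remaining diagrams are analogous permutations of these choices. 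I would present this as a short table listing, for each of the twelve graphs, which sign patterns of $\nu$, $w^{-1}(\nu)$ and which inequalities on $N$ and $M$ realize it.

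The main obstacle is purely bookkeeping: one must check both that each of the twelve schematic shapes is attained by some admissible choice of $\mu$, $\lambda$, $w$ (so no picture is spurious), and that the characterization in the preceding proposition cannot produce any additional shape. The subtlest subcase is when $N \notin \Z$, since then the rightmost lattice point on the lower segment is not on the slanted boundary, which is precisely what creates the diagrams (such as the second, third, fifth, sixth, eighth, and ninth) in which the upper edge is detached from the lower edge along the interior of the slanted line rather than meeting it at a shared lattice corner.
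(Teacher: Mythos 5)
Your proposal is correct and follows essentially the same route as the paper: both arguments reduce the classification to a bookkeeping enumeration of how the two outer edges are truncated and which of their endpoint lattice points are included, using the conditions from the preceding proposition (the sign of $\nu$ at $r=0$, the sign of $w^{-1}(\nu)$ at $r = -\langle\lambda,\nu\rangle$, and the relative positions of $\frac{\langle\nu,\mu\rangle}{-l}$ and $-\langle\lambda,\nu\rangle$). The paper packages the same data as lower-edge types L1, L1*, L2, L3, L4, L4* and upper-edge types U1--U4, with the count $3\times 3 + 3\times 1 = 12$ coming from the compatibility requirement that the two edges agree on whether their shared intersection point is included --- exactly the role played by your choice (c).
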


\begin{proof}
Because the lower outer edge will be the line segment $j = 0$ with $0 \leq r \leq \frac{\langle \nu, \mu \rangle}{- l}$ and endpoints possibly not included, there are four possibilities:

\begin{figure}[H]
\begin{tikzpicture}[scale = .35]
\draw [ultra thick] (0,0) -- (6,0) node[below, yshift=-3mm, xshift = -10mm] {L1};
\draw[fill=white, radius = 1] (0,0) circle [radius = 0.2];
\draw[fill=white, radius = .5] (6,0) circle [radius = 0.2];
\end{tikzpicture}
\qquad
\begin{tikzpicture}[scale = .35]
\draw [ultra thick] (0,0) -- (6,0) node[below, yshift=-3mm, xshift = -10mm] {L2};
\draw[fill=black, radius = 1] (0,0) circle [radius = 0.2];
\draw[fill=white, radius = .5] (6,0) circle [radius = 0.2];
\end{tikzpicture}
\qquad
\begin{tikzpicture}[scale = .35]
\draw [ultra thick] (0,0) -- (6,0) node[below, yshift=-3mm, xshift = -10mm] {L3};
\draw[fill=white, radius = 1] (0,0) circle [radius = 0.2];
\draw[fill=black, radius = .5] (6,0) circle [radius = 0.2];
\end{tikzpicture}
\qquad
\begin{tikzpicture}[scale = .35]
\draw [ultra thick] (0,0) -- (6,0) node[below, yshift=-3mm, xshift = -10mm] {L4};
\draw[fill=black, radius = 1] (0,0) circle [radius = 0.2];
\draw[fill=black, radius = .5] (6,0) circle [radius = 0.2];
\end{tikzpicture}
\qquad
\end{figure}

Note that the first two types do not include the intersection point of $j = 0$ and $j = - \langle \zeta, \tal \rangle$ (represented by the right endpoint), but the last two types do. Also note that type L1 and type L4 may or may not include the line segment between the endpoints. If the line segment is not included, we will refer to these as type L1* or L4* respectively.

Now we will look at the possibilities for the upper outer edge given by $j = - \langle \zeta, \tal \rangle$ with $r \leq \ $min$\{ \frac{\langle \nu, \mu \rangle}{- l}, - \langle \lambda, \nu \rangle \}$ and endpoint possibly not included:

\begin{figure}[H]
\begin{tikzpicture}[scale = .35]
\draw [ultra thick, dashed] (3,0) -- (6,0) node[below, yshift=-3mm, xshift = -10mm] {U1};
\draw [<-, ultra thick] (0,0) -- (3,0);
\draw[fill=white, radius = 1] (3,0) circle [radius = 0.2];
\draw[fill=white, radius = .5] (6,0) circle [radius = 0.2];
\end{tikzpicture}
\qquad
\begin{tikzpicture}[scale = .35]
\draw [ultra thick, dashed] (3,0) -- (6,0) node[below, yshift=-3mm, xshift = -10mm] {U2};
\draw [<-, ultra thick] (0,0) -- (3,0);
\draw[fill=black, radius = 1] (3,0) circle [radius = 0.2];
\draw[fill=white, radius = .5] (6,0) circle [radius = 0.2];
\end{tikzpicture}
\qquad
\begin{tikzpicture}[scale = .35]
\draw [<-, ultra thick] (0,0) -- (6,0) node[below, yshift=-3mm, xshift = -10mm] {U3};
\draw[fill=white, radius = .5] (6,0) circle [radius = 0.2];
\end{tikzpicture}
\qquad
\begin{tikzpicture}[scale = .35]
\draw [<-, ultra thick] (0,0) -- (6,0) node[below, yshift=-3mm, xshift = -10mm] {U4};
\draw[fill=black, radius = .5] (6,0) circle [radius = 0.2];
\end{tikzpicture}
\end{figure}

Note that the first three types do not include the intersection point of $j = 0$ and $j = - \langle \zeta, \tal \rangle$ (represented by the right point). These upper outer edges will match with the lower outer edges of type L1, L1*, and L2. The only upper outer edge containing the intersection point is of type U4, so this will match with the lower outer edges of type L3, L4, and L4*. In total, this gives 12 possibilities for the graph.
\end{proof}


\subsection{Corners}

Recall that we are interested in determining which $\alpha$ of $\Gamma_{x,\fin(\alpha)}$ correspond to cocovers (meaning $y = s_\alpha x$ is a cocover of $x$). To do this, we must examine specific $(r, j) \in \Gamma_{x, \fin(\alpha)}$.

\begin{definition}
For double affine roots $\alpha = \nu + r\delta + j\pi$ and $\beta = \nu + p \delta + q\pi$, define $\beta_{\alpha}^{-}$ to be the root found by rotating $(p,q)$ 180 degrees about $(r,j)$. 
\end{definition}

\begin{prop}
If $\beta$ and $\alpha$ are double affine roots such that $\fin(\alpha) = \fin(\beta)$, then $\beta_{\alpha}^{-} = -s_{\alpha}\beta$.
\end{prop}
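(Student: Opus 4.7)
The plan is a direct computation comparing the two sides.

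First I would unpack the definition of $\beta_\alpha^-$. Writing $\alpha = \nu + r\delta + j\pi$ and $\beta = \nu + p\delta + q\pi$, a 180-degree rotation of the lattice point $(p,q)$ about $(r,j)$ sends it to $(2r-p,\, 2j-q)$. Since the finite part is preserved under the rotation (we only move in the $(\delta,\pi)$-plane at fixed $\nu$), this gives
\[
\beta_\alpha^- = \nu + (2r-p)\delta + (2j-q)\pi.
\]

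Next I would compute $-s_\alpha\beta$ using the reflection formula $s_\alpha(\beta) = \beta - \langle \alpha, \beta\rangle \alpha$ established in the proposition just before this one. Tracing through the pairing used in that proposition, one sees that $\langle \alpha,\beta\rangle$ reduces to the finite pairing between the finite parts, i.e.\ $\langle \alpha,\beta\rangle = \langle \fin(\alpha),\fin(\beta)\rangle$. Under the hypothesis $\fin(\alpha)=\fin(\beta)=\nu$, and using the simply-laced assumption so that $\langle \nu,\nu\rangle = 2$, we obtain $\langle \alpha,\beta\rangle = 2$. Hence
\[
s_\alpha\beta = \beta - 2\alpha = -\nu + (p-2r)\delta + (q-2j)\pi,
\]
so
\[
-s_\alpha\beta = \nu + (2r-p)\delta + (2j-q)\pi,
\]
which matches $\beta_\alpha^-$ exactly.

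There is no real obstacle here; the only point to be careful about is confirming that $\langle \alpha,\beta\rangle = 2$ in the double affine pairing, which is immediate from the reduction of that pairing to the finite pairing on finite parts (as already made explicit in the proof of the reflection formula above) together with the simply-laced normalization $\langle \nu,\nu\rangle=2$. So the proof is essentially a two-line verification after setting up notation.
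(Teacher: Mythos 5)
Your proposal is correct and follows essentially the same route as the paper: compute the rotation explicitly to get $\nu + (2r-p)\delta + (2j-q)\pi$, and compute $-s_\alpha\beta = -\beta + \langle\alpha,\beta\rangle\alpha$ using the reduction of the pairing to $\langle\nu,\nu\rangle = 2$ on the common finite part. No substantive differences.
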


\begin{proof}
Let $\alpha = \nu + r\delta + j\pi$ and $\beta = \nu + p\delta + j\pi$. Then 
\begin{align*}
-s_{\alpha}(\beta) & = -\beta + \langle \beta, \alpha \rangle \alpha\\
& = -\beta + \langle \nu, \nu \rangle \alpha\\
& = -\beta + 2 \alpha\\
& = \nu + (2r - p)\delta + (2j - q)\pi.
\end{align*}

The root $\beta_{\alpha}^{-}$ is equal to $\nu + p'\delta + q'\pi$ where $(p',q')$ is the result of rotating $(p, q)$ 180 degrees about $(r, j)$. To determine $(p', q')$, first shift so that we are rotating about the center: $(p, q) \rightarrow (p-r, q-j)$ and $(r, j) \rightarrow (0, 0)$, then reflect over the $x$ and $y$ axes: $(p-r, q-j) \rightarrow (-p + r, -q + j)$, and now shift back to original orientation: $(0, 0) \rightarrow (r, j)$ and $(-p + r, -q + j) \rightarrow (-p +2r, -q + 2j) = (2r - p, 2j - q)$.

So $(p', q') = (2r - p, 2j - q)$ and $\beta_{\alpha}^{-} = \nu + (2r - p)\delta + (2j - q) \pi = -s_{\alpha}(\beta).$
\end{proof}
  
\begin{definition}
We say that $\alpha$ is a \textbf{corner of the graph }$\Gamma_{x,\fin(\alpha)}$, or a \textbf{corner relative to $x$}, if $\alpha$ corresponds to a point in $\Gamma_{x, \fin(\alpha)},$ and if for any $\beta = \fin(\alpha) + p\delta + q\pi$ corresponding to a point in $\Gamma_{x, \fin(\alpha)}$, $\beta_{\alpha}^{-}$ is not in the graph.
\end{definition}

 \begin{ex}\label{ex: 2}
  
Consider $W_{\aff}$ of type $\widetilde{A}_2,$ $x = X^{\alpha_1 + \alpha_2+\delta + \Lambda_0}Y^{\alpha_2}$ (the same choices from Example \ref{ex: 1}).
  
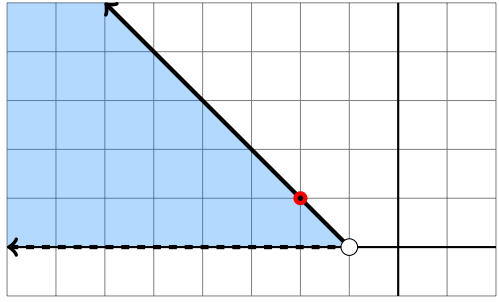
\begin{figure}[H]
\begin{tikzpicture}[scale = .65]
\draw[help lines] (-8,-1) grid (2,5);
\fill[blue!50!cyan, opacity=0.3] (-8,0) -- (-1,0) -- (-6,5) -- (-8, 5) -- cycle;
\draw [->, ultra thick] (-1,0) -- (-6,5);
\draw [<-, ultra thick] [dashed] (-8,0) -- (-1,0);
\draw[thick](-8,0)--(2,0);
\draw[thick](0,5)--(0,-1);
\draw[fill=white, radius = .1] (-1,0) circle [radius = 0.17];
\draw[fill=black, radius = .08] (-2,1) circle [radius = 0.1];
\draw [red, ultra thick] (-2,1) circle [radius=0.1];
\end{tikzpicture}
\caption{$\Gamma_{x, \alpha_1}$ with highlighted corner} \label{fig:alp1}
\end{figure}

With this setup, $\alpha = \alpha_1 - 2\delta + \pi$ corresponds to a corner of $\Gamma_{x,\alpha_1}.$

\end{ex}

\begin{prop}
If $y = s_{\alpha}x$ is a cocover of $x$, then $\alpha$ must correspond to a corner in the graph $\Gamma_{x, \fin(\alpha)}$.
\end{prop}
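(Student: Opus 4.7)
The plan is to prove the contrapositive: if $\alpha$ does not correspond to a corner of $\Gamma_{x,\fin(\alpha)}$, then $y = s_\alpha x$ is not a cocover of $x$. Since $y = s_\alpha x \leq x$ presumes $\alpha \in \Gamma_{x,\fin(\alpha)}$ (equivalently $\alpha > 0$ and $x^{-1}(\alpha) < 0$), the failure of the corner condition must come from the second clause of the definition: there exists some $\beta = \fin(\alpha) + p\delta + q\pi$ with $\beta \neq \alpha$ corresponding to a point of $\Gamma_{x,\fin(\alpha)}$ such that $\beta^{-}_\alpha$ also lies in the graph.

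Next, invoke the already-proven identification $\beta^{-}_\alpha = -s_\alpha(\beta)$, valid because $\fin(\alpha) = \fin(\beta)$. A short calculation shows that $\fin(-s_\alpha(\beta)) = -\fin(\beta) + 2\fin(\alpha) = \fin(\alpha) = \fin(\beta)$, so $-s_\alpha(\beta)$ lives in the same graph $\Gamma_{x,\fin(\alpha)} = \Gamma_{x,\fin(-s_\alpha(\beta))}$. Hence the hypotheses of Proposition \ref{prop: 7} are satisfied for $\beta$, yielding $\beta \in L_{x,\alpha}$.

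Since $\alpha$ itself always lies in $L_{x,\alpha}$ whenever $s_\alpha x \leq x$ and we have produced a second element $\beta \neq \alpha$ in $L_{x,\alpha}$, we conclude $|L_{x,\alpha}| \geq 2$. By the theorem of Muthiah--Orr recalled at the beginning of this section, $\ell(x) - \ell(y) = |L_{x,\alpha}| \geq 2$, so $y$ cannot be a cocover of $x$. This proves the contrapositive, completing the argument.

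The only subtle point is the implicit clause $\beta \neq \alpha$ in the corner definition (taking $\beta = \alpha$ gives $\beta^{-}_\alpha = \alpha$, which is always in the graph, so the quantifier must be read as ranging over $\beta \neq \alpha$ for the notion to be nonvacuous). Once that is granted, the proof is a direct combination of the characterization of cocovers via $L_{x,\alpha}$ and the graph-theoretic reformulation of membership in $L_{x,\alpha}$ already established in Proposition \ref{prop: 7}.
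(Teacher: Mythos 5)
Your proof is correct and follows essentially the same route as the paper's: argue the contrapositive, use $\beta_\alpha^- = -s_\alpha(\beta)$ together with Proposition \ref{prop: 7} to place a second element $\beta \neq \alpha$ in $L_{x,\alpha}$, and conclude the length drop is at least $2$. Your parenthetical about reading the corner definition with the implicit restriction $\beta \neq \alpha$ is a fair and accurate observation, and the paper's own proof quantifies over $\beta \neq \alpha$ in exactly the way you suggest.
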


\begin{proof}
Suppose $\alpha$ is not a corner of $\Gamma_{x, \fin(\alpha)}$. Then there is some $\beta$ with $\fin(\beta) = \fin(\alpha)$ such that $\beta \neq \alpha$, $\beta \in \Gamma_{x, \fin(\alpha)},$ and $\beta_{\alpha}^- \in \Gamma_{x, \fin(\alpha)}$. But $\beta_{\alpha}^- = -s_{\alpha}(\beta)$, so by Proposition \ref{prop: 7},
$\beta \in L_{x, \alpha}$. So $|L_{x,\alpha}| > 1$, and $y$ is not a cocover of $x$. 
\end{proof}

\begin{rem}
In general, the set of corners will be larger than the set of roots corresponding to cocovers of a fixed $x \in W$. Consider $W_{\aff} $ of type $\widetilde{A}_2,$ $x = X^{\alpha_1 + \alpha_2+\delta + \Lambda_0}Y^{\alpha_2}$, $\alpha = \alpha_1 - 2\delta + \pi$. Then $\alpha$ corresponds to a corner of $\Gamma_{x, \alpha_1},$ as shown in Example \ref{ex: 2}, but $s_{\alpha}x$ is not a cocover of $x$ (we saw in Example \ref{ex: 1} that the length difference set contains 3 elements).
\end{rem}

Now we would like to show that there are finitely many $\alpha$ that are corners relative to a fixed $x$, but before we do, we need to make some observations about the graphs.

Fix $\nu \in \Phi_{\fin}$ and $x \in W$. Then:

\begin{itemize}

\item If $(r, j)$ is a point of $\Gamma_{x, \nu}$ and $j \neq 0$, then $(p, j)$ is in $\Gamma_{x, \nu}$ for all $p < r$. This is true because if $(r, j)$ is a point in $\Gamma_{x, \nu}$ with $j \neq 0$, then the only possible bound on $r$ is the upper bound $r \leq \ $min$\{ \frac{\langle \nu, \mu \rangle}{- l}, - \langle \lambda, \nu \rangle \}$. So if $p < r$, then $p <$ min$\{ \frac{\langle \nu, \mu \rangle}{- l}, - \langle \lambda, \nu \rangle \}$ and $(p, j)$ is in $\Gamma_{x, \nu}$. 

\item The upper outer edge falls on the line $y = -\langle \zeta, \nu + x\delta \rangle = - \langle \mu, \nu \rangle - x l$. The slope of this line is $-l$, which is an integer (specifically, $l = $lev$(\zeta)$), and for any $r \in \Z$, $j = - \langle \zeta, \nu + r\delta \rangle$ is also an integer because $- \langle \zeta, \nu + r\delta \rangle = - \langle \mu, \nu \rangle - r l$ where $r, l, \langle \mu, \nu \rangle \in \mathbb{Z}$.

\item Let $L_0$ represent the line given by $y = -\langle \zeta, \nu + x\delta \rangle$. Let $(r, j)$ be a point of $L_0$ that falls in $\Gamma_{x, \nu}$. Then $r \leq \ $min$\{ \frac{\langle \nu, \mu \rangle}{- l}, - \langle \lambda, \nu \rangle \}$. Additionally, for any $(p, q)$ of $L_0$ with $p < r$, $(p, q)$ is in $\Gamma_{x, \nu}$. This is true because $(p, q)$ satisfies $q = -\langle \zeta, \nu + p\delta \rangle$ and $p < r  \leq \ $min$\{ \frac{\langle \nu, \mu \rangle}{- l}, - \langle \lambda, \nu \rangle \}$.

\item Let $L_k$ represent the line given by $y = -\langle \zeta, \nu + x\delta \rangle - k$, where $k$ is a positive integer. Let $(r, j)$ be a point on $L_k$ that falls in the graph. Then $j \geq 0$, and for any $(p, q)$ of $L_k$ with $p < r$, $(p, q)$ is in $\Gamma_{x, \nu}$. This is true because $p < r$ means $q > j$ (since the slope of $L_k$ is negative), so $0 \leq j < q < -\langle \zeta, \nu + p\delta \rangle,$ and $(p, q)$ is a point of $\Gamma_{x,\nu}$.

\end{itemize}

\begin{prop}
Fix $x \in W$ and $\nu \in \Phi_\fin$. The number of corners of $\Gamma_{x, \nu}$ is finite.
\end{prop}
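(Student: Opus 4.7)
The plan is to show that every corner of $\Gamma_{x,\nu}$ lies in a bounded subset of $\mathbb{Z}^2$, so that there can only be finitely many. The intuition, motivated by the four observations preceding the proposition, is that $\Gamma_{x,\nu}$ extends infinitely to the left both along its interior rows and along its upper outer edge; hence any point $\alpha$ sufficiently far into this unbounded region should admit a neighbor $\beta$ to its left in $\Gamma_{x,\nu}$ whose reflection $\beta_\alpha^- = 2\alpha - \beta$ also lies in $\Gamma_{x,\nu}$, witnessing that $\alpha$ is not a corner. Throughout, set $J(r) = -\langle \zeta, \nu + r\delta \rangle$ so that the upper outer edge lies on $j = J(r)$.

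The argument splits into three cases according to where $\alpha = (r_0, j_0)$ sits. If $j_0 = 0$, then $\alpha$ lies on the lower outer edge, which is the finite segment $0 \leq r_0 \leq \frac{\langle \nu, \mu \rangle}{-l}$, so only finitely many candidates arise. If $j_0 = J(r_0)$ so that $\alpha$ lies on the upper outer edge, I would take $\beta = (r_0 - 1, J(r_0 - 1))$, which is in $\Gamma_{x,\nu}$ by the third observation (about $L_0$). Computing $\beta_\alpha^- = (r_0 + 1, J(r_0 + 1))$ shows this reflection also lies on $L_0$ and belongs to $\Gamma_{x,\nu}$ whenever $r_0 + 1 \leq \min\{\frac{\langle \nu, \mu \rangle}{-l}, -\langle \lambda, \nu \rangle\}$. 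Hence a corner on the upper outer edge forces $r_0$ within one step of this maximum, again leaving only finitely many candidates.

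The delicate case is when $\alpha$ is strictly interior, $0 < j_0 < J(r_0)$, and here two choices of $\beta$ suffice. First, take $\beta = (r_0 - 1, j_0)$, which lies in $\Gamma_{x,\nu}$ by the first observation since $j_0 > 0$. Then $\beta_\alpha^- = (r_0 + 1, j_0)$ belongs to $\Gamma_{x,\nu}$ whenever $j_0 \leq J(r_0 + 1) = J(r_0) - l$, so corner candidates must satisfy $j_0 > J(r_0) - l$. Second, take $\beta = (r_0 - 1, j_0 + l)$, which is strictly interior at column $r_0 - 1$ since $j_0 + l < J(r_0) + l = J(r_0 - 1)$; then $\beta_\alpha^- = (r_0 + 1, j_0 - l)$ is strictly interior at column $r_0 + 1$ provided $j_0 > l$, so corners must also satisfy $j_0 \leq l$. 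Combining yields $J(r_0) < 2l$, which together with the requirement $J(r_0) \geq 2$ (needed for a strict interior point to exist at column $r_0$) pins $r_0$ into a bounded range and $j_0$ into $\{1, \ldots, l\}$.

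I expect the main technical obstacle to be tracking the endpoint conditions on the outer edges—whether the lattice points at $r = 0$ and $r = -\langle \lambda, \nu \rangle$ actually belong to $\Gamma_{x,\nu}$, which depends on the signs of $\nu$ and $w^{-1}(\nu)$—and verifying that the specific choices of $\beta$ above remain in $\Gamma_{x,\nu}$ in each sub-case. However, these corrections only affect a bounded handful of lattice points, so the finiteness conclusion survives.
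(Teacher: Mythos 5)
Your argument is correct in outline and establishes the proposition, but it diverges from the paper's proof in the interior case, and the difference matters downstream. Both proofs treat the two outer edges the same way (the lower outer edge is a finite segment; a corner on the upper outer edge is forced to sit at, or within one step of, the maximal admissible $r$, since otherwise a point further along the diagonal reflects back into the graph along the same diagonal). For interior points, however, the paper uses the \emph{vertical} pair $\beta = (r', j'-1)$, $\beta_\alpha^- = (r', j'+1)$ to show that no point strictly between the two inner edges ($1 < j' < -\langle\zeta,\nu+r'\delta\rangle - 1$) can be a corner at all, and then handles the two inner edges $j'=1$ and $j' = -\langle\zeta,\nu+r'\delta\rangle - 1$ separately, each contributing a single maximal-$r$ candidate. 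Your route instead uses a horizontal pair and a pair along a line of slope $-l$ to confine interior corners to the box $J(r_0) - l \le j_0 \le l$, hence $2 \le J(r_0) \le 2l$. This is a valid and genuinely different way to get finiteness, but it is weaker in two respects: it allows up to order $l$ interior candidates rather than two, and --- more importantly --- it does not show that every corner lies on one of the four distinguished edges $j \in \{0, 1, -\langle\zeta,\tal\rangle, -\langle\zeta,\tal\rangle - 1\}$. That sharper statement is exactly Corollary \ref{corner types}, which the paper extracts from this proof and then uses to reduce Theorem \ref{thm: 2nd} to four cases; your argument would need the paper's vertical-pair step added back in to recover it. The endpoint subtleties you flag (whether the lattice points at $r = \frac{\langle\nu,\mu\rangle}{-l}$ and $r = -\langle\lambda,\nu\rangle$ belong to the graph, and whether the reflected point lands on an outer edge rather than strictly inside) are real but, as you say, only perturb the candidate set by finitely many points, and the paper glosses over them similarly.
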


Idea: We show that if $\alpha = \nu + r\delta + j\pi$ corresponds to a corner relative to $x$, then $\alpha$ must fall on one of the two outer edges or one of the two inner edges of $\Gamma_{x, \nu}$. But on these edges, only the $(r, j) \in \Z^2$ closest to endpoints can be corners.

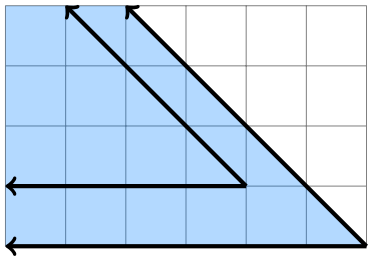
\begin{figure}[H]
\begin{tikzpicture}[scale = .8]
\draw[help lines] (-6,-0) grid (0,4);
\fill[blue!50!cyan, opacity=0.3] (-4,4) -- (0,0) -- (-6,0) -- (-6,4) -- cycle;
\draw [->, ultra thick] (0,0) -- (-4,4);
\draw [->, ultra thick] (0,0) -- (-6,0);
\draw [->, ultra thick] (-2,1) -- (-6,1);
\draw [->, ultra thick] (-2,1) -- (-5,4);
\end{tikzpicture}
\caption{Inner and Outer Edges of a General $\Gamma_{x, \nu}$} \label{fig:cocovers}
\end{figure}

\begin{proof}
We break the proof into several cases. Let $(r',j')$ represent a corner.

Case 1: Assume $j' = 0$. Then $(r', j')$ falls along the lower outer edge, which is either a line segment or a single point. In either case, there are finitely many possibilities for $(r', j')$. 

Case 2: Assume $(r', j')$ falls along the upper outer edge (the diagonal $j = - \langle \zeta, \nu + r\delta \rangle$). Then any other $(r, j) \in \Gamma_{x,\nu}$ on the outer upper edge must have $r < r'$. If there exists some $(r,j)$ on the graph's upper outer edge such that $r > r'$, then it can be rotated 180 degrees about $(r', j')$ and end up in the graph (because it will land on the diagonal and be higher up than $(r', j')$), which contradicts the fact that $(r', j')$ is a corner. So there is only one possibility for $(r', j')$. 

Case 3: Assume $(r', j')$ falls along the upper inner edge (the diagonal given by $j = - \langle \zeta, \nu + r\delta \rangle - 1$). Then using the same logic from above, $(r', j')$ must have largest possible $r',$ so there is only one possibility for $(r', j')$.

Case 4: Assume $j' = 1$. Again, $r'$ must be maximal. Suppose $(r, 1)$ is another point of the graph such that $r > r'$. Then $(r,1)$ rotated 180 degrees about $(r', 1)$ results in some $(p, 1)$ with $p < r'$. This would mean that $(p, 1)$ is in the graph, but that contradicts the fact that $(r', j')$ is a corner.

Case 5: Assume $(r', j')$ does not lie on any of the outer or inner edges. Then $1 < j' < \langle -\mu, \nu \rangle - r'l - 1$, so $1 \leq j' - 1 < j' < \langle -\mu, \nu \rangle - r'l - 1$, and $(r', j' - 1)$ is a point of the graph. Additionally, $1 < j'  < j' + 1 \leq \langle -\mu, \nu \rangle - r'l -1$, so $(r', j' + 1)$ is also a point on the graph. Thus $(r', j')$ cannot be a corner. 

To be a corner, $(r',j')$ must fall along one of the two outer edges or one of the two inner edges. On those edges there are finitely many possibilities for corners. Thus for any given $x$ and $\nu,$ the corresponding graph $\Gamma_{x, \nu}$ contains finitely many corners. 
\end{proof}

\begin{cor}
The number of cocovers of $x$ is finite. 
\end{cor}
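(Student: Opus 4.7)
The plan is to combine the preceding proposition --- which asserts that any root giving a cocover corresponds to a corner of some $\Gamma_{x, \nu}$ --- with the finiteness of $\Phi_{\fin}$.

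First I would recall that every cocover $y$ of $x$ is of the form $y = s_\alpha x$ for some positive double affine root $\alpha$ satisfying $x^{-1}(\alpha) < 0$: the Bruhat order is generated by reflection relations, and a cocover must arise from a single such generating step. In particular, $\alpha$ has a well-defined finite part $\fin(\alpha) \in \Phi_{\fin}$ and corresponds to a point of $\Gamma_{x, \fin(\alpha)}$. By the proposition immediately preceding the corollary, such an $\alpha$ must in fact correspond to a corner of $\Gamma_{x, \fin(\alpha)}$.

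Second, I would decompose the set of roots producing cocovers by finite part:
\[
\{\alpha : s_\alpha x \text{ is a cocover of } x\} \;\subseteq\; \bigcup_{\nu \in \Phi_{\fin}} \{\alpha : \fin(\alpha) = \nu \text{ and } \alpha \text{ is a corner of } \Gamma_{x, \nu}\}.
\]
Since $\Phi_{\fin}$ is finite and each $\Gamma_{x, \nu}$ contains only finitely many corners (by the preceding proposition), the right-hand side is a finite union of finite sets, hence finite. The map $\alpha \mapsto s_\alpha x$ from this set onto the set of cocovers of $x$ then has finite domain, so the set of cocovers is finite.

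This argument is essentially a direct packaging of the corner finiteness result with the finiteness of $\Phi_{\fin}$, and I do not anticipate a substantive obstacle. The only mild subtlety is the first observation --- that every cocover is captured as $s_\alpha x$ for some positive $\alpha$ with $x^{-1}(\alpha) < 0$ --- which follows from the definition of the double affine Bruhat order together with the preorder/order equivalence and the left-multiplication convention established earlier in the paper.
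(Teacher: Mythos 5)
Your proposal is correct and follows essentially the same route as the paper: cocovers must correspond to corners of some $\Gamma_{x,\nu}$, each such graph has finitely many corners, and $\Phi_{\fin}$ is finite, so the union over $\nu$ is finite. The extra care you take in noting that every cocover arises as $s_\alpha x$ for a positive $\alpha$ with $x^{-1}(\alpha)<0$ is a detail the paper leaves implicit, but it does not change the argument.
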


\begin{proof}
Fix $x$. For any $\nu \in \Phi_{\fin},$ the graph $\Gamma_{x, \nu}$ has finitely many corners. So there are finitely many $\alpha = \nu + r\delta + j \pi$ such that $y = s_{\alpha} x$ is a cocover of $x$. Since $\nu$ is a finite root, there are finitely many possibilities for $\nu$. So there are finitely many cocovers for a given $x$. 
\end{proof}

\begin{cor}\label{fin} 
Let $x, y \in W$ such that $y \leq x$. Then the double affine Bruhat interval $[y, x]$ will be finite. 
\end{cor}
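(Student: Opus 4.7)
The plan is to proceed by strong induction on $n := \ell(x) - \ell(y)$, relying on the preceding corollary (finiteness of the set of cocovers of $x$) and on the strict monotonicity of $\ell$ with respect to the Bruhat order. This monotonicity follows from the definition of the order via reflection relations with weakly increasing length: any chain of such relations in which the lengths are all equal must collapse to a single element by the antisymmetry of the order proved in \cite{M}, so $z < x$ forces $\ell(z) < \ell(x)$. With this in hand, the base cases $n=0$ and $n=1$ are immediate, yielding $[y,x] = \{x\}$ and $[y,x] = \{y,x\}$ respectively.

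For the inductive step, fix $y \leq x$ with $n \geq 2$ and assume the result for all smaller length differences. Let $\mathcal{C}_x$ denote the (finite) set of cocovers of $x$, and set $\mathcal{C}_x^y := \{x' \in \mathcal{C}_x : y \leq x'\}$. Each $x' \in \mathcal{C}_x^y$ has $\ell(x') - \ell(y) = n-1 < n$, so $[y,x']$ is finite by the inductive hypothesis. I then aim to establish the decomposition
\begin{equation*}
[y,x] \;=\; \{x\} \;\cup\; \bigcup_{x' \in \mathcal{C}_x^y}\, [y,x'],
\end{equation*}
which exhibits $[y,x]$ as a finite union of finite sets.

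The only nontrivial direction is $\subseteq$: for $z \in [y,x]$ with $z < x$, I must produce $x' \in \mathcal{C}_x$ with $z \leq x'$, for then $y \leq z \leq x'$ places $x'$ in $\mathcal{C}_x^y$ and $z$ in $[y,x']$. My approach is iterative: if $z$ is already a cocover of $x$, take $x' = z$; otherwise, by the definition of cocover some $w$ satisfies $z < w < x$, and I replace $z$ by $w$ and repeat. Each such replacement strictly raises the length by the monotonicity discussed above while keeping it strictly below $\ell(x)$, so after at most $\ell(x) - \ell(z) - 1$ steps the procedure terminates at a cocover $x'$ of $x$ with $z \leq x' < x$.

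The main obstacle is this chain-refinement step. The iteration itself is elementary, but it depends crucially on the strict monotonicity of $\ell$ along the Bruhat order; once that standard consequence of the antisymmetry results of \cite{M, MO} is invoked, the induction closes without further difficulty and the corollary follows.
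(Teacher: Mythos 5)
Your proof is correct, but it is organized differently from the paper's. The paper argues directly: every $z \in [y,x]$ lies on a saturated chain from $y$ to $x$, such chains have at most $\ell(x)-\ell(y)$ steps, and each step admits only finitely many continuations because each element has finitely many covers; hence there are finitely many saturated chains and finitely many elements in the interval. You instead induct on $\ell(x)-\ell(y)$ and peel off the top element, writing $[y,x]$ as $\{x\}$ together with the union of the intervals $[y,x']$ over the cocovers $x'$ of $x$ lying above $y$. Two points of comparison are worth making. First, your version invokes only the finiteness of the set of cocovers of $x$, which is literally the preceding corollary, whereas the paper's proof uses finiteness of covers of every intermediate element (asserted in the introduction, but obtained in the body only for cocovers, with the cover case left to a symmetric argument). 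Second, you make explicit the refinement step --- every $z<x$ in the interval sits below some cocover of $x$ --- which the paper compresses into the assertion that every $z$ is part of a saturated chain; your termination argument for that refinement is sound, though the cleanest justification of the strict monotonicity $z<x\Rightarrow\ell(z)<\ell(x)$ is the Muthiah--Orr formula $\ell(s_\alpha x)=\ell(x)-|L_{x,\alpha}|$ together with $\alpha\in L_{x,\alpha}$, rather than the antisymmetry argument you sketch. Both proofs ultimately rest on the same two facts (bounded chain length and finite branching), so the difference is one of bookkeeping rather than substance.
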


\begin{proof}
Consider a path from $y$ to $x$. The path has at most $\ell(x) - \ell(y)$ steps. Assume the path is a saturated chain. Then there are finitely many options for each step (because there are finitely many covers for any element of $W$), so there are finitely many options for a saturated chain. And since every $z$ such that $y < z < x$ is part of a saturated chain, we see there are finitely many options for $z \in [y, x]$ and $[y, x]$ must be finite.
\end{proof}

\begin{ex}
Consider $W_{\aff}$ of type $\widetilde{A}_2,$ and let $x = X^{\alpha_1 + \alpha_2+\delta + \Lambda_0}Y^{\alpha_2}$ and $\alpha = \alpha_1 - 2\delta + \pi$ (the same choices from Example \ref{ex: 1}). Then $[s_{\alpha}x, x]$ contains 8 elements:
\begin{center}
\includegraphics[scale=.65]{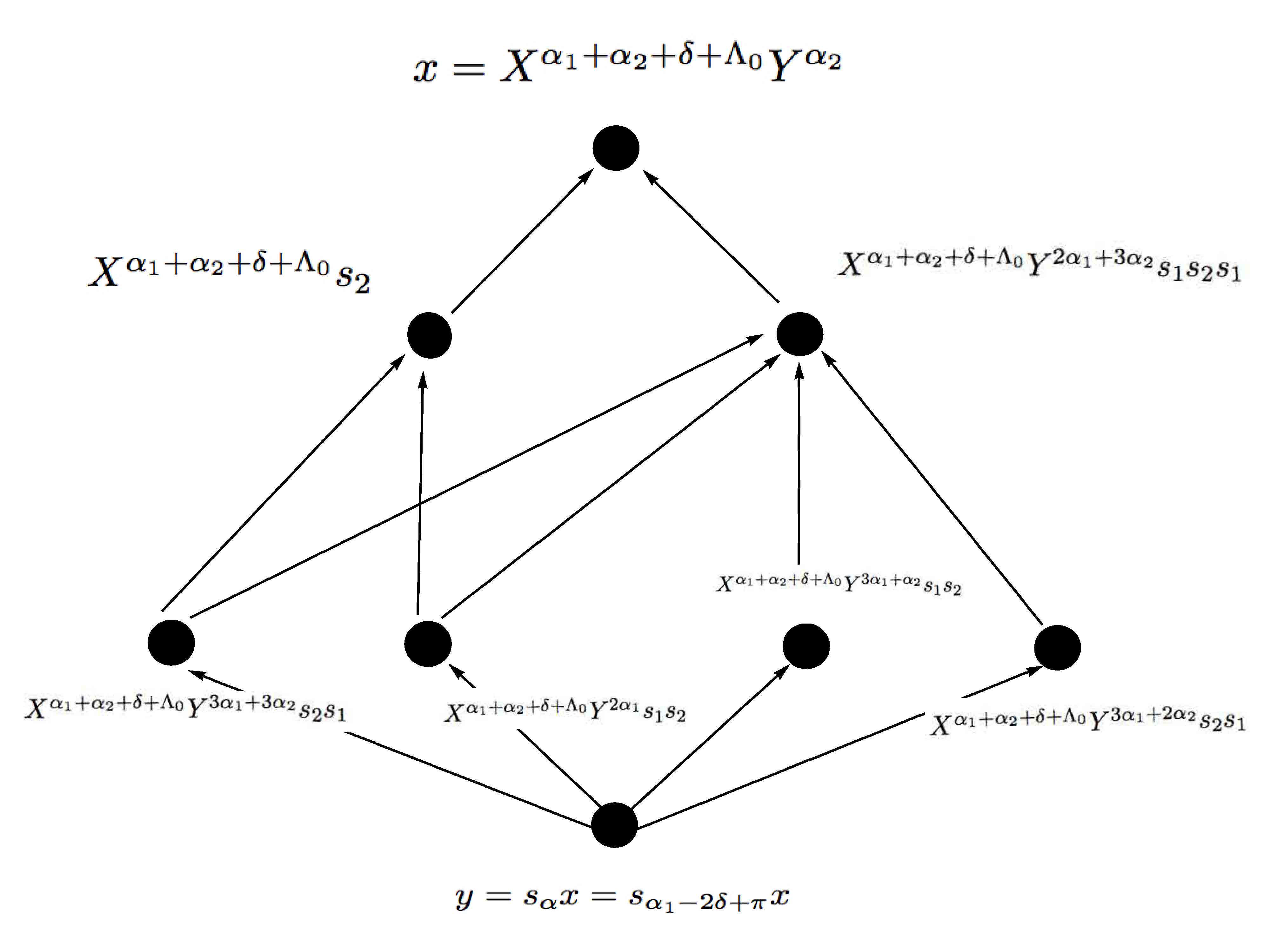}
\end{center}
\end{ex}

\begin{enumerate}
\item $x = X^{\alpha_1 + \alpha_2 + \delta +\Lambda_0}Y^{\alpha_2}$

\item $s_{\theta - 3\delta + \pi}x = X^{\alpha_1 + \alpha_2 + \delta + \Lambda_0}Y^{2\alpha_1 + 3\alpha_2}s_1s_2s_1$, a cocover of $x$

\item $s_{-\alpha_2 + \delta}x = X^{\alpha_1 + \alpha_2 + \delta +\Lambda_0}s_2$, a cocover of $x$

\item $s_{\alpha_1+\alpha_2 - 4\delta + 2\pi}s_{\alpha}x = X^{\alpha_1 + \alpha_2 + \delta +\Lambda_0}Y^{3\alpha_1 + \alpha_2}s_1s_2$, a cover of $s_{\alpha}x$

\item $s_{\theta - 3\delta + \pi} s_{\alpha}x = X^{\alpha_1 + \alpha_2 + \delta +\Lambda_0}Y^{2\alpha_1}s_1s_2$, a cover of $s_{\alpha}x$

\item $s_{-\alpha_2+\delta}s_{\alpha}x = X^{\alpha_1 + \alpha_2 + \delta +\Lambda_0}Y^{3\alpha_1 + 3\alpha_2}s_2s_1$, a cover of $s_{\alpha}x$

\item $s_{-\alpha_2 + \pi}s_{\alpha}x = X^{\alpha_1 + \alpha_2 + \delta +\Lambda_0}Y^{3\alpha_1 + 2\alpha_2}s_2s_1,$ a cover of $s_{\alpha}x$

\item $s_{\alpha}x = s_{\alpha_1 - 2\delta + \pi}x.$
\end{enumerate}

\begin{cor}\label{corner types}

Let $x = X^{\zeta} \tw$ with $\zeta \in \mathcal{T}$ and $\tw \in W_{\aff}$. If $\alpha = \nu + r\delta + j\pi$ corresponds to a corner of the graph $\Gamma_{x, \fin(\alpha)},$ then one of the following must hold:

\begin{enumerate}
\item $j = 0$
\item $j  =1$
\item $j = -\langle \zeta, \tal \rangle$
\item $j = -\langle \zeta, \tal \rangle - 1$.
\end{enumerate}

\end{cor}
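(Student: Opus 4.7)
The corollary reads off directly from the case analysis already performed in the proof that $\Gamma_{x,\nu}$ has finitely many corners, so the plan is to isolate the portion of that argument which pinned down the $j$-coordinate, and then observe that nothing else is needed.

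Specifically, I would restate the content of Case 5 from the previous proof as the main computation: assume for contradiction that $\alpha = \nu + r\delta + j\pi$ is a corner with $j \neq 0, 1, -\langle\zeta,\tal\rangle, -\langle\zeta,\tal\rangle - 1$. Since the remaining values lie strictly between the lower outer/inner edges and the upper inner/outer edges, we have
\[
1 < j < -\langle \zeta, \tal \rangle - 1,
\]
which forces $1 \leq j - 1 < j < j + 1 \leq -\langle \zeta, \tal \rangle - 1$. Using the description of $\Gamma_{x,\nu}$ from the earlier proposition (in particular that every integer $(r,k)$ with $0 < k < -\langle \zeta, \nu + r\delta \rangle$ lies in the graph), both $\beta_1 = \nu + r\delta + (j-1)\pi$ and $\beta_2 = \nu + r\delta + (j+1)\pi$ are in $\Gamma_{x,\nu}$.

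Then the key observation is that $(\beta_1)_\alpha^- = \nu + r\delta + (j+1)\pi = \beta_2$, which also lies in $\Gamma_{x,\nu}$. So $\beta_1$ witnesses the failure of the corner condition for $\alpha$, contradicting the hypothesis. Hence $j$ must be one of the four listed values.

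The argument is entirely a bookkeeping consequence of the geometric description of $\Gamma_{x,\nu}$; there is no real obstacle beyond making sure the strict inequalities on $j$ give enough room to place both $j-1$ and $j+1$ inside the admissible $j$-range for the given $r$. I would write this as a one-paragraph proof that cites Case 5 (or simply repeats the two-line vertical-neighbor argument) rather than re-deriving the full classification of edges.
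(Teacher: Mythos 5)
Your proposal is correct and is essentially the paper's own argument: the corollary is exactly the contrapositive of Case 5 in the proof that $\Gamma_{x,\nu}$ has finitely many corners, where one shows that for $1 < j < -\langle\zeta,\tal\rangle - 1$ both vertical neighbors $(r, j\pm 1)$ lie in the graph and are swapped by the $180^\circ$ rotation about $(r,j)$, so $\alpha$ cannot be a corner. Your verification that $(\beta_1)_\alpha^- = \beta_2$ and that the strict inequalities leave room for both neighbors is exactly the bookkeeping the paper relies on.
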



\subsection{Classification}

Using our new approach, we can bypass the bounds we needed on $\ell(\tw)$ and $ \ell(s_{\tv\tal}\tw)$ in Theorem \ref{thm: 1}, and we can reduce the bound we needed on $\langle \zeta, \alpha_i \rangle$.

\begin{thm}\label{thm: 2nd}
Let $x = X^{\tv \zeta} \tw$ and $y = s_{\alpha}x$ where $\alpha = -\tv \tal + j \pi$ is a positive double affine root and $\langle \zeta, \alpha_i \rangle > 2$ for $i = 0, 1, \dotsc, n$. Then $y$ is a cocover of $x$ if and only if one of the following holds: 

\begin{enumerate}
\item $j = 0$ and $\ell(\tv) = \ell(\tv s_{\tal}) + 1$.
\item $j  =1$ and $\ell(\tv) = \ell(\tv s_{\tal}) + 1 - \langle \tal, 2\rho \rangle$.
\item $j = \langle \zeta, \tal \rangle$ and $\ell(\tw^{-1} \tv s_{\tal}) = \ell(\tw^{-1} \tv) + 1.$
\item $j = \langle \zeta, \tal \rangle - 1$ and $\ell(\tw^{-1} \tv s_{\tal}) = \ell(\tw^{-1} \tv) + 1 - \langle \tal, 2\rho \rangle$.
\end{enumerate}
\end{thm}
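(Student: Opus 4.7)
The plan is to reduce this to a corner analysis combined with the length calculation from the proof of Theorem \ref{thm: 1}. Since $y$ is a cocover of $x$, by the discussion preceding Corollary \ref{corner types}, the root $\alpha = -\tv\tal + j\pi$ must correspond to a corner of the graph $\Gamma_{x,\fin(\alpha)}$. Corollary \ref{corner types} then forces $j \in \{0,\ 1,\ \langle \zeta, \tal\rangle,\ \langle \zeta, \tal\rangle - 1\}$. This is exactly what Lemma \ref{lem: 3} achieved in the first method, but now obtained from the geometric corner argument rather than the convexity estimate; crucially, the derivation required no hypothesis on $\ell(\tw)$ or $\ell(s_{\tv\tal}\tw)$. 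Hence the bounds on the affine lengths disappear.

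Next I would reproduce the two rewritings of $y$ from the proof of Lemma \ref{lem: 3}, namely
\[
y = X^{\tv s_{\tal}(\zeta - j \tal)}\, s_{\tv \tal}\, \tw = X^{\tv(\zeta - (\langle \zeta, \tal\rangle - j)\tal)}\, s_{\tv \tal}\, \tw.
\]
For $j \in \{0,1\}$ I would use the first form; for $j \in \{\langle \zeta,\tal\rangle,\langle \zeta,\tal\rangle - 1\}$ I would use the second. To apply Proposition \ref{prop: 6} to compute $\ell(y)$, I need the translation part to be dominant and regular. In both situations this reduces to checking that $\zeta - \tal$ is dominant and regular. Since $\Phi_\fin$ is irreducible and simply laced, $\langle \tal, \alpha_i\rangle \leq 2$ for every simple affine root $\alpha_i$, so $\langle \zeta - \tal, \alpha_i\rangle \geq \langle \zeta, \alpha_i\rangle - 2 > 0$ by the hypothesis $\langle \zeta, \alpha_i\rangle > 2$. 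This is precisely where the sharper bound is used and replaces the inequality $\langle \zeta,\alpha_i\rangle \geq 2(M+1)$ from Theorem \ref{thm: 1}.

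With these dominance and regularity facts in hand, the length calculation is identical to that in the proof of Theorem \ref{thm: 1}: Proposition \ref{prop: 6} gives
\[
\ell(y) = \langle \zeta - j\tal, 2\rho\rangle - \ell(\tv^{-1}\tw) + \ell(\tv s_{\tal})
\]
for $j \in \{0,1\}$, and an analogous identity (with $\ell(\tw^{-1}\tv s_{\tal})$ in place of $\ell(\tv s_{\tal})$) for $j \in \{\langle \zeta,\tal\rangle, \langle \zeta,\tal\rangle - 1\}$. Subtracting $\ell(x) = \langle \zeta, 2\rho\rangle - \ell(\tv^{-1}\tw) + \ell(\tv)$ and imposing $\ell(x) - \ell(y) = 1$ yields the four length equalities listed in the theorem, after invoking \cite[Prop 6.5]{MM} to conclude that $j$ must take the specific value in each case.

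The reverse direction is the same computation reversed: each of the four length conditions forces $\ell(x) - \ell(y) = 1$, which by the Muthiah–Orr characterization \cite[Thm 1.6]{MO} means $y$ is a cocover of $x$. The main obstacle is really just organizational, ensuring the geometric corner argument is correctly cited to eliminate the length bounds; the calculation that follows is routine once we know $\zeta - \tal$ is regular dominant, and that single dominance check is the one place the hypothesis $\langle \zeta, \alpha_i\rangle > 2$ is invoked.
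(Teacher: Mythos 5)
Your proposal is correct and follows essentially the same route as the paper: use Corollary \ref{corner types} to pin $j$ to the four values, verify $\zeta - \tal$ is dominant and regular from $\langle \zeta, \alpha_i\rangle > 2$ and $\langle \tal, \alpha_i\rangle \leq 2$, and then compute $\ell(x) - \ell(y)$ in each case via Proposition \ref{prop: 6}. The only cosmetic difference is your appeal to \cite[Prop 6.5]{MM}, which is not needed here since the corner argument already fixes $j$ and each case reduces to a direct length computation.
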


\begin{proof}

Following the proof to Theorem \ref{thm: 1} we write $y$ in two different forms. 
\begin{align*}
y &= s_{\alpha}x\\
&=X^{\tv s_{\tal} (\zeta - j \tal)} s_{\tv \tal}\tw \\
&=X^{\tv (\zeta - (\langle \zeta,  \tal \rangle - j)  \tal)} s_{\tv \tal} \tw
\end{align*}

Using Proposition \ref{prop: 6} and the fact that $\zeta$ is dominant and regular, we have 
$$\ell(x) = \langle \zeta, 2\rho \rangle - \ell(\tw^{-1}\tv) + \ell(\tv).$$

If $y$ is a cocover of $x$, then $\alpha$ is a corner relative to $x$, and by Corollary \ref{corner types}, there are four possibilities for $j$:

\begin{enumerate}
\item $j = 0$ and $y = X^{\tv s_{\tal} \zeta} s_{\tv \tal} \tw$
\item $j  =1$ and $y = X^{\tv s_{\tal} (\zeta - \tal)} s_{\tv \tal} \tw$
\item $j = - \langle \tv \zeta, -\tv \tal \rangle = \langle \zeta, \tal \rangle$ and $y = X^{\tv \zeta} s_{\tv \tal} \tw$
\item $j = - \langle \tv \zeta, -\tv \tal \rangle  - 1 = \langle \zeta, \tal \rangle - 1$ and $y = X^{\tv (\zeta - \tal)} s_{\tv \tal} \tw.$
\end{enumerate}

So no matter which direction we are proving, we may reduce to these four cases. Using $\langle \tal, \tbe \rangle \leq 2$ for all $\tal, \tbe \in \Phi_{\aff}$ \cite[VI 1.3]{B} and the assumption that $\langle \zeta, \alpha_i \rangle > 2$ for $i = 0, 1, \dotsc, n$, we have that $\zeta - \tal$ is dominant and regular.

 Case (1): Let $j = 0$. Then $y = X^{\tv s_{\tal} \zeta} s_{\tv \tal} \tw$, and by using Proposition \ref{prop: 6} we have 
\begin{align*}
\ell(y) = & \langle \zeta, 2\rho \rangle - \ell( \tw^{-1} s_{\tv \tal} \tv s_{\tal}) + \ell(\tv s_{\tal})\\
= & \langle \zeta, 2\rho \rangle - \ell( \tw^{-1} \tv s_{\tal} \tv^{-1} \tv s_{\tal}) + \ell(\tv s_{\tal})\\
= & \langle \zeta, 2\rho \rangle - \ell( \tw^{-1} \tv) + \ell(\tv s_{\tal}).
\end{align*}

So $\ell(x) - \ell(y) = \ell(\tv) - \ell(\tv s_{\tal}),$ and $y$ is a cocover of $x$ if and only if $\ell(\tv) - \ell(\tv s_{\tal}) = 1$.\\

Case (2): Let $j = 1$. Then $y = X^{\tv s_{\tal} (\zeta - \tal)} s_{\tv \tal} \tw$, and by using Proposition \ref{prop: 6} we have  
\begin{align*}
\ell(y) = & \langle \zeta - \tal, 2\rho \rangle - \ell( \tw^{-1} s_{\tv \tal} \tv s_{\tal}) + \ell(\tv s_{\tal})\\
= & \langle \zeta, 2\rho \rangle - \langle \tal, 2\rho \rangle - \ell( \tw^{-1} \tv s_{\tal} \tv^{-1} \tv s_{\tal}) + \ell(\tv s_{\tal})\\
= & \langle \zeta, 2\rho \rangle - \langle \tal, 2\rho \rangle - \ell( \tw^{-1} \tv) + \ell(\tv s_{\tal}).
\end{align*}

So $\ell(x) - \ell(y) = \ell(\tv) - \ell(\tv s_{\tal}) +  \langle \tal, 2\rho \rangle,$ and $y$ is a cocover of $x$ if and only if $\ell(\tv) - \ell(\tv s_{\tal})  + \langle \tal, 2\rho \rangle= 1$.\\

 Case (3): Let $j = \langle \zeta, \tal \rangle$. Then $y = X^{\tv \zeta} s_{\tv \tal} \tw$, and by using Proposition \ref{prop: 6} we have 
\begin{align*}
\ell(y) & =  \langle \zeta, 2\rho \rangle - \ell(\tw^{-1} s_{\tv \tal} \tv) + \ell(\tv)\\
& = \langle \zeta, 2\rho \rangle - \ell(\tw^{-1} \tv s_{\tal}) + \ell(\tv).
\end{align*}

So $\ell(x) - \ell(y) = \ell(\tw^{-1} \tv s_{\tal}) - \ell(\tw^{-1} \tv),$ and $y$ is a cocover of $x$ if and only if $\ell(\tw^{-1} \tv s_{\tal}) - \ell(\tw^{-1} \tv) = 1$.\\

 Case (4): Let $j = \langle \zeta, \tal \rangle - 1$. Then $y = X^{\tv (\zeta - \tal)} s_{\tv \tal} \tw$, and by using Proposition \ref{prop: 6} we have 
\begin{align*}
\ell(y) & =  \langle \zeta, 2\rho \rangle - \langle \tal, 2\rho \rangle - \ell(\tw^{-1} s_{\tv \tal} \tv) + \ell(\tv)\\
& = \langle \zeta, 2\rho \rangle - \langle \tal, 2\rho \rangle - \ell(\tw^{-1} \tv s_{\tal}) + \ell(\tv).
\end{align*}

So $\ell(x) - \ell(y) = \ell(\tw^{-1} \tv s_{\tal}) - \ell(\tw^{-1} \tv) + \langle \tal, 2\rho \rangle,$ and $y$ is a cocover of $x$ if and only if $\ell(\tw^{-1} \tv s_{\tal}) - \ell(\tw^{-1} \tv) + \langle \tal, 2\rho \rangle= 1$.
\end{proof}

\subsection*{Further Work}

The next step in examining the Bruhat intervals is to determine if our classification will work when $\Phi_\fin$ is not simply laced. We expect that a similar classification exists, and that our method of using graphs and the length difference set can be applied. The first step in extending our work is to determine if it still holds that a length difference of one classifies a cocover relationship in the case where $\Phi_{\fin}$ is not simply laced. 

\subsection*{Acknowledgements}

This work was part of the author's doctoral dissertation at Virginia Polytechnic Institute and State University. The author would like to thank her advisor, Daniel Orr, for his help on this project. We also thank Dinakar Muthiah for suggesting the approach we used based on corners to classifying covers in the double affine Bruhat order.

\end{document}